\newtheorem{theorem}{Theorem}
\newtheorem{definition}{Definition}
\newtheorem{lemma}{Lemma}
\newtheorem{proposition}[theorem]{Proposition}
\newtheorem{remark}{Remark}
 \newtheorem*{theorem*}{Rough version of the Main theorem}
\let\p=\partial
\let\O=\Omega
\numberwithin{equation}{section}
\let\hide\iffalse
\let\unhide\fi
\DeclareMathAlphabet{\mathpzc}{OT1}{pzc}{m}{it}
\newcommand{\R}{\mathbb{R}}
\newcommand{\be}{\begin{equation}}
\newcommand{\bm}{\begin{multline}}
\newcommand{\ee}{\end{equation}}
\newcommand{\dd}{\mathrm{d}}
\newcommand{\xb}{x_{\mathbf{b}}}
\newcommand{\tb}{t_{\mathbf{b}}}
\newcommand{\xf}{x_{\mathbf{f}}}
\newcommand{\tf}{t_{\mathbf{f}}}
\newcommand{\Bes}{\begin{eqnarray*}}
\newcommand{\Ees}{\end{eqnarray*}}
\newcommand{\Be}{\begin{equation} }
\newcommand{\Ee}{\end{equation}}
\newcommand{\Bs}{\begin{split}}   
\newcommand{\vertiii}[1]{{\left\vert\kern-0.25ex\left\vert\kern-0.25ex\left\vert #1 
    \right\vert\kern-0.25ex\right\vert\kern-0.25ex\right\vert}}
\def\munderbar#1{\underline{\sbox\tw@{$#1$}\dp\tw@\z@\box\tw@}}
\def\p{\partial}
\def\O{\Omega}
\def\R{\mathbb{R}}
\def\B{\begin{equation}}
\def\E{\end{equation}}
\def\BN{\begin{eqnarray*}}
\def\EN{\end{eqnarray*}}
\def\bcb{\begin{color}{blue}}
\def\ec{\end{color}}
\def\bcr{\begin{color}{red}}
\def\ec{\end{color}}
\begin{document}

\title{Damping of kinetic transport equation with diffuse boundary condition}

 \author{Jiaxin Jin}
 \address{Department of Mathematics, University of Wisconsin-Madison, Madison, WI, 53706, USA, email: jjin43@wisc.edu}
 \author{Chanwoo Kim}
 \address{Department of Mathematics, University of Wisconsin-Madison, Madison, WI, 53706, USA, email: ckim.pde@gmail.com; chanwoo.kim@wisc.edu}
 
\date{\today}
\maketitle

\begin{abstract}
We prove that exponential moments of a fluctuation of the pure transport equation decay \textit{pointwisely} almost as fast as $t^{-3}$ when the domain is any general strictly convex subset of $\R^3$ with the smooth boundary of the diffuse boundary condition. We prove the theorem by establishing a novel $L^1$-$L^\infty$ framework via stochastic cycles.
\end{abstract}


\section{Introduction and the result of this paper}

An important and active research direction in the mathematical kinetic theory is on the asymptotic behavior of its solutions as $t\rightarrow \infty$ for both the collisional models (e.g. \cite{CKL,DV,Guo10,Mouhot,K_CPDE,SG}) and the collisionless models (e.g. \cite{BMM,Landau, MW, MV}). In this paper, we are interested in a mixing effect of stochastic boundary damping the moments of fluctuation for a simple collisionless model. More precisely, we consider a \textit{free transport} equation in a bounded domain $\O \subset\R^3$, with an initial condition 
$F(t,x, v) |_{t = 0}  = F_0 (x, v)$, 
\begin{equation} \label{equation for F} 
\partial_t F + v \cdot \nabla_{x} F = 0,  \ \  \text{for} \  (t, x, v) \in \mathbb{R}_{+} \times \Omega \times \mathbb{R}^3. 
\end{equation} 
Throughout this paper, we assume the domain is \textit{smooth} and \textit{strictly convex}: there exists a smooth function 
$\xi: \R^3 \rightarrow \R$ such that $\O= \{x \in \R^3: \xi(x) <0\}$ and 
$\sum_{i,j} \p_{i}\p_j \xi(x) \zeta_i \zeta_j \gtrsim |\zeta|^2$ for all $\zeta \in \R^3$ (\cite{GKTT1}). The phase boundary $\gamma:=  \{ (x, v) \in \partial \Omega \times \mathbb{R}^3\}$ is decomposed into the outgoing boundary and incoming boundary $ \gamma_{\pm} := \{ (x, v) \in \partial \Omega \times \mathbb{R}^3, n(x) \cdot v \gtrless 0 \}$ with the outward normal $n(x)$ at $x \in \p\O$.
We consider an \textit{isothermal diffusive reflection} boundary condition which is the simplest model among the family of stochastic boundary conditions (see \cite{EGKM,CKL2} for the general boundary conditions)
\begin{equation} \label{diff_F}
F(t, x, v)  
=c_\mu \mu(v) \int_{n(x)\cdot v_1>0} F(t, x, v_1) \{ n(x) \cdot v_1 \} \dd v_1, \ \ \text{for} \ (t,x,v) \in \R_+ \times \gamma_-.
\end{equation} 
Here, for $c_\mu = \sqrt{2 \pi}$, $c_\mu\mu(v) = c_\mu  \frac{1}{(2 \pi)^{3/2}}\exp\{-   {|v|^2} / {2}\}$ 
stands for the wall Maxwellian distribution of the unit wall temperature. At the molecule level, the boundary condition \eqref{diff_F} corresponds to the Markov process at the boundary (\cite{Spohn}). 
We set the total mass of the initial datum to be $\mathfrak{M} \times |\O|$, for some $\mathfrak{M} \geq0$:
\begin{equation} \label{cons_m}
  \iint_{\Omega \times \mathbb{R}^3} F_0 ( x, v) \dd x \dd v= \iint_{\Omega \times \mathbb{R}^3} \mathfrak{M}\mu (v) \dd x \dd v.
\end{equation} 
The choice of $c_\mu= \sqrt{2 \pi}$ formally guarantees a null flux condition at the boundary and the conservation of mass. We are interested in a long time behavior of the fluctuation of $F$ around the equilibrium $\mathfrak{M}\mu (v)$: 
\Be f(t, x, v)=
F(t, x, v) - \mathfrak{M}\mu (v) \ \ \text{where} \ \ 
\mu (v)= 
 \frac{1}{(2 \pi)^{3/2}}e^{-   \frac{|v|^2}{2}}
.\label{pert_f}
\Ee

 Due to its conceptual importance and applications, the mixing effect of the stochastic boundary has been studied in various aspects of the Boltzmann equation. In \cite{Guo10}, Guo establishes a novel $L^2$-$L^\infty$ framework to control an $L^\infty_x$-norm of the Boltzmann equation for all basic boundary conditions (e.g. diffuse reflection, specular reflection, inflow, and bounce-back conditions). In this framework of \cite{Guo10}, an $L^\infty_x$-norm can be controlled directly along the generalized characteristics corresponding to the boundary condition, the bouncing billiard trajectories with stochastic boundary in the case of \eqref{diff_F}, without any differentiability assumption. In \cite{Kim}, Kim constructs initial data of the Boltzmann equation inducing the formation of singularity at the boundary and proves the propagation of such singularity along with the generalized characteristics. In \cite{EGKM}, Esposito-Guo-Kim-Marra construct the stationary solutions of the Boltzmann equation when the boundary temperature can be non-constant. In fact, these solutions are non-equilibrium stationary states since they are not local Maxwellians. They also prove exponentially-fast asymptotical stability of such stationary states under small perturbations in $L^\infty_x$ (\cite{EGKM}). In \cite{CKL,CKL2}, Kim et al. construct strong solutions of Vlasov-Poisson-Boltzmann systems in convex domains with the diffuse reflection boundary and prove exponentially-fast asymptotical stability. One of the keys in their proof is strong control (in some H\"older space) of the electrostatic force field and bouncing characteristics curves with stochastic boundary. We also refer to \cite{Yu} for a probabilistic approach of the boundary problem of a diffuse reflection boundary condition in 1D. 
  
Damping induced solely by the mixing effect of the stochastic boundary is a primary subject of this paper. It is a different mechanism of the phase mixing \textit{without the Boltzmann collision effect}. Perhaps, the most famous result of the phase mixing is the Landau damping, which generally refers to the decay of the moments of the fluctuation or electrostatic force field for the Vlasov-Poisson system without the boundary (\cite{Landau,MW}). Mathematical justification of the nonlinear Landau damping has been a longstanding open question, which is recently settled in the affirmative by Mouhot-Villani in \cite{MV} for the real analytic fluctuation around spatially homogeneous equilibriums (also see \cite{BMM} for the fluctuation in some Gevrey space). On the other hand, the nonlinear Landau damping around spatially inhomogeneous equilibriums is a challenging open problem. We refer to \cite{GZ} for the existence of spatially inhomogeneous steady states which are linearly stable.

Perhaps, the first quantitative study on the asymptotic behavior of the fluctuation can be found in \cite{Yu}, in which Yu proves a decay rate of moments of the fluctuation in $L^\infty$ when the boundary is a 1D slab using a probabilistic approach (of Markov chains of i.i.d. random variables). This approach has been successfully generalized to the multi-D cases of \textit{symmetric} domains (a disk in 2D and a ball in 3D) in \cite{KLT}, in which they obtain an optimal decay rate $t^{-D}$. The \textit{symmetric} assumption of the domains is essential in their proof. Under this condition, the bouncing characteristics can be formed by the independent and identically distributed (i.i.d.) random variables. Moreover, the derivatives of outgoing flux can be bounded with the symmetric condition. In general, such derivatives could blow up in general convex domains (\cite{GKTT1, GKTT2}) and non-convex domains (\cite{Kim}). We also refer to \cite{AG,MS} for the studies on the decay of the fluctuation in $L^1_{x,v}$ when the domains have some symmetry. Recently, there is a very interesting development of the subject toward removing the symmetric assumption (we refer to \cite{Bernou, Lods} for a more complete list of references). In \cite{Bernou}, Bernou develops a method based on Harris' Theorem which is particularly well-suited for problems arising in $L^1$-type of spaces. The work of \cite{Bernou} inspires our work, in particular, at the proof of Proposition \ref{theorem:1}. In \cite{Lods1, Lods}, Lods and Mokhtar-Kharroubi develop a different spectral approach using the Tauberian argument. All works \cite{Bernou, Lods1, Lods} address an asymptotic behavior of the fluctuation itself in some $L^1_{x,v}$-type spaces. 
 
Motivated by the recent progress in the Landau damping (\cite{BMM,GZ,MV}), we are mainly interested in the quantitative asymptotic behavior of the exponential moments of the fluctuation
\Be\notag
  \int_{\R^3} e^{\theta |v|^2} |f(t,x,v)| \dd v  \ \ \ \text{in some } strong \text{ space in $x$ without any differentiability assumption.}  
\Ee 
We emphasize that the strong-in-$x$ control of moments is a key step toward nonlinear problems such as the Vlasov-Poisson systems. The low regularity framework has a significant benefit in the nonlinear boundary problems. We refer to \cite{CKL,CKL2} for the method of control the force field of the Vlasov-Poisson-Boltzmann systems interacting with the diffuse reflection boundary. In this paper, we contribute toward establishing a decay of \textit{exponential moments} of the fluctuation in $L^\infty_x$ with \textit{an almost optimal rate} $\frac{1}{t^{3-}}$ when the domain is a \textit{general strictly convex domain} in 3D.  
\begin{theorem}\label{theorem}
Let $\O$ be smooth and strictly convex. Assume \eqref{cons_m} for any $\mathfrak{M}\geq0$. Assume $\| e^{\theta^\prime |v|^2} f_0\|_{L^\infty_{x,v}}<\infty$ for $0<\theta^\prime<1/2$, and $\| \varphi_4 (\tf) f_0 \|_{L^1_{x,v} }<\infty$, with $\varphi_4 (\tf)$ defined in Definition \ref{def:varphis}. There exists a unique solution $F(t,x,v)= \mathfrak{M} \mu(v) + f(t,x,v)\geq 0$ to \eqref{equation for F} and \eqref{diff_F}, such that $\sup_{t\geq0}\| e^{\theta^\prime |v|^2} f (t)\|_{L^\infty_{x,v}}\leq C \| e^{\theta^\prime |v|^2} f_0\|_{L^\infty_{x,v}}$, and  
\Be
\iint_{\Omega \times \mathbb{R}^3} f (t, x, v) \dd x \dd v 
 = \iint_{\Omega \times \mathbb{R}^3} f_0 ( x, v) \dd x \dd v = 0, \ \ \text{for all } t\geq 0. \label{cons_mass_f} 
\Ee
Moreover, for any $\theta \in [0, \theta^\prime)$, there exists $C_\theta>0$ such that 
\Be
\sup_{x \in \bar{\O}}\int_{\R^3} e^{\theta  |v|^2} |f(t,x,v) |\dd v  \leq C_{\theta}   \langle t\rangle^{ -3}(\ln  \langle t\rangle)^{2},  \ \ \text{for all }  t \geq 0. 
\label{theorem_infty} 
\Ee
Here, we have used a notation $\langle  \cdot \rangle:=  e+ |\cdot|$. 
\end{theorem}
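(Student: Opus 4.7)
My plan is to combine a stochastic-cycle Duhamel expansion with the $L^1$-type decay of Proposition~\ref{theorem:1} into an $L^1$-$L^\infty$ bootstrap, in the spirit of Guo's $L^2$-$L^\infty$ framework adapted to the purely kinetic (collisionless) setting. To set up, I would construct, at any base point $(t,x,v)$, the backward stochastic cycle $t_0=t$, $(x_0,v_0)=(x,v)$, $t_{j+1}=t_j-\tb(x_j,v_j)$, $x_{j+1}=\xb(x_j,v_j)\in\p\O$, with $v_{j+1}$ drawn from the outgoing half-space against the density $c_\mu\mu(v_{j+1})\,n(x_{j+1})\cdot v_{j+1}$. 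Unfolding the Duhamel formula $k$ times along this cycle expresses $f(t,x,v)$ as a Markov-chain expectation of $f_0$ evaluated at the trace-back endpoint, plus a remainder supported on the event that all $k$ bounces occurred inside $(0,t]$. Because $c_\mu$ normalizes each diffuse reflection into a probability kernel, this remainder is controlled by $\|e^{\theta'|v|^2}f_0\|_{L^\infty_{x,v}}$ times a factor that, after excising grazing and large-$|v|$ cells using the Gaussian weight, decays geometrically in $k$. This yields existence, uniqueness, the mass conservation \eqref{cons_mass_f} (via the null-flux identity built into $c_\mu$), and the uniform weighted bound $\sup_t\|e^{\theta'|v|^2}f(t)\|_{L^\infty_{x,v}}\lesssim\|e^{\theta'|v|^2}f_0\|_{L^\infty_{x,v}}$.

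For the quantitative estimate \eqref{theorem_infty}, I would fix $t$ large, choose the depth $k\asymp\ln\langle t\rangle$, and decompose the cycle representation of $f(t,x,v)$ into three pieces: (i) trajectories whose $k$-th trace-back has already reached time $\leq 0$ and hence land on the initial datum $f_0$; (ii) surviving trajectories on the ``good'' event that every $|v_j|\lesssim\sqrt{\ln\langle t\rangle}$ and every $|n(x_j)\cdot v_j|$ stays above a small positive threshold; (iii) the complementary ``bad'' event. The bad piece is bounded by the $L^\infty$ estimate of Step~1 times the probability of a grazing or large-velocity draw; the uniform Hessian bound on $\xi$ coming from strict convexity forces this probability to decay geometrically in $k$, so with $k\asymp\ln\langle t\rangle$ this contribution is $O(\langle t\rangle^{-N})$ for any $N$. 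The good piece is rewritten, after the surface-to-bulk change of variables in $(v_1,\dots,v_k)$, as an average of $|f(t_k,\cdot,\cdot)|$ against a smooth density on $\O\times\R^3$ at an intermediate time $t_k\in(0,t-c\ln\langle t\rangle)$; Proposition~\ref{theorem:1} then supplies an almost $\langle t_k\rangle^{-3}$ decay of $\|f(t_k)\|_{L^1_{x,v}}$ that passes through the $v$-integration on the left of \eqref{theorem_infty}, with the weight $e^{\theta|v|^2}$ absorbed by the final-bounce Maxwellian $\mu(v)$ because $\theta<\theta'<1/2$. Piece~(i) is handled by the classical free-transport dispersive estimate $|\{v:x-(t-t_k)v\in\O\}|\lesssim\langle t-t_k\rangle^{-3}$ together with the $\varphi_4(\tf)$-weighted $L^1_{x,v}$ hypothesis on $f_0$, which is precisely the norm that survives the cycle iteration and reproduces the $\langle t\rangle^{-3}$ rate.

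The main obstacle is executing the good-event estimate uniformly in $(x,v)$ on a \emph{general}, possibly non-symmetric, strictly convex domain, since the bounces $v_1,v_2,\dots$ are not i.i.d.\ as in the symmetric setting of \cite{KLT}. What is needed is quantitative non-degeneracy of the cycle map: a uniform lower bound on $\tb$ away from a measure-zero grazing set, control of the Jacobian of $(v_1,\dots,v_k)\mapsto(x_k,v_k)$, and a substitute for the lost i.i.d.\ structure. All three are driven by the uniform Hessian bound $\sum\p_i\p_j\xi\,\zeta_i\zeta_j\gtrsim|\zeta|^2$, and the substitute for identical distribution is provided by the Harris-type minorization underlying Proposition~\ref{theorem:1}, imported from \cite{Bernou}. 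Within this scheme the $(\ln\langle t\rangle)^2$ loss in \eqref{theorem_infty} is unavoidable: one logarithm comes from the iteration depth $k\asymp\ln\langle t\rangle$, and the second from the Gaussian truncation $|v|^2\lesssim\ln\langle t\rangle$ that converts the weighted $L^\infty$ bound into an $L^1_v$-integrable bound via the gap $\theta'-\theta>0$.
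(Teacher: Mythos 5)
Your overall architecture (stochastic cycles plus an $L^1$-to-$L^\infty$ bootstrap through Proposition \ref{theorem:1}) is the right one, but the core quantitative step has a genuine gap: the choice $k\asymp\ln\langle t\rangle$ cannot work, and the smallness mechanism you invoke for it is not valid. The probability that \emph{at least one} of $k$ draws is grazing or has large velocity does not ``decay geometrically in $k$''; a union bound makes it \emph{grow} linearly in $k$. What is actually small is the probability that a trajectory survives $k$ bounces inside $(0,t]$, and this is small only because surviving forces a large \emph{fraction} of the $k$ draws to be grazing --- which requires $k\gtrsim t$, not $k\sim\ln\langle t\rangle$ (this is exactly Lemma \ref{lem:small_largek}, whose hypothesis is $k\geq\mathfrak{C}t$). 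With $k\asymp\ln\langle t\rangle$ the terminal term $\mathbf{1}_{t_k\geq 0}\,g(t_k,x_k,v_k)$ is present with probability close to $1$ for order-one velocities, and your plan to absorb it as ``an average of $|f(t_k,\cdot,\cdot)|$ against a smooth density'' does not go through as stated: $f(t_k,x_k,v_k)$ is evaluated at a boundary point at a random time $t_k$ close to $t$, and converting the velocity integrals $\dd\sigma_{k-1}\dd\sigma_k$ into an interior $L^1_{x,v}$ average via the map $v_j\mapsto(\xb,\tb)$ produces a Jacobian singularity $|\tb|^{-4}$ that is only tamed when an independent Duhamel time integration is available. The paper creates that structure by weighting $f$ with $\varrho(t)=(\ln\langle t\rangle)^{-2}\langle t\rangle^{5}$, so that the cycle expansion produces source terms $\int\varrho'(s)f(s)\,\dd s$ which Lemma \ref{lem:bound1} (a \emph{double} change of variables to cancel the singularity) converts into $\int_0^t\|\varrho'(s)f(s)\|_{L^1_{x,v}}\dd s\lesssim t$; the terminal term is then killed only by taking $k\sim t$ and paying the factor $k$ in front of the sum, which is precisely why the final rate is $k\,t/\varrho(t)\sim\langle t\rangle^{-3}(\ln\langle t\rangle)^{2}$. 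Your proposal contains neither the $\varrho$-weighting nor the double change of variables, and these are the essential ingredients rather than technical conveniences.

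Two smaller but substantive inaccuracies: Proposition \ref{theorem:1} gives $\|f(t)\|_{L^1_{x,v}}\lesssim(\ln\langle t\rangle)^{2}\langle t\rangle^{-4}$, not $\langle t\rangle^{-3}$; and the $(\ln\langle t\rangle)^{2}$ in \eqref{theorem_infty} does not arise from an iteration depth $\ln\langle t\rangle$ or from a Gaussian truncation $|v|^{2}\lesssim\ln\langle t\rangle$ --- it is inherited directly from the logarithm in the $L^1$ decay rate (itself produced by the interpolation between $\varphi_1$ and $\varphi_4$ and the optimization in $M$), transmitted through the choice of $\varrho$. The velocity weight $e^{\theta|v|^2}$ costs no logarithm at all: it is absorbed by $\mu(v)$ in the cycle measure since $\theta<\theta'<1/2$.
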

\begin{remark}
In contrast to \cite{KLT}, we do not need any symmetric condition on the domain. 
\end{remark}
\begin{remark}
Without loss of generality, we set $\mathfrak{M}=1$ in the rest of the paper, for the sake of simplicity. Following the same proof of this paper, it is straightforward to prove the result to a $D$-dimension for any $D \in \mathbb{N}$ with different decay rates.  
\end{remark}
\hide
\textcolor{red}{
\begin{remark} 
Although in the proofs we use the strictly convexity of the domain in many places, we believe that this assumption can be removed using more delicate study on the trajectory. In this paper we adopt the assumption to present our idea in a simpler manner. 
\end{remark}
}
\unhide

We record the equation, initial datum, and the boundary condition for the fluctuation $f$ in \eqref{pert_f}:  
\begin{align} 
\partial_t f + v \cdot \nabla_{x} f = 0,& \ \  \text{for} \  \    (t, x, v) \in \mathbb{R}_{+} \times \Omega \times \mathbb{R}^3,  \label{eqtn_f} \\
 f (t,x,v) |_{t = 0}  = f_0(x,v):= F_0 (x, v) - \mathfrak{M} \mu (v),&  \ \  \text{for} \   \   (x, v) \in \Omega \times \R^3,
\label{init_f}
 \\
 f (t, x, v)  = c_\mu\mu(v)  \int_{n(x) \cdot v_1>0} f(t, x, v_1) \{ n(x) \cdot v_1 \} \dd v_1 ,& \ \  \text{for} \  \   (t,x, v) \in \mathbb{R}_{+} \times \gamma_-. \label{diff_f} 
\end{align}

\noindent \textbf{Notations.} We shall clarify some notations: $A \lesssim_\theta B$ if $A\leq C B$ for a constant $C=C(\theta)>0$ which depends on $\theta$ but is independent on $A,B$; $A \sim B$ if $A \lesssim B$ and $B \lesssim A$;
 $A \leq O(B)$ if $|A| \lesssim B$; 
$\| \cdot \|_{L^1_{x,v}}$  for the norm of $L^1(\O \times \R^3)$; $\| \cdot \|_{L^\infty_{x,v}}$ or $\| \cdot \|_\infty$  for the norm of $L^\infty(\bar{\O} \times \R^3)$; $|g|_{L^1_{\gamma_\pm}}=\int_{\gamma_\pm} |g(x,v) | |n(x) \cdot v| \dd S_x \dd v$; an integration $\int_Y f(y)\dd y$ is often abbreviated to $\int_Y f$, if it is not ambiguous. 

\subsection{Novel $L^1$-$L^\infty$ framework via Stochastic Cycles}
In a broad sense, our argument of the $L^1$-$L^\infty$ framework to prove Theorem \ref{theorem} bears some resemblance to the framework developed in the study of the Boltzmann equation \cite{CKL,Guo10,EGKM}. A foundational idea of our novel $L^1$-$L^\infty$ framework over the whole paper is to \textit{transfer a velocity mixing from the diffusive reflection (\ref{diff_f}) to a spatial mixing through the transport operator}. This idea is realized via \textit{the stochastic cycles}:
\begin{definition}[\cite{CKL,EGKM,Guo10}]\label{def_cycles}Define the backward exit time $t_{\mathbf{b}}$ and the forward exit time $\tf$
\begin{equation}\label{def_tb} 
    t_{\mathbf{b}}(x, v) := \sup \{s \geq 0: x - \tau v \in \Omega, \  \forall\tau \in [0, s) \},   \ x_{\mathbf{b}}(x, v) := x - t_{\mathbf{b}}(x, v) v, \ (\tf,\xf)(x,v) := (\tb,\xb)(x,-v). 
\end{equation}
We define the stochastic cycles: $t_1 (t, x, v) = t - t_{\mathbf{b}}(x, v)$, $x_1 (x, v) = x_{\mathbf{b}}(x, v) := x - t_{\mathbf{b}}(x, v) v,$
\begin{equation} \label{def:t_k}
    t_k (t, x, v, v_1,..., v_{k-1}) = t_{k-1}
    - t_{\mathbf{b}}(x_{k-1}, v_{k-1}),
 \ \  x_k (t, x, v, v_1,..., v_{k-1}) = x_{k-1} 
    - t_{\mathbf{b}}(x_{k-1}, v_{k-1}) v_{k-1}, 
\end{equation}
where a free variable $v_j \in  \mathcal{V}_j  := \{v_j \in \mathbb{R}^3: n(x_j) \cdot v_j > 0 \}$.
\end{definition}

\begin{lemma}[\cite{CKL,EGKM,Guo10}]
\label{sto_cycle}
 Suppose $f$ solve (\ref{eqtn_f}), (\ref{diff_f}) and $t_* \leq t$. For $g(t, x, v) := \varrho(t)w (v) f (t, x, v)$ with given $\varrho(t)$, $w(v)$,  
\begin{align}
    g (t, x, v) 
   & = \mathbf{1}_{t_1 \leq t_*}
    g (t_*, x -  (t-t_*)   v, v)
     \label{expand_h1}
    \\& + \int^t_{\max(t_*, t_{1})} \varrho^\prime(s) w(v) f(s, x-(t-s)v,v)\dd s
\label{expand_h21}
    \\& +  c_\mu w \mu (v)  \int_{\prod_{j=1}^{k} \mathcal{V}_j}   
     \sum\limits^{k-1}_{i=1} 
     \Big\{   \mathbf{1}_{t_{i+1}<t_* \leq t_{i }}  g (t_*, x_{i } -  (t_{i }-t_*)   v_{i }, v_{i})  \Big\}
      \dd  \Sigma ^{k}_{i}
\label{expand_h22}
    \\& + c_\mu w \mu (v)  \int_{\prod_{j=1}^{k} \mathcal{V}_j}   
     \sum\limits^{k-1}_{i=1} 
     \Big\{ \mathbf{1}_{t_* \leq t_{i }}
     \int^{t_i}_{ \max(t_*, t_{i+1})}w(v_i)\varrho^\prime(s) f(s, x_i-(t_i-s)v_i,v_i)\dd s
         \Big\}
      \dd  \Sigma ^{k}_{i} \label{expand_h2}
    \\& +  c_\mu w \mu(v)  \int_{\prod_{j=1}^{k } \mathcal{V}_j}   
    \mathbf{1}_{t_{k } \geq t_* }
    g (t_{k }, x_{k }, v_{k })
     \dd  \Sigma ^{k }_{k }
, \label{expand_h3}
\end{align} 
where
$ \dd  {\Sigma}^{k}_{i}:=\dd \sigma_{k} \cdots  \dd \sigma_{i+1}  \frac{ \dd \sigma_{i}}{c_\mu \mu(v_i)w(v_i)} \dd \sigma_{i-1} \cdots  \dd \sigma_1$, 
 with a probability measure $\dd \sigma_j = c_\mu\mu(v_j) \{ n(x_j) \cdot v_j \} \dd v_j$ on $\mathcal{V}_j$.
\end{lemma}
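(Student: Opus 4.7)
I would prove the identity by induction on $k \geq 1$, alternating two elementary operations at each level: (i) Duhamel integration of the transport equation along a backward characteristic, carried out until one reaches either the stopping time $t_*$ or the next intersection with $\partial \Omega$; and (ii) substitution of the diffuse reflection boundary condition \eqref{diff_f} whenever the characteristic lands on $\partial \Omega$ at a time still strictly greater than $t_*$. The combination of these two operations, iterated $k$ times, is exactly what yields the structure of \eqref{expand_h1}--\eqref{expand_h3}.

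\textbf{Base case $k=1$.} From \eqref{eqtn_f} the weighted product $g = \varrho(t) w(v) f$ satisfies $(\partial_t + v \cdot \nabla_x) g = \varrho'(t) w(v) f$. Integrating along $s \mapsto (s, x - (t-s) v, v)$ from $s = \max(t_*, t_1)$ up to $s = t$ gives
\[
g(t, x, v) = g\big(\max(t_*, t_1), x - (t - \max(t_*, t_1)) v, v\big) + \int_{\max(t_*, t_1)}^{t} \varrho'(s) w(v) f(s, x - (t-s) v, v) \, ds.
\]
On $\{t_1 \leq t_*\}$ the first term is exactly \eqref{expand_h1}, while on $\{t_1 > t_*\}$ it equals $g(t_1, x_1, v)$ with $(x_1, v) \in \gamma_-$, which by \eqref{diff_f} may be rewritten as
\[
g(t_1, x_1, v) = c_\mu w(v) \mu(v) \int_{\mathcal{V}_1} \frac{\{n(x_1) \cdot v_1\}}{w(v_1)} \, g(t_1, x_1, v_1) \, dv_1.
\]
Since $\{n(x_1) \cdot v_1\} \, dv_1 / w(v_1) = d\sigma_1 / (c_\mu \mu(v_1) w(v_1)) = d\Sigma^1_1$, this matches the $k=1$ version of \eqref{expand_h3}; the sums \eqref{expand_h22}--\eqref{expand_h2} are empty.

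\textbf{Inductive step.} Suppose the formula holds at level $k$. Apply the base-case decomposition to $g(t_k, x_k, v_k)$ appearing inside the remainder \eqref{expand_h3}. This produces three contributions. The first, $\mathbf{1}_{t_{k+1} \leq t_* \leq t_k} \, g(t_*, x_k - (t_k - t_*) v_k, v_k)$, becomes the $i=k$ summand of \eqref{expand_h22} after inserting the trivial identity $\int_{\mathcal{V}_{k+1}} d\sigma_{k+1} = 1$. The second, $\mathbf{1}_{t_* \leq t_k} \int_{\max(t_*, t_{k+1})}^{t_k} w(v_k) \varrho'(s) f \, ds$, becomes the $i=k$ summand of \eqref{expand_h2} by the same trivial insertion. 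The third, $\mathbf{1}_{t_{k+1} > t_*} \, g(t_{k+1}, x_{k+1}, v_k)$ with $(x_{k+1}, v_k) \in \gamma_-$, is rewritten by \eqref{diff_f} as an integral over $v_{k+1} \in \mathcal{V}_{k+1}$; combining the existing $d\Sigma^k_k = \frac{d\sigma_k}{c_\mu \mu(v_k) w(v_k)} d\sigma_{k-1} \cdots d\sigma_1$ with the newly generated factor $c_\mu w(v_k) \mu(v_k) \cdot \{n(x_{k+1}) \cdot v_{k+1}\} \, dv_{k+1} / w(v_{k+1})$ reassembles exactly $d\Sigma^{k+1}_{k+1}$, which is the level-$(k+1)$ remainder.

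\textbf{Main obstacle.} The only delicate point is the measure bookkeeping. The modified factor $d\sigma_i / (c_\mu \mu(v_i) w(v_i))$ at position $i$ in $d\Sigma^k_i$ appears precisely because at the stopping index $i$ one retains a $g$-value weighted by $w(v_i)$ and carrying a factor $\varrho$, while at every other index $j \neq i$ a full diffuse-reflection substitution has been performed, producing the unweighted probability measure $d\sigma_j$. Verifying that each inductive substitution respects this structure, and that the trivial $v_j$-integrations inserted to align the measures of the newly-generated terms with $d\Sigma^{k+1}_i$ are permissible (it suffices that $d\sigma_j$ is a probability measure on $\mathcal{V}_j$, which holds by the choice $c_\mu = \sqrt{2\pi}$), constitutes the bulk of the computation. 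Strict convexity of $\Omega$ ensures $t_{\mathbf{b}}(x_j, v_j) > 0$ on $\mathcal{V}_j$ outside a measure-zero grazing set, so the backward cycles are a.e.\ well-defined and the identities hold pointwise under the integrals.
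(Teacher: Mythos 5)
Your induction — Duhamel along the backward characteristic up to $\max(t_*,t_1)$, then substitution of the diffuse boundary condition \eqref{diff_f} at each bounce, with the measure bookkeeping for $\dd\Sigma^k_i$ handled by inserting the trivial probability integrals $\int_{\mathcal{V}_j}\dd\sigma_j=1$ — is correct and is exactly the standard derivation that the paper defers to the cited references \cite{CKL,EGKM,Guo10}. No gaps.
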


 \subsection{Weighted $L^1$-estimates}
 As the first part of our $L^1$-$L^\infty$ framework, we prove an $L^1$-decay of the fluctuation $f$ as $t\rightarrow \infty$ in Proposition \ref{theorem:1}, following the idea of aperiodic Ergodic theorem (e.g. \cite{Bernou,MT}).   
We prove a key lower bound with a \textit{unreachable defect}, crucially using the stochastic formulation in Lemma \ref{sto_cycle} (see the precise statement in Lemma \ref{lem:Doeblin}): for $f_0  \geq 0$, $t-t_*\gg1$,
 \Be\label{est:Doeblin_rought}
f(t,x,v)\geq  
\mathfrak{m}(x,v)  \{
\|f(t_* ) \|_{L^1_{x,v}}
-   \| \mathbf{1}_{t_\mathbf{f} \gtrsim |t-t_*| 
}  f(t_* )   \|_{L^1_{x,v}}
 \} \ \ \text{for some non-negative function} \  \mathfrak{m} .
\Ee  
This unreachable defect, which stems from small velocity particles in the outgoing flux of the diffuse reflection (\ref{diff_F}), is intrinsic unless the wall Maxwellian $c_\mu\mu(v)$ vanishes around $|v|=0$. 
\hide
 \begin{lemma}
Suppose $f$ solve (\ref{eqtn_f}) with (\ref{diff_f}). Assume $f_0(x,v) \geq 0$ (no need of (\ref{cons_mass_f})). For any $T_0\gg1$ and $N\in \mathbb{N}$ there exists $\mathfrak{m}(x,v)\geq 0$, which only depends on $\O$ and $T_0$ (see (\ref{def:m}) for the precise form), such that 
\Be\label{est:Doeblin}
f(t,x,v)\geq  
\mathfrak{m}(x,v) \Big\{
\iint_{\O \times \R^3}f(t_*,x,v) \dd v \dd x 
-  \iint_{\O \times \R^3} \mathbf{1}_{t_\mathbf{f}(x,v)\geq \frac{3T_0}{4}} f(t_*,x,v)  \dd v \dd x 
\Big\}.
\Ee

\end{lemma}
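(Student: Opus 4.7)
The plan is to specialize the stochastic cycle expansion of Lemma \ref{sto_cycle} to $w \equiv 1$ and $\varrho \equiv 1$, which kills the $\varrho'$ terms (\ref{expand_h21}) and (\ref{expand_h2}), and to choose $t - t_* \sim T_0$ with bounce depth $k = N$. Since $f_0 \geq 0$ and the diffuse reflection preserves positivity, $f(s,\cdot,\cdot) \geq 0$ for every $s$, so every surviving term in the decomposition is non-negative. Dropping the contributions (\ref{expand_h1}) and (\ref{expand_h22}) (which only weakens the bound), one keeps the deep boundary term (\ref{expand_h3}):
\begin{equation*}
f(t,x,v) \;\geq\; c_\mu \mu(v) \int_{\prod_{j=1}^{N} \mathcal{V}_j} \mathbf{1}_{t_N \geq t_*}\, f(t_N, x_N, v_N) \, \dd \Sigma_N^N.
\end{equation*}
What remains is to convert this multiple velocity integral into a phase-space integral of $f(t_*, \cdot, \cdot)$ over $\O \times \R^3$ against an explicitly positive density.

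The heart of the argument is a bouncing change of variables that identifies the remaining degrees of freedom with the footpoint $(y,u)$ of the backward trajectory at time $t_*$: on the last leg one sets $u = v_{N-1}$ and $y = x_{N-1} - (t_{N-1} - t_*) v_{N-1}$, which is non-degenerate on the set $\{t_N \leq t_* \leq t_{N-1}\}$. After truncating each $\mathcal{V}_j$ to $|v_j| \in [\delta, \delta^{-1}]$ and $n(x_j) \cdot v_j \geq \delta$, strict convexity of $\O$ together with smoothness of $\xi$ keeps the intermediate footpoints $x_j$ away from the grazing set and forces the exit maps $(\tb, \xb)$ to be smooth with controlled Jacobians as in \cite{GKTT1}. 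The resulting total Jacobian is bounded below by a constant depending only on $\O, T_0, N, \delta$, while the truncation loss is absorbed by the Maxwellian weights $c_\mu \mu(v_j)$ inside $\dd \Sigma_N^N$.

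The unreachable defect appears once one asks which footpoints $(y, u) \in \O \times \R^3$ at time $t_*$ can actually serve as the starting point of an admissible backward $N$-cycle terminating at $(x, v)$ at time $t$. A given $(y, u)$ enters the admissible set only if the forward trajectory from it meets $\partial \O$ within the available window, i.e.\ only if $t_\mathbf{f}(y, u) < \tfrac{3T_0}{4}$; particles with $t_\mathbf{f}(y, u) \geq \tfrac{3T_0}{4}$ spend too long in transit to be captured by the $N$-cycle. Hence after the change of variables the integral against $f(t_*)$ is restricted to $\{t_\mathbf{f} < 3T_0/4\}$, which one rewrites as the total mass minus the defect $\iint \mathbf{1}_{t_\mathbf{f} \geq 3T_0/4}\, f(t_*)$, producing the stated lower bound with $\mathfrak{m}(x, v)$ equal to $c_\mu \mu(v)$ multiplied by the uniform Jacobian factor and the measure of the admissible velocity cone.

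The main obstacle is establishing surjectivity, and a uniform positive Jacobian lower bound, for the multi-bounce map over the admissible set uniformly in the target $(x, v)$, in a general strictly convex but non-symmetric domain. Unlike the disk/ball case of \cite{KLT}, the cycle map here is only implicitly defined, and its control relies on the curvature condition $\sum \partial_i \partial_j \xi \, \zeta_i \zeta_j \gtrsim |\zeta|^2$ combined with smoothness of $(\tb, \xb)$ away from grazing. Pinning down the precise form of $\mathfrak{m}(x,v)$ as a quantitative function of the geometric data of $\O$ and of $T_0, N$ is where this step becomes delicate.
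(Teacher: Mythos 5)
Your overall strategy (positivity of $f$, drop all but the deep term of the stochastic cycle expansion, change variables back to phase space at time $t_*$, and identify the unreachable set with $\{\tf\geq \frac{3T_0}{4}\}$) is the same as the paper's, but the proof is not complete: the step you yourself flag as "the main obstacle" --- surjectivity of the multi-bounce map onto $\O\times\R^3$ and a uniform positive lower bound for its Jacobian, uniformly in the target $(x,v)$ --- is precisely the content of the lemma, and your truncation-plus-implicit-function sketch does not establish it. The paper avoids this obstacle entirely: it fixes $k=3$ (not $k=N$; in \eqref{est:Doeblin} the integer $N$ only indexes the time window $t=NT_0$, $t_*=(N-1)T_0$) and applies the \emph{exact, globally bijective} change of variables $v_j\mapsto(\xb(x_j,v_j),\tb(x_j,v_j))$ of Lemma \ref{lem:mapV} on the two intermediate legs. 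The free variables then become $(x_2,t_{\mathbf{b},1},x_3,t_{\mathbf{b},2},v_3)$; after substituting $t_\pm=t_{\mathbf{b},1}\pm t_{\mathbf{b},2}$, the pair $(x_2,t_-)$ is pure slack (integrated over a set of measure $\gtrsim |\p\O|\,T_0$), while $(x_3,v_3,t_+)$ parametrizes, via \eqref{COV}, exactly the phase points $(y,u)$ with $\tf(y,u)\leq\frac{3T_0}{4}$. Surjectivity is thus automatic, and the Jacobian lower bound follows from strict convexity \eqref{convex:2} together with the restrictions $|x_j-x_{j+1}|\geq\delta$ and $t_{\mathbf{b},j}\in[\frac{T_0}{8},T_0]$ enforced by \eqref{def:X_2}--\eqref{def:T+}.

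Two further problems with your setup. First, taking bounce depth $k=N$ with a per-leg velocity truncation $|v_j|\in[\delta,\delta^{-1}]$, $n(x_j)\cdot v_j\geq\delta$ produces a lower-bound constant that decays like $c^N$, whereas the statement requires $\mathfrak{m}$ to depend only on $\O$ and $T_0$; the Maxwellian weights in $\dd\Sigma^N_N$ cannot "absorb" this loss, since each truncated $\dd\sigma_j$ has mass strictly less than $1$. Second, your lower bound is claimed for all $(x,v)$, but if $\tb(x,v)>t-t_*$ the cycle term you keep is empty; the paper's $\mathfrak{m}$ in \eqref{def:m} carries the indicator $\mathbf{1}_{\tb(x,v)\leq T_0/4}$ for exactly this reason, and your $\mathfrak{m}$ needs an analogous cutoff. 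There is also an internal inconsistency: you discard \eqref{expand_h22} yet describe the last-leg substitution on the set $\{t_N\leq t_*\leq t_{N-1}\}$, which is the support of \eqref{expand_h22}, not of \eqref{expand_h3}; in the paper the trace-back to time $t_*$ is instead carried by the $t_+$ integration inside the $k=3$ term \eqref{Doeblin_2}.
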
 \unhide

Next we control the unreachable defect using the weighted $L^1$-estimates. Due to the invariance of $\xb$ and $\xf$ under $v\cdot \nabla_x$, which has been crucially used in construction of the distance function invariant under Vlasov operator in \cite{CKL}, a weight $\varphi(\tf)$ provide an effective dissipation $v\cdot \nabla_x \varphi(\tf)= -\varphi^\prime(\tf)$ for $\varphi^\prime\geq0$, as long as a byproduct term on $\gamma_-$ can be controlled. Inspired by the proof of an $L^1$-trace theorem of \cite{GKTT1}, we derive that 
\begin{lemma}\label{lemma:energy}
Suppose $\varphi (\tau ) \geq0$, $\varphi^\prime \geq0$, and 
\Be\label{cond:varphi} 
\int_1^\infty \tau^{-5}\varphi(\tau) \dd \tau< \infty. 
\Ee
Suppose $f$ solve (\ref{eqtn_f}) and (\ref{diff_f}). Then there exists $C>0$ such that for all $0 \leq t_* \leq t$, 
\Be
\begin{split}
\label{energy_varphi}
  \| \varphi(\tf) f(t) \|_{L^1_{x,v}}
  + \int^{t}_{t_*}
  \| \varphi^\prime(\tf) f  \|_{L^1_{x,v}} 
  +  \int_{t_*}^{t} | \varphi(\tf) f|_{L^1_{\gamma_+}}
  -  \frac{1}{4}  \int^{t}_{t_*} |f |_{L^1_{\gamma_+}}  
  \leq     \| \varphi(\tf) f(t_*) \|_{L^1_{x,v}}+
 C \| f(t_*) \|_{L^1_{x,v}}.
\end{split}
\Ee 
\end{lemma}
It is worth informing beforehand that the exponent $-5$ in (\ref{cond:varphi}) will basically restrict the decay rate of Theorem \ref{theorem}. Some postulation on the wall Maxwellian such as 
\hide

$\mu(v)/|v|^r<\infty$ 

\unhide
 $\mu(v)/ \langle v \rangle^r<\infty$ 
for some $r>0$ in (\ref{diff_F}) or a similar assumption on the inflow boundary condition would provide faster decay. 

Employing a function $\varphi$ with $\varphi^\prime \rightarrow  \infty$ as $\tau \rightarrow \infty$ (see $\varphi_1$ in \eqref{varphis}), an $L^1$-term majorizes the unreachable defect of the lower bound \eqref{est:Doeblin_rought} with a large factor $\varphi(\frac{3T_0}{4})$. Adding \eqref{est:Doeblin_rought} and \eqref{energy_varphi} with the proper ratio, suggested by the large factor, we establish the uniform estimates of the following energies (see $\varphi_i$'s in \eqref{varphis}), with $\| \mathfrak{m} \|_{L^1_{x,v}} \sim \delta_{\mathfrak{m},T_0}$ (see \eqref{est:m}), 
 \Be\label{|||i}
\vertiii{f}_i:=   \|f  \|_{L^1_{x,v}}
    +    \frac{ 4\delta_{\mathfrak{m}, T_0} }{ \varphi_{i-1} (\frac{3T_0}{4})}
 \|  \varphi_{i-1}(\tf) f\|_{L^1_{x,v}}   +    \frac{ 4\delta_{\mathfrak{m}, T_0} }{T_0  \varphi_{i-1}(\frac{3T_0}{4})}\| 
 \varphi_{i}(\tf)
f\|_{L^1_{x,v}}, \ \ \text{for } \   i=1,4.
   \Ee

Finally we interpolate
$\| 
 \varphi_{1}(\tf)
f\|_{L^1_{x,v}}$ by $\| 
 \varphi_{0}(\tf)
f\|_{L^1_{x,v}}$ and $\| 
 \varphi_{4}(\tf)
f\|_{L^1_{x,v}}$, and using the boundedness of $\vertiii{f}_4$, we prove the $L^1$-decay result (see also the similar result in \cite{Bernou}):
\begin{proposition}\label{theorem:1}
Given the same assumptions of Theorem \ref{theorem}, 
\Be\label{theorem_1}
\| f(t) \|_{L^1_{x,v}} \lesssim  (\ln\langle t\rangle)^{2}
\langle t\rangle^{ -4}  \{ \| e^{\theta^\prime |v|^2} f_0\|_{L^\infty_{x,v}}+ \| \varphi_4(\tf) f  _0 \|_{ L^1_{x,v}} 
 \} .
\Ee
\end{proposition}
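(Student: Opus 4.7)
The plan is to combine the Doeblin-type lower bound \eqref{est:Doeblin_rought} with the weighted $L^1$-energy estimate of Lemma \ref{lemma:energy}, in the spirit of the aperiodic Ergodic approach (\cite{Bernou,MT}). First, I would split the signed fluctuation $f = f_+ - f_-$ via linearity of \eqref{eqtn_f}--\eqref{diff_f}, where $f_\pm$ solve the same problem with nonnegative initial data $(f_0)_\pm \geq 0$; mass conservation \eqref{cons_mass_f} gives $\|(f_0)_+\|_{L^1_{x,v}} = \|(f_0)_-\|_{L^1_{x,v}}$, and the null flux condition keeps each of $\|f_\pm(t)\|_{L^1_{x,v}}$ constant in time. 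Re-splitting $f(t_n)=g_n^+ - g_n^-$ at stroboscopic times $t_n = n T_0$ into disjointly supported pieces (so that $\|g_n^\pm(t_n)\|_{L^1_{x,v}}=\tfrac12\|f(t_n)\|_{L^1_{x,v}}$) and applying \eqref{est:Doeblin_rought} to each of $g_n^\pm$ on $[t_n, t_{n+1}]$ pointwise, then taking the minimum and using $\||f|\|_{L^1}=\|g_n^++g_n^-\|_{L^1}-2\iint \min(g_n^+,g_n^-)$, I obtain
\begin{equation*}
\|f(t_{n+1})\|_{L^1_{x,v}} \leq (1-\delta_{\mathfrak{m},T_0})\,\|f(t_n)\|_{L^1_{x,v}} + 2\delta_{\mathfrak{m},T_0}\,\big\|\mathbf{1}_{\tf\gtrsim T_0}\,f(t_n)\big\|_{L^1_{x,v}}.
\end{equation*}
The first term is a genuine contraction step by step in $n$; the second, the \emph{unreachable defect}, records the small-velocity particles that the wall cannot refresh within a window of length $T_0$.

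Next, I would control the defect using the trivial bound $\mathbf{1}_{\tf\gtrsim T_0}\leq \varphi_{i-1}(\tf)/\varphi_{i-1}(\tfrac{3T_0}{4})$ together with Lemma \ref{lemma:energy} applied to $\varphi=\varphi_i$. The dissipation $\int\|\varphi_i'(\tf) f\|_{L^1_{x,v}}$ in \eqref{energy_varphi} absorbs the $\tfrac14 |f|_{L^1_{\gamma_+}}$ boundary leak (since the incoming trace at $\gamma_-$ instantly resets $\tf$ to a macroscopic value thanks to strict convexity) and couples successive weight levels $\varphi_{i-1}\to\varphi_i$. This structure naturally produces the hierarchy $\vertiii{f}_i$ of \eqref{|||i}: for $i=4$ one obtains $\sup_{t\geq 0}\vertiii{f(t)}_4 \lesssim \vertiii{f_0}_4$, bounded uniformly by $\|\varphi_4(\tf)f_0\|_{L^1_{x,v}} + \|e^{\theta'|v|^2}f_0\|_{L^\infty_{x,v}}$, with the $L^\infty$ datum dominating the unweighted $L^1$-part via $\int e^{-\theta'|v|^2}\dd v<\infty$. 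For $i=1$, the one-step inequality displayed above combines with the same weighted energy estimate to yield a discrete-in-$n$ difference inequality for $\vertiii{f(t_n)}_1$ that integrates to genuine polynomial decay upon iteration.

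Finally, to convert the weighted-norm bounds into the advertised decay of $\|f(t)\|_{L^1_{x,v}}$, I would interpolate
\begin{equation*}
\|\varphi_1(\tf)\,f\|_{L^1_{x,v}} \leq \|f\|_{L^1_{x,v}}^{1-\alpha}\,\|\varphi_4(\tf)\,f\|_{L^1_{x,v}}^{\alpha}
\end{equation*}
for the exponent $\alpha$ dictated by the growth rates of $\varphi_1,\varphi_4$, dispose of the $\varphi_4$-factor via the uniform-in-time bound on $\vertiii{f}_4$, and iterate the difference inequality for $\vertiii{f}_1$ over windows of length $T_0\sim \ln\langle t\rangle$ chosen so that the exponential contraction $(1-\delta_{\mathfrak{m},T_0})^{t/T_0}$ matches the polynomial defect factor $\varphi_0(\tfrac{3T_0}{4})^{-1}$. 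The hardest step will be this simultaneous tuning of the window length $T_0=T_0(t)$, the weight hierarchy $\varphi_i$, and the interpolation exponent: the squared logarithm in \eqref{theorem_1} is the direct trace of this balance, while the polynomial exponent $-4$ reflects the ceiling imposed by the integrability condition \eqref{cond:varphi}, which forbids $\varphi$ from growing any faster than $\tau^{4-}$ before $\tau^{-5}\varphi(\tau)$ fails to be integrable.
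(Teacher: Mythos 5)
Your architecture matches the paper's: a Doeblin-type one-step contraction with an unreachable defect, the weighted energies $\vertiii{\cdot}_1,\vertiii{\cdot}_4$ built from Lemma \ref{lemma:energy}, an interpolation of $\|\varphi_1(\tf)f\|_{L^1_{x,v}}$ between the low and high weights, and an iteration. The positive/negative splitting and the identity $\|f(t_{n+1})\|_{L^1}\leq(1-\delta_{\mathfrak{m},T_0})\|f(t_n)\|_{L^1}+2\delta_{\mathfrak{m},T_0}\|\mathbf{1}_{\tf\gtrsim T_0}f(t_n)\|_{L^1}$ are exactly Proposition \ref{prop:Doeblin}. However, the quantitative stage of your argument --- the part that actually produces the rate --- has two genuine gaps.

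First, your tuning is aimed at the wrong balance. You propose $T_0=T_0(t)\sim\ln\langle t\rangle$ and claim to match $(1-\delta_{\mathfrak{m},T_0})^{t/T_0}$ against $\varphi_0(\tfrac{3T_0}{4})^{-1}$. But $\varphi_0(\tau)\sim\ln(e+\ln(e+\tau))$ is an iterated logarithm, so $\varphi_0(\tfrac{3T_0}{4})^{-1}\sim(\ln\ln\ln t)^{-1}$ when $T_0\sim\ln t$; no choice of $T_0(t)$ (logarithmic or otherwise) can extract polynomial decay from any of the factors $\varphi_i(\tfrac{3T_0}{4})^{-1}$, since the defect prefactor only sees $T_0$, and $\delta_{\mathfrak{m},T_0}\sim T_0^{-8}$ makes the contraction degenerate as $T_0$ grows. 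In the paper $T_0$ is \emph{fixed} (via \eqref{cond:T0}), and the free parameter optimized in $t$ is the interpolation threshold $M$ in the pointwise splitting $\varphi_1(\tf)\leq\mathbf{1}_{\tf\geq M}\tfrac{\varphi_1(M)}{\varphi_4(M)}\varphi_4(\tf)+\mathbf{1}_{\tf<M}M\varphi_0(\tf)$. This keeps the recursion for $\vertiii{f(NT_0)}_1$ \emph{linear}, with contraction factor $(1+\tfrac1M)^{-1}$ and inhomogeneity $\mathfrak{R}\sim\tfrac1M\tfrac{\varphi_1(M)}{\varphi_4(M)}\vertiii{f(0)}_4$; the polynomial rate $\langle t\rangle^{-3}$ then comes from $\varphi_1(M)/\varphi_4(M)\sim M^{-3}$ with the nearly linear choice $M=t[2T_0\ln(10+t)^3]^{-1}$ in \eqref{def:M}. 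Your multiplicative H\"older interpolation $\|\varphi_1 f\|\leq\|f\|^{1-\alpha}\|\varphi_4 f\|^{\alpha}$ is not an innocuous substitute: fed into the linear one-step inequality it produces a recursion of the form $u_{n+1}\leq(1-c)u_n+Cu_n^{1-\alpha}K^{\alpha}$, whose fixed point is a strictly positive barrier $\sim(C/c)^{1/\alpha}K$, so it does not close to give decay without first reorganizing it into the paper's additive (threshold) form.

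Second, even with the correct optimization the iteration only yields $\vertiii{f(t)}_1\lesssim(\ln\langle t\rangle)^{2-\delta/2}\langle t\rangle^{-3}\{\cdots\}$, i.e.\ the exponent $-3$, not the claimed $-4$. The missing last step is one more application of the $\varphi_1$-dissipation on the window $[t/2,t]$: since $\varphi_1'\gtrsim\varphi_0\geq\varphi_0(0)=1$ and $\|f(s)\|_{L^1}$ is nonincreasing, $\tfrac t2\|f(t)\|_{L^1_{x,v}}\lesssim\int_{t/2}^{t}\|\varphi_1'(\tf)f(s)\|_{L^1_{x,v}}\dd s\lesssim\vertiii{f(t/2)}_1$, which converts the $\langle t\rangle^{-3}$ bound on the energy into the $\langle t\rangle^{-4}$ bound on $\|f(t)\|_{L^1_{x,v}}$. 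Your closing remark attributes the exponent $-4$ to the integrability ceiling \eqref{cond:varphi}; that ceiling in fact caps the \emph{energy} decay at $\langle t\rangle^{-3}$ (through $\varphi_1/\varphi_4\sim M^{-3}$), and the fourth power only appears after this final dissipation step, which your proposal omits.
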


 \subsection{An $L^\infty$-estimate of Moments} 
   We bootstrap the $L^1$-decay secured in Proposition \ref{theorem:1} to the pointwise bound of the moments. Again, the crucial tool is the stochastic cycle representation in Lemma \ref{sto_cycle} for $t_*=0$. In light of \eqref{theorem_1}, we have a natural choice of $\varrho$ so that $\varrho^\prime(t)\lesssim  (\ln\langle t\rangle)^{-2}
 \langle t\rangle^{4}$ (see \eqref{varrho}). 
   We first establish the control of the time integration terms of \eqref{expand_h2} (we control \eqref{expand_h21} similarly, after applying the stochastic cycles twice):
\begin{lemma}\label{lem:bound1}
For $i = 2, \cdots , k-1$, $w(v)=e^{\theta |v|^2}$ for $\theta>0$, and a differentiable $\varrho(t)$, we have
\Be\label{bound1:expand_h}
  \Big|\int_{\prod_{j=1}^{k} \mathcal{V}_j}   
  \mathbf{1}_{t_{i+1}<0 \leq t_{i }} 
 \int^{t_i}_{0}w(v_i)\varrho^\prime(s) f(s, x_i-(t_i-s)v_i,v_i)\dd s
      \dd {\Sigma}^{k}_{i}\Big|\lesssim
 \int^t_{0}
       \| \varrho^\prime(s) f(s) \|_{L^1_{x,v}} \dd s.
\Ee
\end{lemma}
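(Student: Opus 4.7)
The plan is to apply Fubini to move the $s$-integration outside and, for each fixed $s$, reduce the claim to a uniform bound of the remaining $(v_1,\dots,v_k)$-integral by $\|f(s)\|_{L^1_{x,v}}$. The probability measures $\dd\sigma_{i+1},\dots,\dd\sigma_k$ integrate to unity since the integrand depends only on $v_1,\dots,v_i$ and $s$, and the exponential weight cancels via $w(v_i)\cdot \frac{\dd\sigma_i}{c_\mu\mu(v_i)w(v_i)}=\{n(x_i)\cd v_i\}\dd v_i$, leaving $\{n(x_i)\cd v_i\}|f(s,y,v_i)|$ against the probability measures $\dd\sigma_1,\dots,\dd\sigma_{i-1}$, where $y=x_i-(t_i-s)v_i$.

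The heart of the argument is a change of variables $(v_{i-1},v_i)\mapsto (y,v)$ at fixed $s$ and $v_1,\dots,v_{i-2}$, writing $y=x_i-(t_i-s)v_i$, $v=v_i$, and using $x_i=x_{i-1}-\tau_{i-1}v_{i-1}$, $t_i=t_{i-1}-\tau_{i-1}$ with $\tau_{i-1}=\tb(x_{i-1},v_{i-1})$. Implicit differentiation of $\xi(x_i)=0$ gives $\partial\tau_{i-1}/\partial v_{i-1}=-\tau_{i-1}\nabla\xi(x_i)/(\nabla\xi(x_i)\cd v_{i-1})$, and a block-triangular determinant computation yields
\[
\Big|\det\tfrac{\partial(y,v)}{\partial(v_{i-1},v_i)}\Big|=\tau_{i-1}^3\,\frac{|n(x_i)\cd v_i|}{|n(x_i)\cd v_{i-1}|}.
\]
After the cancellations the effective weight becomes $\mathcal{J}=\frac{c_\mu\mu(v_{i-1})\{n(x_{i-1})\cd v_{i-1}\}|n(x_i)\cd v_{i-1}|}{\tau_{i-1}^3}$, multiplying $|\varrho'(s)f(s,y,v)|\,\dd y\,\dd v$ on $\Omega\times\R^3$.

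To estimate $\mathcal{J}$, I would invoke strict convexity of $\Omega$ via the Taylor expansion of $\xi$ along the chord from $x_{i-1}$ to $x_i$ (whose Hessian is positive definite): this delivers $\{n(x_{i-1})\cd v_{i-1}\}\sim|x_{i-1}-x_i||v_{i-1}|$ and $|n(x_i)\cd v_{i-1}|\sim|x_{i-1}-x_i||v_{i-1}|$. Combined with $|v_{i-1}|\tau_{i-1}=|x_{i-1}-x_i|$ and the Gaussian decay $\mu(v_{i-1})|v_{i-1}|^5\ls 1$, this gives $\mathcal{J}\ls 1/|x_{i-1}-x_i|$. The residual singularity is integrable on the 2D surface $\partial\Omega$: changing $v_{i-2}\mapsto(x_{i-1},\rho_{i-2})$ with $\omega_{i-2}=(x_{i-2}-x_{i-1})/|x_{i-2}-x_{i-1}|$, the Gauss-map Jacobian $|n(x_{i-1})\cd\omega_{i-2}|/|x_{i-2}-x_{i-1}|^2$ combines with the strict-convexity factor $\{n(x_{i-2})\cd\omega_{i-2}\}|n(x_{i-1})\cd\omega_{i-2}|\sim|x_{i-2}-x_{i-1}|^2$ to collapse the denominator, while $\int_0^\infty \rho^3 e^{-\rho^2/2}\,\dd\rho<\infty$ disposes of $\rho_{i-2}$; this yields $\dd\sigma_{i-2}\ls \dd S_{x_{i-1}}$. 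Then $\int_{\partial\Omega}\dd S_{x_{i-1}}/|x_{i-1}-x_i|<\infty$ by 2D integrability, while $\dd\sigma_1,\dots,\dd\sigma_{i-3}$ are probability measures that integrate to unity.

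The most delicate step will be taming the residual singularity $1/|x_{i-1}-x_i|$ arising from nearly-tangent trajectories at $x_{i-1}$. The strict-convexity hypothesis is indispensable throughout: it furnishes both the numerator cancellations in the Jacobian of $(v_{i-1},v_i)\mapsto(y,v)$ and the identities that turn $\dd\sigma_{i-2}$ into a tame surface measure on $\partial\Omega$. The hypothesis $i\ge 2$ (providing the auxiliary boundary point $x_{i-1}$, plus the extra integration variable $v_{i-2}$ when $i\ge 3$) is exactly what permits this absorption, after which one obtains the desired uniform bound $\lesssim \|f(s)\|_{L^1_{x,v}}$ and concludes by the outer $s$-integration.
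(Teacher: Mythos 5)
Your argument is correct and rests on the same machinery as the paper's proof: the exit-point change of variables of Lemma \ref{lem:mapV} applied at the two levels $j=i-1$ and $j=i-2$, the convexity bounds \eqref{convex:1}--\eqref{convex:2} to control the resulting Jacobians, the Gaussian in $\dd\sigma_{i-1}$ to kill the $|v_{i-1}|^5$ growth, and a tube-type change of variables producing $\|\varrho'(s)f(s)\|_{L^1_{x,v}}$. The bookkeeping, however, is genuinely different. You spend both coordinates $(\xb(x_{i-1},v_{i-1}),\tb(x_{i-1},v_{i-1}))$ of the level-$(i-1)$ map on the spatial point $y=x_i-(t_i-s)v_i$, so the surviving singularity in your weight $\mathcal{J}\lesssim \mu(v_{i-1})|v_{i-1}|^{5}/|x_{i-1}-x_i|$ is purely spatial; you then absorb it by converting $\dd\sigma_{i-2}$ into a bounded multiple of $\dd S_{x_{i-1}}$ and invoking two-dimensional integrability of $1/|x_{i-1}-x_i|$ on $\p\O$. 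The paper instead forms $y$ from $\xb(x_{i-1},v_{i-1})$ together with the \emph{other} exit time $\tb(x_{i-2},v_{i-2})$, keeps $\tb(x_{i-1},v_{i-1})$ as a free variable, and removes the temporal singularities $1/t_{\mathbf{b},j}$ by a case split on $\min(t_{\mathbf{b},i-1},t_{\mathbf{b},i-2})$ combined with the rescaled surface measure $\dd S_{x_{i-1}}\lesssim |t_{\mathbf{b},j}|^2\,\dd S_z$, arriving at the integrable weight $\langle t_{\mathbf{b},j}\rangle^{-5}$. Your route avoids that case split and is arguably tidier. Two small remarks: for the upper bound on $\mathcal{J}$ only \eqref{convex:1} is needed, so the two-sided relations $\{n(x_{i-1})\cdot v_{i-1}\}\sim|x_{i-1}-x_i||v_{i-1}|$ you assert are stronger than necessary (their lower halves are where strict convexity would enter, and they are not used); and, exactly like the paper's proof, your absorption mechanism consumes the integration variable $v_{i-2}$, so both arguments as written cover $i\geq 3$ and the endpoint case $i=2$ of the statement would require the preceding velocity to be available as an integration variable (as it is in the application within the proof of Theorem \ref{theorem}).
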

The key idea of the proof is using the change of variables $v_{i-1} \mapsto (\xb(x_{i-1}, v_{i-1}), \tb(x_{i-1},v_{i-1}))$, which has been crucially used in evaluating the boundary singularity in \cite{CKL}. By this change of variables we are able to convert the velocity integral of $\dd \sigma_{j-1}$ into an integration of the spatial variable $x_i- (t_i-s)v_i= \xb(x_{i-1}, v_{i-1}) - (t_{i-1} -\tb(x_{i-1},v_{i-1}) -s) v_i$, while the singularity occurs from its Jacobian when $\tb(x_{i-1},v_{i-1})=0$ (see Lemma \ref{lem:mapV}). We remedy such singularity by applying the change of variables twice for $j=i-1$ and $j=i-2$: among the free variables $\{  \xb(x_{i-1}, v_{i-1}), \tb(x_{i-1},v_{i-1}),  \xb(x_{i-2}, v_{i-2}), \tb(x_{i-2},v_{i-2}) \}$ we utilize $\xb(x_{i-1}, v_{i-1})$ and $\tb(x_{i-2},v_{i-2})$ for the spatial variables $x_i- (t_i-s)v_i= \xb(x_{i-1}, v_{i-1}) - (t_{i-2}  -\tb(x_{i-2},v_{i-2})-\tb(x_{i-1},v_{i-1})-s) v_i$, while we are able to appease singularity from the two change of variables using the integration of $\xb(x_{i-2}, v_{i-2})$ and $\tb(x_{i-1},v_{i-1})$.

 \smallskip
 
Next we control \eqref{expand_h3} by establishing the following estimate:
\begin{lemma}\label{lem:small_largek}There exists $\mathfrak{C}= \mathfrak{C}(\O)>0$ (see \eqref{choice:k} for the precise choice) such that  
\begin{equation} \label{small_largek}
\text{if }  \  k \geq \mathfrak{C}t  \text{ then } 
\sup_{(x,v) \in \bar{\O} \times \R^3}  \Big(\int_{\prod_{j=1}^{k -1} \mathcal{V}_j}   
    \mathbf{1}_{t_{k }(t,x,v,v_1,\cdots, v_{k-1}) \geq 0 }
\dd \sigma_1 \cdots \dd \sigma_{k-1}\Big) \lesssim e^{-t}.
\end{equation}  
\end{lemma}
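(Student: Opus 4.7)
The plan is to view the integral as a probability under the product measure $\prod_{j=1}^{k-1}\dd\sigma_j$ on $\prod_j\mathcal{V}_j$, which is indeed a probability measure thanks to the normalization $c_\mu=\sqrt{2\pi}$. Setting $x_0:=x$ and $v_0:=v$, the recursion in \eqref{def:t_k} gives $t_k=t-\sum_{j=0}^{k-1}\tb(x_j,v_j)$. Since $\tb(x_0,v_0)\geq 0$ is deterministic in the integration variables, the event $\{t_k\geq 0\}$ is contained in $\{\sum_{j=1}^{k-1}\tb(x_j,v_j)\leq t\}$, and an elementary Chernoff/Markov step yields
\Be\notag
\int_{\prod\mathcal{V}_j}\mathbf{1}_{t_k\geq 0}\prod\dd\sigma_j
\,\leq\, e^{t}\int_{\prod\mathcal{V}_j}\exp\!\Big(-\sum_{j=1}^{k-1}\tb(x_j,v_j)\Big)\prod\dd\sigma_j.
\Ee

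Next I would establish a uniform one-step contraction: there exists $\rho=\rho(\O)\in(0,1)$ such that for every $y\in\p\O$,
\Be\notag
\int_{n(y)\cdot v>0} c_\mu\mu(v)\{n(y)\cdot v\}\,e^{-\tb(y,v)}\,\dd v\,\leq\, \rho.
\Ee
For this I introduce the ``good set'' of velocities $V^*(y):=\{v:\,|v|\leq 1,\ v\cdot n(y)\geq |v|/\sqrt{2}\}$. By the smoothness and strict convexity of $\O$, the inward chord length from $y$ in the direction $-n(y)$ is a positive continuous function on the compact $\p\O$, hence uniformly bounded below by some $2c_\O>0$; by uniform continuity this lower bound persists (down to $c_\O$) on a cone of half-angle $\pi/4$ around $-n(y)$, so for $v\in V^*(y)$ the inward chord length $L(y,v)\geq c_\O$ and therefore $\tb(y,v)=L(y,v)/|v|\geq c_\O$. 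The Knudsen measure of $V^*(y)$ is bounded below by some $p_0=p_0(\O)>0$ uniformly in $y\in\p\O$, so the contraction holds with $\rho:=1-p_0(1-e^{-c_\O})<1$.

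Since $x_j\in\p\O$ for every $j\geq 1$, the tower property (conditioning on $(v_1,\dots,v_{j-1})$, which determines $x_j$) combined with the one-step contraction gives
\Be\notag
\int_{\prod\mathcal{V}_j}\exp\!\Big(-\sum_{j=1}^{k-1}\tb(x_j,v_j)\Big)\prod\dd\sigma_j\,\leq\,\rho^{k-1},
\Ee
independently of the base point $(x,v)\in\bar\O\times\R^3$. Combining with the Chernoff step, the left side of \eqref{small_largek} is bounded above by $e^{t}\rho^{k-1}$, and the choice $\mathfrak{C}:=2/|\ln\rho|+1$ guarantees $e^{t}\rho^{k-1}\lesssim e^{-t}$ whenever $k\geq\mathfrak{C}t$.

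The main obstacle is the uniform one-step contraction: strict convexity, smoothness, and compactness of $\p\O$ are precisely what is needed to produce a uniform positive lower bound on the inward chord length along a fixed cone of directions, independently of the base point $y\in\p\O$. Without such uniformity the Markov-chain iteration would degenerate near ``grazing'' configurations; every other step — the Chernoff inequality and the tower property for the stochastic cycles — is then routine.
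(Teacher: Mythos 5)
Your proof is correct, but it takes a genuinely different route from the paper's. The paper splits each $\mathcal{V}_j$ into a grazing set $\mathcal{V}_j^\delta=\{|n(x_j)\cdot v_j|/|v_j|^2<\delta\}$ of measure $O(\delta^2)$ and its complement, on which $\tb\geq C_\Omega\delta$; the event $t_k\geq 0$ then allows at most $[t/(C_\Omega\delta)]+1$ non-grazing bounces, and the proof proceeds by counting these configurations with binomial coefficients, controlling the combinatorial sum via Stirling's formula, and tuning $\delta$ and $k=\mathfrak{a}([t/(C_\Omega\delta)]+1)$ so that $(\delta^{2\mathfrak{a}}e\mathfrak{a})^{1/(C_\Omega\delta)}\leq e^{-2}$. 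You instead run a Chernoff bound $\mathbf{1}_{t_k\geq0}\leq e^{t}\exp(-\sum_{j\geq1}\tb(x_j,v_j))$ followed by a uniform one-step contraction $\int_{\mathcal{V}_j}e^{-\tb}\dd\sigma_j\leq\rho<1$ iterated through the tower structure of the cycles; this dispenses with Stirling and with the need for the bad set to have \emph{small} measure --- you only need a good set of uniformly positive measure on which $\tb$ is bounded below by a constant. Both yield an explicit $\mathfrak{C}(\O)$ and the same conclusion. One remark on your only geometric input: the claim $\tb(y,v)\geq c_\O$ on the cone $\{|v|\leq1,\ n(y)\cdot v\geq|v|/\sqrt2\}$ need not be argued by uniform continuity of the chord length; it follows in one line from the paper's estimate \eqref{convex:1}, which gives $\tb(y,v)\gtrsim|n(y)\cdot v|/|v|^2\geq 1/(\sqrt2|v|)\geq1/\sqrt2$ there --- the very same inequality the paper uses in its Step 1 to force $\tb\geq C_\Omega\delta$ off the grazing set. (Trivial loose ends: the case $t\lesssim1$ is absorbed into the implicit constant, and the degenerate point $v=0$ contributes nothing since then $t_k=-\infty$.)
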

Similar results have been used in \cite{CKL,Guo10,EGKM} but in this paper, we improve the result (the choice of $k$, in particular) using a sharper bound for the summation of combination from Stirling's formula.

 \smallskip

In the rest of the paper, we collect some basic preliminaries in Section 2; then study the weighted $L^1$-estimates and prove Proposition \ref{theorem:1} in Section 3; and finally prove key results in the $L^\infty$-estimate of moments, and then Theorem \ref{theorem} in Section 4. 

\section{Preliminaries} In this section we state basic preliminaries mainly collected from \cite{CKL,EGKM, GKTT1, GKTT2,Guo10}. 
\begin{lemma}[Lemma 9 in \cite{CKL}]\label{lem:mapV}
Suppose $\O$ is an open bounded subset of $\R^3$ and $\p\O$ is smooth. 
\begin{itemize}
\item For $x \in \p\O$, consider a map
\Be\label{mapV}
v \in \{v \in \R^3
: n(x) \cdot v  >  0\}
\mapsto (\xb, \tb):=
(\xb(x,v), \tb(x,v)) \in \partial \Omega \times \mathbb{R}_{+}.
\Ee Then the map (\ref{mapV}) is bijective and has the change of variable formula as   
\begin{equation}\label{jacob:mapV}
    \dd v  =|\tb|  ^{-4}|n (\xb ) \cdot (x - \xb) | \dd \tb \dd S_{\xb }.
\end{equation}
\item  Similarly we have a bijective map 
\Be\label{mapV_f}
v \in \{v \in \R^3
: n(x) \cdot v<0\}
\mapsto (\xf , \tf ):=
(\xf(x,v), \tf(x,v)) \in \partial \Omega \times \mathbb{R}_{+},
  \ \ 
  \text{with} \   \dd v  = |\tf|  ^{-4}|n (\xf) \cdot (x - \xf) | \dd \tf \dd S_{\xf}.
\Ee 
\end{itemize}

\end{lemma}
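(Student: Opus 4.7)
My plan is to prove Lemma \ref{lem:mapV} in two stages: first establish bijectivity from strict convexity, then compute the Jacobian by decomposing $dv$ into a solid-angle part and a radial part, with $t_{\mathbf{b}}$ providing the radial variable.

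For bijectivity, fix $x \in \partial\Omega$ and $v$ with $n(x) \cdot v > 0$. The inward ray $\tau \mapsto x - \tau v$ enters $\Omega$ for small $\tau > 0$. Since $\Omega$ is bounded and strictly convex (so $\xi(x - \tau v)$ is a strictly convex function of $\tau$ on the interval where it is negative), there is a unique first time $t_{\mathbf{b}}(x,v) > 0$ at which the ray re-meets $\partial\Omega$, giving a well-defined $x_{\mathbf{b}} = x - t_{\mathbf{b}} v \in \partial\Omega$. Conversely, given any $(y, s) \in \partial\Omega \times \mathbb{R}_+$ with $y \neq x$ such that the open segment between $x$ and $y$ lies in $\Omega$ (which is automatic if $y$ is visible from $x$ through $\Omega$—again forced by strict convexity), the vector $v = (x-y)/s$ satisfies $n(x) \cdot v > 0$ and maps back to $(y,s)$. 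Strict convexity ensures there is no second exit in between, and hence the correspondence is a bijection.

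For the Jacobian, introduce spherical coordinates $v = r\omega$ with $r=|v|$ and $\omega \in S^2$, so $dv = r^2\, dr\, d\omega$. From the inverse map $v = (x - x_{\mathbf{b}})/t_{\mathbf{b}}$ we read off
\[
r = \frac{|x - x_{\mathbf{b}}|}{t_{\mathbf{b}}}, \qquad \omega = \frac{x - x_{\mathbf{b}}}{|x - x_{\mathbf{b}}|}.
\]
Differentiating in $t_{\mathbf{b}}$ (with $x_{\mathbf{b}}$ fixed, hence $\omega$ fixed) gives $|dr/dt_{\mathbf{b}}| = |x-x_{\mathbf{b}}|/t_{\mathbf{b}}^2$. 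Separately, the standard solid-angle formula relating a surface element on $\partial\Omega$ to its angular size seen from $x$ reads
\[
d\omega = \frac{|n(x_{\mathbf{b}}) \cdot \omega|}{|x - x_{\mathbf{b}}|^2}\, dS_{x_{\mathbf{b}}}.
\]
Putting these together,
\[
dv = r^2 \, |dr| \, d\omega = \frac{|x-x_{\mathbf{b}}|^2}{t_{\mathbf{b}}^2} \cdot \frac{|x-x_{\mathbf{b}}|}{t_{\mathbf{b}}^2} \cdot \frac{|n(x_{\mathbf{b}}) \cdot \omega|}{|x-x_{\mathbf{b}}|^2} \, dt_{\mathbf{b}} \, dS_{x_{\mathbf{b}}} = t_{\mathbf{b}}^{-4}\, |n(x_{\mathbf{b}}) \cdot (x-x_{\mathbf{b}})| \, dt_{\mathbf{b}} \, dS_{x_{\mathbf{b}}},
\]
using $|x-x_{\mathbf{b}}| \cdot \omega = x - x_{\mathbf{b}}$ in the last step. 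This gives \eqref{jacob:mapV}.

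The forward version \eqref{mapV_f} then follows immediately from the definitional identity $(x_{\mathbf{f}}, t_{\mathbf{f}})(x,v) = (x_{\mathbf{b}}, t_{\mathbf{b}})(x,-v)$: applying the backward result to $-v$ and noting that $d(-v) = dv$ converts the backward bijection and its Jacobian into the forward one verbatim. The main subtle point is not the computation itself but verifying that strict convexity is strong enough to force uniqueness of the exit point and nondegeneracy of $n(x_{\mathbf{b}})\cdot(x-x_{\mathbf{b}})$ (which is nonzero because the secant $x - x_{\mathbf{b}}$ is transverse to $\partial\Omega$ at $x_{\mathbf{b}}$ by strict convexity, ensuring the change of variables is well-defined everywhere).
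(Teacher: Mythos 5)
Your proof is correct; the paper itself gives no proof of this lemma (it is imported as Lemma 9 of \cite{CKL}), and your argument --- writing $\dd v = r^2\,\dd r\,\dd\omega$, using $r=|x-\xb|/\tb$ and the solid-angle (perspective-projection) formula $\dd \omega = |n(\xb)\cdot \omega|\,|x-\xb|^{-2}\dd S_{\xb}$ --- is exactly the standard computation behind that cited result, with the factorization of the Jacobian justified because $\omega$ depends only on $\xb$ and not on $\tb$ (block-triangular structure). The only caveat worth noting is that surjectivity onto all of $\partial\Omega\times\mathbb{R}_+$ genuinely requires the convexity you invoke (otherwise one must restrict to visible boundary points), which is consistent with the paper's standing assumption of a strictly convex domain.
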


\begin{lemma}[Lemma 3, Lemma 4 in \cite{CKL}]For any $g$, 
\begin{align}
 \int_{\gamma_{\pm}} \int_0^{t_{\mp}(x,v)} g(x\mp sv,v) |n(x) \cdot v|\dd s \dd v \dd S_x 
 = \iint_{\O \times \R^3} g(y,v) \dd y \dd v,\label{COV}\\
\int_{\gamma_{\pm}}
g(x_{\mp}(x,v),v)
|n(x) \cdot v| \dd v \dd S_x
  = 
\int_{\gamma_{\mp}}
g(y,v)
|n(y) \cdot v|\dd v \dd S_y.\label{COV_bdry}
\end{align}\hide
\Be\label{COV}
 \int_{\gamma_{\pm}} \int_0^{t_{\mp}(x,v)} g(x\mp sv,v) |n(x) \cdot v|\dd s \dd v \dd S_x 
= \iint_{\O \times \R^3} g(y,v) \dd y \dd v,
\Ee
\Be\label{COV_bdry}
\int_{\gamma_{\pm}}
g(x_{\mp}(x,v),v)
|n(x) \cdot v| \dd v \dd S_x
 = 
\int_{\gamma_{\mp}}
g(y,v)
|n(y) \cdot v|\dd v \dd S_y.
\Ee\unhide
Here, for the sake of simplicity, we have abused the notations temporarily: $t_-=\tb, x_-= \xb$ and $t_+= \tf, x_+= \xf$.  
\end{lemma}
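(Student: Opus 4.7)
The plan is to prove both identities by disintegrating Lebesgue measure on $\O\times\R^3$ along the straight trajectories of the free-transport flow, using strict convexity of $\O$ to guarantee that the relevant maps are bijections away from a negligible grazing set.

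For (\ref{COV}), I would fix $v\in\R^3\setminus\{0\}$ and set $\gamma_+^v:=\{x\in\p\O:n(x)\cdot v>0\}$. Strict convexity implies that the ``tube'' map
\begin{equation*}
(x,s)\longmapsto y:=x-sv,\qquad x\in\gamma_+^v,\ s\in(0,\tb(x,v)),
\end{equation*}
is a bijection onto $\O$, with inverse $y\mapsto(\xf(y,v),\tf(y,v))$. A small boundary patch $\dd S_x$ translated backward by $-v\,\dd s$ generates a parallelepiped whose three-dimensional volume equals the base area times the normal component of the edge, so $\dd y=|n(x)\cdot v|\,\dd s\,\dd S_x$. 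Multiplying by $\dd v$, applying Fubini, and pulling back $g(y,v)$ yields (\ref{COV}) with $\gamma_+$. The $\gamma_-$ version follows from the involution $v\leftrightarrow -v$, since $\tb(x,-v)=\tf(x,v)$ and $\xb(x,-v)=\xf(x,v)$.

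For (\ref{COV_bdry}) the geometric content reduces to the flux-invariance $|n(x)\cdot v|\,\dd S_x=|n(\xb(x,v))\cdot v|\,\dd S_{\xb(x,v)}$ under the billiard map $\Phi_v:\gamma_+^v\to\gamma_-^v$, $\Phi_v(x)=\xb(x,v)$, at fixed $v$. I would obtain this by writing the single interior integral $\iint_{\O\times\R^3}g(y,v)\,\dd y\,\dd v$ in the two disintegrations just proved: testing with functions of the form $g(y,v)=h(\xb(y,v),v)\rho_\e(\tf(y,v))$, which are constant along each trajectory and concentrated near the opposite cap, and letting $\e\downarrow 0$, both tube formulas collapse to boundary integrals whose equality forces the stated flux-invariance. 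Equivalently, the Jacobian of $\Phi_v$ can be computed directly by applying Lemma \ref{lem:mapV} at $x$ and at $\xb(x,v)$ and observing that the factors $|\tb|^{-4}|n\cdot(x-\xb)|$ appearing on the two sides differ exactly by the desired ratio of normal components. Substituting $g(\xb(x,v),v)$ into this measure identity and relabelling $(\xb,v)$ as $(y,v)\in\gamma_-$ yields (\ref{COV_bdry}).

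The main technical delicacy is the grazing set $\{(x,v):n(x)\cdot v=0\}$, on which $\tb,\xb,\tf,\xf$ are degenerate and the tube parametrization fails; strict convexity guarantees that this set has zero measure with respect to both $\dd y\,\dd v$ and $|n(x)\cdot v|\,\dd S_x\,\dd v$, so a standard cutoff-and-limit argument restricted to $\{|n(x)\cdot v|\ge\delta\}$ justifies all changes of variables and Fubini swaps. Apart from this bookkeeping, the argument is classical and I do not expect any substantive obstacle beyond carefully tracking the outgoing/incoming conventions and the $v\leftrightarrow -v$ symmetry.
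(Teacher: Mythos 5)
The paper only cites this lemma from \cite{CKL} and supplies no proof of its own, and your argument is the standard one and is correct: the tube parametrization $(x,s)\mapsto x\mp sv$ with Jacobian $|n(x)\cdot v|$ gives (\ref{COV}), and comparing the two resulting disintegrations of $\iint_{\O\times\R^3}G$ for $G(y,v)=h(\xb(y,v),v)\rho_\e(\tf(y,v))$ collapses, as $\e\downarrow0$, to exactly (\ref{COV_bdry}), with the grazing set handled as you indicate. The only part I would not lean on is your alternative derivation of the flux invariance from Lemma \ref{lem:mapV}, since that lemma concerns the map $v\mapsto(\xb,\tb)$ at fixed $x$ rather than the billiard map $x\mapsto\xb(x,v)$ at fixed $v$; your first, limiting argument is the clean one and suffices.
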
 

\begin{lemma} 
Suppose $f$ solve (\ref{eqtn_f}) and (\ref{diff_f}). For $0 \leq t_* \leq t$,
\begin{align}
\| f(t) \|_{L^1_{x,v}}  \leq \| f(t_*) \|_{L^1_{x,v}} , \label{maximum}\\
\int^{t}_{t_*}  |f(s )|_{L^1_{\gamma_+}}  
 \leq \| f(t_*) \|_{L^1_{x,v}}
+ O( \delta^2 ) \int^{t}_{t_*}  |f(s )|_{L^1_{\gamma_+}}  .\label{trace}
\end{align}
\end{lemma}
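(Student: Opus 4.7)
The plan is to derive both inequalities from the $L^1$-energy identity for $|f|$, exploiting the null-flux identity built into the choice of the wall Maxwellian. Since $f$ solves the pure transport equation along straight characteristics, $|f|$ satisfies $\partial_t|f|+v\cdot\nabla_x|f|=0$ in the distributional sense. Integrating this over $\O\times\R^3$ and applying the divergence theorem gives
\[
\frac{d}{ds}\|f(s)\|_{L^1_{x,v}}\;=\;\int_{\gamma_-}|f|\,|n\cdot v|\,\dd S_x\dd v\;-\;\int_{\gamma_+}|f|\,|n\cdot v|\,\dd S_x\dd v.
\]

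For (\ref{maximum}), I would apply the triangle inequality inside (\ref{diff_f}) to obtain, on $\gamma_-$, $|f(s,x,v)|\le c_\mu\mu(v)\,I(s,x)$ with $I(s,x):=\int_{n(x)\cdot u>0}|f(s,x,u)|\{n(x)\cdot u\}\dd u$. Plugging this into the incoming-flux term and using the null-flux identity $\int_{n(x)\cdot v<0}c_\mu\mu(v)|n(x)\cdot v|\dd v=1$ (which is precisely why $c_\mu=\sqrt{2\pi}$ is chosen in (\ref{diff_F})) yields $\int_{\gamma_-}|f||n\cdot v|\le\int_{\gamma_+}|f||n\cdot v|$. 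Hence $\frac{d}{ds}\|f(s)\|_{L^1_{x,v}}\le 0$, and integration from $t_*$ to $t$ gives (\ref{maximum}).

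For (\ref{trace}), integrating the same identity from $t_*$ to $t$ and dropping the nonnegative $\|f(t)\|_{L^1_{x,v}}$ yields
\[
\int_{t_*}^{t}|f(s)|_{L^1_{\gamma_+}}\dd s\;\le\;\|f(t_*)\|_{L^1_{x,v}}+\int_{t_*}^{t}|f(s)|_{L^1_{\gamma_-}}\dd s,
\]
so everything reduces to absorbing $\int_{t_*}^{t}|f|_{L^1_{\gamma_-}}\dd s$. The naive bound used for (\ref{maximum}) only reproduces $\int|f|_{L^1_{\gamma_+}}$ with constant $1$, which is useless, so I would split the outgoing $u$-integral defining $I(s,x)$ into a grazing/large-velocity bad set $\{|n(x)\cdot u|<\delta\text{ or }|u|>1/\delta\}$ and its complement. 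On the non-grazing, bounded-velocity good set, I would then trace the characteristic entering $\O$ through $(x,v)\in\gamma_-$ backward in time: it either reaches $t_*$, in which case the change-of-variables formula (\ref{COV}) converts the contribution into $\|f(t_*)\|_{L^1_{x,v}}$, or hits $\gamma_+$, in which case the boundary change-of-variables (\ref{COV_bdry}) converts it into an outgoing trace handled by an identical computation. On the bad set, the combined smallness of $|n\cdot u|$ and of the Gaussian tail $\int_{|u|>1/\delta}\mu(u)|n\cdot u|\dd u$ together with the extra factor $|n\cdot v|$ on $\gamma_-$ gives the claimed $O(\delta^2)$ factor multiplying $\int_{t_*}^{t}|f|_{L^1_{\gamma_+}}\dd s$, after which (\ref{trace}) follows.

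The principal obstacle is the clean handling of the grazing set: the Jacobian $|\tb|^{-4}|n(\xb)\cdot(x-\xb)|$ in (\ref{jacob:mapV}) degenerates as $\tb\to 0$, so one cannot naively convert near-grazing boundary integrals into interior integrals via (\ref{COV}). Two features save the argument. Strict convexity of $\O$ gives a quantitative lower bound on $\tb$ in terms of the normal velocity component, preventing pathological concentration near $\p\O$; and the Gaussian weight $c_\mu\mu(v)$ in the diffuse reflection tames the large-velocity part. Together they ensure that the bad-set contribution is genuinely quadratic in the cutoff parameter $\delta$, which is exactly the strength needed for (\ref{trace}) to be absorbed into the left-hand side when $\delta$ is small.
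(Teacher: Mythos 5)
Your proof of \eqref{maximum} is correct and essentially identical to the paper's: the $L^1$ balance law together with the triangle inequality in \eqref{diff_f} and the normalization $\int_{n(x)\cdot v<0}c_\mu\mu(v)|n(x)\cdot v|\,\dd v=1$ shows the incoming $|f|$-flux never exceeds the outgoing one.

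For \eqref{trace} there is a genuine gap, and it lies exactly where you place the cutoff. After reducing to the absorption of $\int_{t_*}^t|f|_{L^1_{\gamma_-}}$, you split the \emph{outgoing} velocity $u$ in $I(s,x)=\int_{n(x)\cdot u>0}|f(s,x,u)|\{n(x)\cdot u\}\,\dd u$ into a grazing/large-speed bad set and its complement. But the outgoing trace $f|_{\gamma_+}$ carries no Gaussian weight: the factor $c_\mu\mu$ in \eqref{diff_f} sits on the \emph{incoming} velocity, not on $u=v_1$. Hence $\int_{|u|>1/\delta}|f(s,x,u)|\{n(x)\cdot u\}\,\dd u$ has no reason to be small (the quantity $\int_{|u|>1/\delta}\mu(u)|n\cdot u|\,\dd u$ you invoke never appears), and $\int_{|n\cdot u|<\delta}|f(s,x,u)|\{n(x)\cdot u\}\,\dd u\le \delta\int|f(s,x,u)|\,\dd u$ is not controlled by $|f(s)|_{L^1_{\gamma_+}}$, which carries the weight $|n\cdot u|$. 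So the bad-set contribution is not $O(\delta^2)\int_{t_*}^t|f|_{L^1_{\gamma_+}}$. The good-set part is not closed either: ``hits $\gamma_+$ \dots handled by an identical computation'' is an infinite regress, since each pass through \eqref{COV_bdry} and the boundary condition reproduces the full outgoing flux with constant $1$ and no per-step gain.

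The paper extracts the smallness from the incoming side, and $\delta$ there is a \emph{time} cutoff, not a velocity cutoff. One decomposes the outgoing trace $|f(s,x,v)|$, $(x,v)\in\gamma_+$, by a single backward step: either the backward characteristic is still inside $\O$ at time $t-\delta$, and \eqref{COV} with \eqref{maximum} gives $\|f(t_*)\|_{L^1_{x,v}}$; or the particle entered through $\gamma_-$ after time $t-\delta$, so its crossing time satisfies $\tb(x,v)\le\delta$. Transporting this second piece to $\gamma_-$ via \eqref{COV_bdry} and \emph{only then} inserting the diffuse condition, the dangerous velocity integral becomes $\int_{n(y)\cdot v<0}\mathbf{1}_{\tf(y,v)\le\delta}\,c_\mu\mu(v)|n(y)\cdot v|\,\dd v$, which now does carry the wall Maxwellian; by \eqref{convex:1} the constraint $\tf\le\delta$ forces $|n(y)\cdot v|\lesssim\delta|v|^2$, coupling angle and speed so that the polar computation \eqref{estimate on delta} yields $O(\delta^2)$, with $\mu$ absorbing the $|v|^2$ growth. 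This one-step argument never iterates, which is what your sketch is missing.
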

\begin{proof}
The bound (\ref{maximum}) is from $ 
\| f(t) \|_{L^1_{x,v}} + \int^t_{t*} \iint_{\gamma_+}|f|-  \int^t_{t*} \iint_{\gamma_-}|f|  = \| f(t_*) \|_{L^1_{x,v}}$, and, due to the choice of $c_\mu$ in (\ref{diff_F}), we have
\Be\notag
\int^t_{t*} \iint_{\gamma_+}|f|-  \int^t_{t*} \iint_{\gamma_-}|f| 
= \int^t_{t*} \iint_{\gamma_+}|f|-\int^t_{t*} \Big|\iint_{\gamma_+}f\Big|
\geq  \int^t_{t*} \iint_{\gamma_+}|f|-  \int^t_{t*} \iint_{\gamma_+}|f|=0.
\Ee 

Next we work on (\ref{trace}) inspired by the proof of the $L^1$-trace theorem in \cite{GKTT2}. Choose $\delta \in (0, t-t_*)$. For $(x,v) \in \gamma_+$,  
\Be \label{|f|1}
|f(s,x,v)|\leq   \underbrace{ \mathbf{1}_{0 \leq s- (t-\delta)< \tb(x,v)}
|f(t-\delta, x- (s- (t-\delta)) v,v)| }_{\eqref{|f|1}_1} +  \underbrace{ \mathbf{1}_{s- (t-\delta) \geq \tb(x,v)}
|f(s- \tb(x,v), \xb(x,v), v)|}_{\eqref{|f|1}_2} . 
\Ee
From (\ref{COV}) and (\ref{maximum}), we have $\int^{t}_{t_*} \int_{\gamma_+} (\ref{|f|1})_1   \leq \| f(t-\delta) \|_{L^1_{x,v}}
\leq \| f(t_*) \|_{L^1_{x,v}}$. Now we consider $\eqref{|f|1}_2$. For $y= \xb(x,v)$, we have
$\mathbf{1}_{s- (t-\delta) \geq \tb(x,v)}
= \mathbf{1}_{s- (t-\delta) \geq \tf(y,v)}\leq 
\mathbf{1}_{\delta \geq \tf(y,v)}$ for $s \in [t_*,t]$. 
From the above inequality, further using the Fubini's theorem, (\ref{COV_bdry}), and (\ref{diff_f}) successively, we derive that 
\begin{align} 
&\int^{t}_{t_*} \int_{\gamma_+} (\ref{|f|1})_2   = \int_{\gamma_+}
\int^t_{ t-\delta + \tb(x,v)}|f(s- \tb(x,v), \xb(x,v), v)| \dd s 
\{n(x) \cdot v\} \dd S_x \dd v\notag \\
& \leq \int_{\p\O} \int_{n(y) \cdot v<0}
\mathbf{1} _{\delta> \tf(y,v)}
\int^t_{t-\delta}|f(s, y, v)| \dd s 
|n(y) \cdot v| \dd S_{y} \dd v\notag\\
&\leq \int_{\p\O}
\underbrace{\Big( \int_{n(y) \cdot v<0}
\mathbf{1} _{\delta> \tf(y,v)}c_\mu \mu(v) |n(y) \cdot v|\dd v\Big)}_{(\ref{est:|f|2})_*}
\int^t_{t-\delta}
\int_{n(y) \cdot v_1>0} 
|f(s,y, v_1)| \{n(y) \cdot v_1\} \dd v_1
 \dd s 
 \dd S_{y} .
 \label{est:|f|2}
\end{align}
From $
 |n(y) \cdot v|/|v|^2 \lesssim \tf(y,v)$, we note that $
 \mathbf{1}_{ |n(y) \cdot v| \lesssim \delta |v|^2} \geq \mathbf{1} _{\delta> \tf(y,v)}$. For $\vartheta$ being the angle between $v $ and $n(y)$, 
\begin{equation}
\begin{split} \label{estimate on delta}
    \int_{ |n(y) \cdot v| \lesssim \delta |v|^2} \mu(v ) \{ n(y) \cdot v  \} \dd v  & \leq \int_{ |n(y) \cdot v| \lesssim \delta |v|^2} \mu(v ) \delta |v |^2 
    \dd v , \ \ \ \ \text{by setting $r = |v |$},\\
    & \leq C \int^{\infty}_{0} \delta r^2 e^{- \frac{r^2}{2}} r^2 \dd r \int_{\cos \vartheta < \delta r} \sin \vartheta \dd \vartheta
  \leq C \int^{\infty}_{0} \delta r^2 e^{- \frac{r^2}{2}} r^2 \delta r \dd r \leq C \delta^2.
\end{split}
\end{equation} 
 Then, from \eqref{est:|f|2} and \eqref{estimate on delta}, we conclude $
\int^{t}_{t_*} \int_{\gamma_+} \eqref{|f|1}_2  \leq 
 \eqref{est:|f|2} \lesssim \delta^2  \int^t_{t_*} \int_{\gamma_+} |f|$.\end{proof}
 \begin{lemma}[Lemma 6 in \cite{Guo10}] For a strictly convex domain with a smooth boundary, 
  \Be\label{convex:1}
\max\{ |n(y) \cdot (y-z)|, |n(z) \cdot (y-z)|\}\lesssim |y-z|^2
 \ \text{for all} \  y , z \in \p\O. 
 \Ee
If we further assume that the domain is
strictly convex then there exists $C_\O>0$ such that 
 \Be\label{convex:2}
\min\big( |n(y) \cdot (y-z)|, |n(z) \cdot (y-z)|\big)\geq C_\O |y-z|^2
 \ \text{for all} \  y , z \in \p\O. 
 \Ee
\end{lemma}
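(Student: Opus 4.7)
The plan is to prove both estimates simultaneously through a second-order Taylor expansion of the defining function $\xi$, exploiting $\partial\Omega = \xi^{-1}(0)$ and the fact that the outward normal is $n(x) = \nabla\xi(x)/|\nabla\xi(x)|$. For $y, z \in \partial\Omega$, I would expand
\[
0 \;=\; \xi(z) \;=\; \xi(y) + \nabla\xi(y) \cdot (z-y) + \tfrac{1}{2} (z-y)^T D^2\xi(\bar y)(z-y),
\]
for some $\bar y$ on the segment $[y,z]$, and use $\xi(y) = 0$ to obtain the key identity
\[
\nabla\xi(y) \cdot (z-y) \;=\; -\tfrac{1}{2} (z-y)^T D^2\xi(\bar y)(z-y).
\]
Smoothness of $\partial\Omega$ guarantees that $\nabla\xi$ is continuous and nonvanishing on a neighborhood of $\partial\Omega$, so by compactness $|\nabla\xi| \sim 1$ there; in particular $|n(y) \cdot (z-y)|$ is comparable up to a bounded factor to the left-hand side of the identity.

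For the upper bound \eqref{convex:1}, smoothness of $\xi$ on a neighborhood of $\bar\Omega$ gives $\|D^2\xi\|_\infty \lesssim 1$ uniformly; dividing the identity by $|\nabla\xi(y)|$ immediately yields $|n(y) \cdot (z-y)| \lesssim |z-y|^2$. Swapping the roles of $y$ and $z$ controls $|n(z) \cdot (y-z)|$ in the same way, giving the $\max$ bound. For the lower bound \eqref{convex:2}, the strict convexity hypothesis $\sum_{i,j} \partial_i\partial_j \xi \, \zeta_i \zeta_j \gtrsim |\zeta|^2$ ensures $(z-y)^T D^2\xi(\bar y)(z-y) \gtrsim |z-y|^2$ uniformly in $\bar y$, so the identity yields $|n(y) \cdot (z-y)| \gtrsim |z-y|^2$; interchanging $y$ and $z$ handles the other term in the $\min$.

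The only mild technical point is ensuring that the uniform bounds on $D^2\xi$, on $|\nabla\xi|$ from both sides, and on the Hessian's ellipticity constant all hold at the mean-value point $\bar y$. Since $\xi$ is smooth on all of $\R^3$ and the segment $[y,z]$ stays in a compact tubular neighborhood of $\partial\Omega$ (the diameter of $\Omega$ provides an a priori bound on $|y-z|$), this is routine. I do not expect a genuine obstacle here: the argument is Taylor's theorem plus compactness, and the strict convexity assumption enters exactly once, to produce the coercivity needed for \eqref{convex:2}.
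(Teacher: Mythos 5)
Your argument is correct: the paper itself states this lemma without proof (quoting Lemma 6 of \cite{Guo10}), and your second-order Taylor expansion of the defining function $\xi$ along the chord $[y,z]$, combined with $|\nabla\xi|\sim 1$ on $\partial\O$ and the uniform two-sided bounds on $D^2\xi$ over the compact set containing the segment, is exactly the standard proof of that cited result. The only cosmetic imprecision is that the mean-value point $\bar y$ lies in $\bar\O$ (or its convex hull) rather than in a tubular neighborhood of $\partial\O$, but since $\xi$ is smooth on all of $\R^3$ and uniformly convex there, this changes nothing.
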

 
\section{Weighted $L^1$-Estimates}
The main purpose of this section to prove Proposition \ref{theorem:1}, which happens at the end of this section. We shall start it by settling one of the key cornerstones, Lemma \ref{lem:Doeblin}, the lower bound with the unreachable defect.
\begin{lemma}
\label{lem:Doeblin}
Suppose $f$ solve (\ref{eqtn_f}) with (\ref{diff_f}). Assume $f_0(x,v) \geq 0$ (no need of (\ref{cons_mass_f})). For any $T_0\gg1$ and $N\in \mathbb{N}$ there exists $\mathfrak{m}(x,v)\geq 0$, which only depends on $\O$ and $T_0$ (see (\ref{def:m}) for the precise form), such that 
\Be\label{est:Doeblin}
f(NT_0,x,v)\geq  
\mathfrak{m}(x,v) \Big\{
\iint_{\O \times \R^3}f((N-1)T_0,x,v) \dd v \dd x 
-  \iint_{\O \times \R^3} \mathbf{1}_{t_\mathbf{f}(x,v)\geq \frac{3T_0}{4}} f((N-1)T_0,x,v)  \dd v \dd x 
\Big\}.
\Ee
\end{lemma}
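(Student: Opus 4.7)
My strategy is to apply the stochastic-cycle identity of Lemma \ref{sto_cycle} with $t=NT_0$, $t_*=(N-1)T_0$, and the trivial weights $\varrho\equiv 1$, $w\equiv 1$, which kills both $\varrho'$-terms \eqref{expand_h21} and \eqref{expand_h2}. Since the diffuse-reflection transport equation propagates positivity ($f_0\geq 0\Rightarrow f(t)\geq 0$), all surviving pieces are non-negative and can be dropped. I keep only the $i=3$ summand from \eqref{expand_h22} with $k=4$,
\begin{equation*}
f(NT_0,x,v)\geq c_\mu\mu(v)\int_{\prod_{j=1}^{4}\mathcal{V}_j}\mathbf{1}_{t_4<t_*\leq t_3}\, f(t_*,x_3-(t_3-t_*)v_3,v_3)\,\dd\Sigma^4_3.
\end{equation*}
Two intermediate bounces is the minimal setup supplying enough boundary-surface degrees of freedom to produce a non-degenerate Doeblin lower bound.

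For $j=1,2$ I use Lemma \ref{lem:mapV} to change $v_j\mapsto(x_{j+1},\tau_{j+1})\in\p\O\times\R_+$, picking up the Jacobian $\tau_{j+1}^{-4}|n(x_{j+1})\cdot(x_j-x_{j+1})|$. For the remaining $v_3$-integration I keep $v_3$ free and instead reparametrize $(x_3,\tau_3)\mapsto y:=x_3-(t_3-t_*)v_3$ at fixed $v_3$, which gives $\dd S_{x_3}\,\dd\tau_3=|n(x_3)\cdot v_3|^{-1}\,\dd y$. The factors $n(x_j)\cdot v_j$ inside $\dd\Sigma^4_3$ combine with these Jacobians (the two instances of $|n(x_3)\cdot v_3|$ cancelling exactly) to yield
\begin{equation*}
f(NT_0,x,v)\geq c_\mu^3\mu(v)\iint_{\O\times\R^3}f(t_*,y,v_3)\bigg(\iint_{U(x,v,y,v_3)} K\,\dd S_{x_2}\,\dd\tau_2\bigg)\dd y\,\dd v_3,
\end{equation*}
where $K:=\mu(v_1)\mu(v_2)\tau_2^{-5}\tau_3^{-5}|n(x_1)\cdot(x_1-x_2)|\,|n(x_2)\cdot(x_1-x_2)|\,|n(x_2)\cdot(x_2-x_3)|\,|n(x_3)\cdot(x_2-x_3)|$, and the induced variables are $x_3=y+\tf(y,v_3)v_3$, $\tau_3=T_0-\tb(x,v)-\tau_2-\tf(y,v_3)$, and $v_j=(x_j-x_{j+1})/\tau_{j+1}$.

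To extract the Doeblin minorizer I localize the inner integration to the region on which both $\tau_2,\tau_3\in[T_0/C,T_0]$ for a large constant $C=C(T_0,\O)$, and $x_2\in\p\O\setminus(B_\epsilon(x_1)\cup B_\epsilon(x_3))$ for a small fixed $\epsilon=\epsilon(\O)>0$. Strict convexity \eqref{convex:2} gives $|n(x_j)\cdot(x_j-x_{j+1})|\geq C_\O\epsilon^2$; the chord-velocities $|v_j|\leq C\,\mathrm{diam}(\O)/T_0$ are bounded, whence $\mu(v_j)\gtrsim_{\O,T_0}1$; and $\tau_j^{-5}\geq T_0^{-5}$. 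Thus $K\gtrsim_{\O,T_0}1$ uniformly. Imposing the indicator $\mathbf{1}_{\{\tb(x,v)\leq T_0/8\}}$ on $(x,v)$ guarantees that whenever $\tf(y,v_3)<3T_0/4$, the affine relation $\tau_2+\tau_3=T_0-\tb(x,v)-\tf(y,v_3)$ admits a $\tau_2$-window of length $\gtrsim_{T_0}1$ inside $[T_0/C,T_0]$, and the $x_2$-region has surface measure at least $|\p\O|-2\pi\epsilon^2>0$, both uniformly in $(y,v_3)$. Defining $\mathfrak{m}(x,v):=c(\O,T_0)\,\mu(v)\,\mathbf{1}_{\{\tb(x,v)\leq T_0/8\}}$ therefore yields
\begin{equation*}
f(NT_0,x,v)\geq\mathfrak{m}(x,v)\iint_{\{\tf(y,v_3)<3T_0/4\}}f(t_*,y,v_3)\,\dd y\,\dd v_3,
\end{equation*}
which rearranges to \eqref{est:Doeblin}.

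The main technical hurdle is the uniform-in-$(y,v_3)$ lower bound on the inner $(x_2,\tau_2)$-integral. As $(y,v_3)$ sweeps the reachable set $\{\tf<3T_0/4\}$, the associated boundary point $x_3=y+\tf(y,v_3)v_3$ traces out all of $\p\O$, and both the excluded $\epsilon$-balls and the feasible $\tau_2$-window must remain non-degenerate uniformly. Two free intermediate bounces $(x_2,x_3)$ are essential here: a single intermediate point would collapse the $x_2$-integration whenever $x_3$ approaches $x_1$, making it impossible to extract a non-trivial $\mathfrak{m}$.
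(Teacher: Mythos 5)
Your proposal is correct and takes essentially the same route as the paper's proof: positivity of $f$, a single three-bounce term of the stochastic cycle expansion, the change of variables $v_j\mapsto(x_{j+1},\tau_{j+1})$ from Lemma \ref{lem:mapV}, strict convexity \eqref{convex:2} to bound the resulting Jacobian below after restricting to well-separated boundary points and flight times comparable to $T_0$, and a final conversion of the remaining integral into a volume integral over $\{\tf < \frac{3T_0}{4}\}$. The only differences are bookkeeping ones — the paper keeps the terminal term with $k=3$ and performs the last step via the variables $t_\pm$ and \eqref{COV} rather than your map $(x_3,\tau_3)\mapsto y$ at fixed $v_3$ — so the two arguments are the same in substance.
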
 
\begin{proof}
\textbf{Step 1.} It is standard to derive $f(t,x,v)\geq 0$ from the assumption $f_0(x,v) \geq 0$. For the proof we refer to the standard sequence argument in the proof of Theorem 1 in \cite{Guo10}. Together with (\ref{expand_h1})-(\ref{expand_h3}) for $t= NT_0$, $t_*= (N-1)T_0$, $k=3$, we can derive that 
\begin{align}
f(NT_0, x,v) 
&\geq \mathbf{1}_{ \tb(x,v) \leq \frac{T_0}{4}} c_\mu \mu(v) \int_{\mathcal{V}_1} 
 \int_{\mathcal{V}_2}
 \int_{\mathcal{V}_3}  
 \mathbf{1}_{t_3 \geq (N-1)T_0}
 f(t_3, x_3,v_3)  \{n(x_3) \cdot v_3\} \dd v_3
  \dd \sigma_2\dd \sigma_1.\label{Doeblin_1}
\end{align}
 
 Now applying Lemma \ref{lem:mapV} for $v_1 \in \mathcal{V}_1$ and $v_2 \in \mathcal{V}_2$ with (\ref{mapV}) and (\ref{jacob:mapV}), we derive that 
 \Be\label{Doeblin_2}
 \begin{split}
 (\ref{Doeblin_1}) & \geq
   \mathbf{1}_{ \tb(x,v) \leq \frac{T_0}{4}}  c_\mu \mu(v) 
 \int_0^{t-\tb(x,v)}
  \int_{\p\O} 
  \underbrace{ \frac{|n(x_2) \cdot (x_1-x_2)|}{|t_{\mathbf{b}, 1}|^4}  
   \frac{|n(x_1) \cdot (x_1-x_2)|}{ t_{\mathbf{b}, 1}  }  
   c_\mu \mu\Big( \frac{|x_1-x_2|}{t_{\mathbf{b}, 1} }\Big)}_{(\ref{Doeblin_2})_1}\\
  & \times 
     \int_0^{t-\tb(x,v)-t_{\mathbf{b},1}}
  \int_{\p\O}
 \underbrace{    \frac{|n(x_3) \cdot (x_2-x_3)|}{|t_{\mathbf{b}, 2}|^4}  
   \frac{|n(x_2) \cdot (x_2-x_3)|}{ t_{\mathbf{b}, 2}  }  
   c_\mu \mu\Big( \frac{|x_2-x_3|}{t_{\mathbf{b}, 2} }\Big)
   \mathbf{1}_{t_3\geq (N-1)T_0}}_{(\ref{Doeblin_2})_2}
   \\
   & \times 
   \int_{n(x_3) \cdot v_3>0}
   f(t_3,x_3,v_3) \{n(x_3) \cdot v_3\} \dd v_3
    \dd S_{x_3}\dd t_{\mathbf{b},2}
    \dd S_{x_2}\dd t_{\mathbf{b},1}, \ \text{where} \ t_3 = NT_0 - \tb(x,v) - t_{\mathbf{b},1}- t_{\mathbf{b},2}.
 \end{split}\Ee
 
 
 \textbf{Step 2.} 
To have a positive pointwise lower bound of the integrands of the first two lines of (\ref{Doeblin_2}) we will further restrict integration regimes. Note that $x_1= \xb(x,v)$ is given, and $x_2, x_3$ are free variables. Now we restrict the range of $x_2$ as, for $\delta>0$,  
 \Be\label{def:X_2}
\mathcal{X}_2^\delta := \{ x_2 \in \p\O: |x_1- x_2|> \delta \ \text{and} \  |x_2-x_3|> \delta \},
 \Ee where we pick $\delta$ such that $0 < \delta \ll |\p \O| < \infty$, we can  derive that $|\p \O|/2 \leq |\mathcal{X}_2^\delta| \leq |\p \O|$. 

 For two free variables $t_{\mathbf{b},1}$ and $t_{\mathbf{b},2}$ we use, only inside the proof of Lemma \ref{lem:Doeblin}, two free variables
 \Be\label{def:t+}
 t_+ = t_{\mathbf{b},1} + t_{\mathbf{b},2}\in [0, 
T_0 - \tb(x,v)
 ] \ \  \text{and} \  \ t_- = t_{\mathbf{b},1} - t_{\mathbf{b},2} \in [-(T_0 - \tb(x,v)), T_0 - \tb(x,v)]. 
 \Ee
 Note that the ranges come from $t_3 \geq (N-1) T_0$. Now we restrict the integral regimes of the new variables as 
 \Be\begin{split}\label{def:T+}
 \mathfrak{T}_+^{T_0}:=&
 \Big\{
 t_+ \in  [0, \infty):
 T_0- \tb(x,v) - \min\Big(\tb(x_3,v_3) ,
 \frac{T_0}{4}
\Big)
  \leq  t_+\leq 
 T_0- \tb(x,v)
 \Big\},\\
  \mathfrak{T}_-^{T_0}:= &
  \Big\{
  t_-\in \R:  | t_-| \leq T_0- \tb(x,v) - \min\Big(\tb(x_3,v_3) ,
 \frac{T_0}{4}
\Big)
  \Big\}.
 \end{split}\Ee
As a consequence of (\ref{def:T+}) we will derive  (\ref{min:tb1}) and (\ref{cond:tf}).
Firstly, from $\tb(x,v) \leq \frac{T_0}{4}$ in (\ref{Doeblin_2}) and (\ref{def:t+}) 
 \Be\label{min:tb1}
 \begin{split}
\min \big(t_{\mathbf{b},1}, t_{\mathbf{b},2}\big) &=\min \Big( \frac{ t_++ t_-  }{2}
, \frac{ t_+- t_-  }{2}
\Big)
\geq \frac{1}{2}  \{T_0 - \tb(x,v) - \frac{T_0}{4} - \frac{T_0}{4}\}
\geq \frac{T_0}{8},\\
\max \big(t_{\mathbf{b},1}, t_{\mathbf{b},2}\big) & =\max \Big( \frac{ t_++ t_-  }{2}
, \frac{ t_+- t_-  }{2}
\Big)\leq T_0.
\end{split}
 \Ee
Therefore, from \eqref{def:X_2} we exclude the case when $x_1, x_2, x_3$ are too close and from \eqref{def:T+} we exclude the case when either $t_{\mathbf{b},1}$ or $t_{\mathbf{b},2}$ is too small or too large.
\\
Secondly, we prove (\ref{cond:tf}). Note that if $t_+ \in \mathfrak{T}_+^{T_0}$ then 
 $(N-1)T_0
  \leq t_3=
 NT_0 - \tb(x,v) - t_+\leq 
 (N-1)T_0 
 +\min \{\tb(x_3,v_3) ,
 \frac{3T_0}{4}
\}$.
This implies that, 
\Be \begin{split}\label{cond:tf}
  \text{if} \  \ \tf(y,v_3)= t_3- (N-1)T_0= T_0 - \tb(x,v) -t_+ \in 
  \Big[0, \frac{3T_0}{4}\Big] \ 
  \ \text{then} \ \  y=X((N-1)T_0;t_3,x_3,v_3),
 \end{split}
\Ee
where we have use an observation $\tf(y,v_3) \leq \tb (x_3,v_3)$ since $x_3= \xf(y,v_3)$.

  \smallskip
  
 \textbf{Step 3.} For (\ref{Doeblin_2}), we adopt the new variables (\ref{def:t+}), and apply the restriction of integral regimes in (\ref{def:X_2}) and (\ref{def:T+}). 
Recall \eqref{convex:2} from the convexity of the domain. From (\ref{min:tb1}) and (\ref{convex:2}), we derive that 
$$
(\ref{Doeblin_2})_i  \geq \frac{C_\O|x_i-x_{i+1}|^2}{T_0^4}
\frac{C_\O|x_i-x_{i+1}|^2}{T_0 } \frac{1}{2 \pi} e^{- \frac{|x_i-x_{i+1}|^2}{2(T_0/8)^2}}
\geq \frac{C_\O^2\delta^4}{2 \pi T_0^5}e^{- \frac{32 \text{diam}(\O)^2}{T_0^2}}, \ \ \text{for} \ i=1,2.
 $$
Here $\text{diam}(\O)= \sup_{x,y \in \bar{\O}}|x-y|<\infty$. Finally we get 
 \begin{align}
 (\ref{Doeblin_2}) \geq  \mathbf{1}_{ \tb(x,v) \leq \frac{T_0}{4}} 
\frac{C_\O^4\delta^8}{(2 \pi)^2 T_0^{10}}e^{- \frac{64 \text{diam}(\O)^2}{T_0^2}}  &c_\mu
  \mu(v)  \int_{\p\O} \dd S_{x_3} \int_{n(x_3) \cdot v_3>0}
 \dd v_3\{n(x_3) \cdot v_3\}\int_{\mathfrak{T}_+^{T_0}} \dd t_+\notag \\
& \times
\int_{\mathcal{X}_2^\delta} \dd S_{x_2}
\int_{\mathfrak{T}_-^{T_0}} \dd t_-  
   f(
   NT_0 -\tb(x,v) - t_+
   ,x_3,v_3)  \notag\\
  \geq  \mathbf{1}_{ \tb(x,v) \leq \frac{T_0}{4}} 
\frac{C_\O^4\delta^8}{(2 \pi)^2 T_0^{10}}e^{- \frac{64 \text{diam}(\O)^2}{T_0^2}} &c_\mu
  \mu(v)
  |\mathcal{X}_2^\delta| T_0
   \int_{\p\O} \dd S_{x_3} \int_{n(x_3) \cdot v_3>0}\dd v_3 \{n(x_3) \cdot v_3\} 
  \notag
  \\
& \times
  \int^{ T_0- \tb(x,v)}_{   
 T_0- \tb(x,v) - \min\Big(\tb(x_3,v_3) ,
 \frac{T_0}{4}
\Big)} \dd t_+  f(
   NT_0 -\tb(x,v) - t_+
   ,x_3,v_3).\label{lower1}
 \end{align} 
 
Now we focus on the integrand of (\ref{lower1}). Note that 
$(NT_0 - \tb(x,v) - t_+)- (N-1)T_0= T_0 - \tb(x,v) - t_+
\in \Big[0, \min\Big(\tb(x_3,v_3) ,
 \frac{T_0}{4}
\Big)\Big]$. 
Therefore 
\Be\label{lower2}
(\ref{lower1})
   = 
    \int^{ T_0- \tb(x,v)}_{   
 T_0- \tb(x,v) - \min\{\tb(x_3,v_3) ,
 \frac{T_0}{4}
\}} f((N-1)T_0, x_3 - (T_0-\tb(x,v) - t_+)v_3,v_3) \dd t_+.
\Ee
Note that, from (\ref{cond:tf}), $\tf( x_3 - (T_0-\tb(x,v) - t_+)v_3,v_3) \in \big[0, \frac{3T_0}{4}\big]$. Now applying (\ref{COV}), we conclude that 
 \Be
 (\ref{Doeblin_2})\geq 
  \mathbf{1}_{ \tb(x,v) \leq \frac{T_0}{4}} 
\frac{C_\O^4\delta^8}{(2 \pi)^2 T_0^{10}}e^{- \frac{64 \text{diam}(\O)^2}{T_0^2}}  c_\mu
  \mu(v)
  |\mathcal{X}_2^\delta| T_0
  \iint_{\O \times \R^3} \mathbf{1}_{\tf(y,v)  \in [0, \frac{3T_0}{4}]} 
  f((N-1)T_0, y,v) \dd v \dd y.  \notag
 \Ee
We conclude (\ref{est:Doeblin}) by setting 
 \Be\label{def:m}
 \mathfrak{m} (x,v):=  \mathbf{1}_{ \tb(x,v) \leq \frac{T_0}{4}} 
 (2 \pi)^{-2}{C_\O^4\delta^8} T_0^{-9}\exp(-  64 \text{diam}(\O)^2T_0^{-2}) 
  |\mathcal{X}_2^\delta|  c_\mu
  \mu(v)
 .
  \Ee Recall that $\mathcal{X}_2^\delta$ and $\delta$ is defined in (\ref{def:X_2}).
\end{proof}
 
 An immediate consequence of Lemma \ref{lem:Doeblin}, as in \cite{Bernou}, follows.
  \begin{proposition}\label{prop:Doeblin}
 Suppose $f$ solve \eqref{eqtn_f} and (\ref{diff_f}), and satisfy (\ref{cons_mass_f}). Then for all $T_0\gg1$, $0<\delta\ll 1$, and $N \in \mathbb{N}$ 
  \Be\label{L1_coerc}
   \|f(NT_0)\|_{L^1_{x,v}}  \leq  (1-\| \mathfrak{m}\|_{L^1_{x,v}} )   \|f((N-1)T_0)\|_{L^1_{x,v}} 
   + 2 \| \mathfrak{m}\|_{L^1_{x,v}}  \| \mathbf{1}_{\tf\geq \frac{3T_0}{4}} f((N-1)T_0)\|_{L^1_{x,v}} .
 \Ee Here, with $\mathcal{X}_2^\delta$ in \eqref{def:X_2},
 \Be\label{est:m}
  \| \mathfrak{m} \|_{L^1_{x,v}}= \delta_{\mathfrak{m},T_0}\sim(2 \pi)^{-2} C_\O^4\delta^8T_0^{-8} \exp(-  64 \text{diam}(\O)^2T_0^{-2})   |\mathcal{X}_2^\delta| |\p\O|. 
 \Ee
 \end{proposition}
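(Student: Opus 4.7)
The plan is to run a classical Doeblin-type contraction on the positive and negative parts of $f((N-1)T_0)$, applying Lemma \ref{lem:Doeblin} to each and exploiting the zero-mass condition \eqref{cons_mass_f}. First I would split $f((N-1)T_0,x,v)=f_+-f_-$ into its positive and negative parts. By linearity of \eqref{eqtn_f}--\eqref{diff_f}, let $f^{\pm}(t,x,v)$ denote the solutions launched from the non-negative data $f_\pm$ at time $(N-1)T_0$; the positivity-preservation recalled in Step 1 of the proof of Lemma \ref{lem:Doeblin} gives $f^{\pm}\ge 0$, and by linearity $f(NT_0)=f^{+}(NT_0)-f^{-}(NT_0)$. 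The zero-mass condition \eqref{cons_mass_f} forces $\|f_+\|_{L^1_{x,v}}=\|f_-\|_{L^1_{x,v}}=\tfrac12\|f((N-1)T_0)\|_{L^1_{x,v}}$.

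Next I would apply Lemma \ref{lem:Doeblin} (shifted in time by $(N-1)T_0$) separately to each $f_\pm$ to obtain
\[
f^{\pm}(NT_0,x,v)\ge \mathfrak{m}(x,v)\Big(\|f_\pm\|_{L^1_{x,v}}-\|\mathbf{1}_{\tf\ge\frac{3T_0}{4}} f_\pm\|_{L^1_{x,v}}\Big).
\]
Using the pointwise identity $|a-b|=a+b-2\min(a,b)$, the equality $\|f_+\|_{L^1_{x,v}}=\|f_-\|_{L^1_{x,v}}$, and $\max_\pm\|\mathbf{1}_{\tf\ge 3T_0/4}f_\pm\|_{L^1_{x,v}}\le \|\mathbf{1}_{\tf\ge 3T_0/4}|f((N-1)T_0)|\|_{L^1_{x,v}}$, I arrive at the pointwise bound
\[
|f(NT_0,x,v)|\le f^{+}(NT_0,x,v)+f^{-}(NT_0,x,v)-\mathfrak{m}(x,v)\|f((N-1)T_0)\|_{L^1_{x,v}}+2\mathfrak{m}(x,v)\|\mathbf{1}_{\tf\ge\frac{3T_0}{4}}f((N-1)T_0)\|_{L^1_{x,v}}.
\]
Integrating in $(x,v)$ and invoking the $L^1$-contraction \eqref{maximum} separately to $f^{\pm}$ (so that $\|f^{+}(NT_0)\|_{L^1_{x,v}}+\|f^{-}(NT_0)\|_{L^1_{x,v}}\le\|f_+\|_{L^1_{x,v}}+\|f_-\|_{L^1_{x,v}}=\|f((N-1)T_0)\|_{L^1_{x,v}}$) produces \eqref{L1_coerc} exactly.

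For the norm estimate \eqref{est:m}, I integrate the explicit formula \eqref{def:m}: the velocity-space integral of $c_\mu\mu(v)$ against the spatial indicator $\mathbf{1}_{\tb(x,v)\le T_0/4}$ is converted through the change of variables \eqref{COV} (with $x=y-sv$, $y\in\p\O$, $s\in[0,\tb]$) into $\int_{\gamma_+}\min(\tb(x,v),T_0/4)\,c_\mu\mu(v)|n(x)\cdot v|\,\dd v\,\dd S_x \lesssim T_0|\p\O|$, which combined with the $T_0^{-9}$-prefactor in \eqref{def:m} produces the claimed $T_0^{-8}|\p\O|$-scaling. The main obstacle is handling the sign-change of $f$: the positive/negative-part splitting and the $\min$-manipulation must be executed carefully so that the two stray terms $\|\mathbf{1}_{\tf\ge 3T_0/4}f_\pm\|_{L^1_{x,v}}$ recombine into the single clean quantity $\|\mathbf{1}_{\tf\ge 3T_0/4}f((N-1)T_0)\|_{L^1_{x,v}}$ on the right-hand side, and that the equality $\|f_+\|_{L^1_{x,v}}=\|f_-\|_{L^1_{x,v}}$ (from \eqref{cons_mass_f}) is fully exploited to turn $2\mathfrak{m}\min(\cdot)$ into $\mathfrak{m}\|f\|_{L^1_{x,v}}$ minus the acceptable error.
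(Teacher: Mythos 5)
Your proposal is correct and follows essentially the same route as the paper: the positive/negative-part splitting with solutions launched from $f_{N-1,\pm}$, the application of Lemma \ref{lem:Doeblin} to each, the use of \eqref{cons_mass_f} to get $\|f_\pm\|_{L^1_{x,v}}=\tfrac12\|f((N-1)T_0)\|_{L^1_{x,v}}$, the $|a-b|\le a+b-2\mathfrak{l}$ manipulation with a common lower bound $\mathfrak{l}\le\min(f^+,f^-)$, and the computation of $\|\mathbf{1}_{\tb\le T_0/4}c_\mu\mu\|_{L^1_{x,v}}\sim T_0|\p\O|$ via \eqref{COV} for \eqref{est:m} are all exactly the paper's steps.
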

 \begin{proof}
 Decompose 
 \begin{align*}
 f((N-1)T_0,x,v) &=
 f_{ N-1 ,+}(x,v) - f_{ N-1 ,-}(x,v)\\
& := \mathbf{1}_{f((N-1)T_0,x,v)  \geq 0} |f((N-1)T_0,x,v) |- \mathbf{1}_{f((N-1)T_0,x,v)  < 0} |f((N-1)T_0,x,v) |.\end{align*}
Let $f_{\pm}(s,x,v)$ solve (\ref{eqtn_f}) for $s \in [ (N-1)T_0,NT_0]$ with the initial data $f_{N-1,+}$ and $f_{N-1,-}$ at $s=(N-1)T_0$, respectively. Now we apply Lemma \ref{lem:Doeblin} to each $f_{\pm}(t,x,v)$ and conclude (\ref{est:Doeblin}) for both $f= f_+$ and $f=f_-$ respectively. We also note that $\iint_{\O \times \R^3} f((N-1)T_0,x,v) \dd x \dd v=\iint_{\O \times \R^3} f_{N-1,+}(x,v) \dd x \dd v- \iint_{\O \times \R^3} f_{N-1,-}(x,v) \dd x \dd v =0$ implies $\iint_{\O \times \R^3} f_{N-1, \pm}( x,v)  \dd x \dd v =\frac{1}{2} \iint_{\O \times \R^3} |f((N-1)T_0,x,v)| \dd x \dd v$. Then we derive that 
 \begin{align}
 f_{\pm}(NT_0,x,v) &\geq \mathfrak{m}(x,v) \iint f_{N-1,\pm}(x,v) \dd x \dd v -  
  \mathfrak{m}(x,v) \iint_{\O \times \R^3} \mathbf{1}_{\tf(x,v) \geq \frac{3T_0}{4}} f_{N-1,\pm} (x,v ) \dd x \dd v\notag \\
\geq \mathfrak{l}(x,v) :=& \frac{\mathfrak{m}(x,v)}{2}\iint_{\O \times \R^3} |f((N-1)T_0  )|  
  - \mathfrak{m}(x,v) \iint_{\O \times \R^3} \mathbf{1}_{\tf(x,v) \geq \frac{3T_0}{4}} |f((N-1)T_0 )|.\label{lowerB:f}
 \end{align}  
 Then we deduce that 
\Be \notag
 \begin{split}
 |f(NT_0,x,v)| 
& = |f_+(NT_0,x,v)- \mathfrak{l}(x,v) - f_-(NT_0,x,v) + \mathfrak{l}(x,v)|
\\& \leq  |f_+(NT_0,x,v)- \mathfrak{l}(x,v)| +| f_-(NT_0,x,v)- \mathfrak{l}(x,v)| 
 \leq   f_+(NT_0,x,v)+f_-(NT_0,x,v) -2 \mathfrak{l}(x,v).
 \end{split}
 \Ee
Note that $f_+(NT_0,x,v)+f_-(NT_0,x,v)$ solves (\ref{eqtn_f}) with the initial datum $f_{N-1,+} + f_{N-1,-}=|f((N-1)T_0,x,v)|$ at $(N-1)T_0$. Then using (\ref{cons_mass_f}) and taking an integration to (\ref{lowerB:f}) over $\O \times \R^3$, we derive (\ref{L1_coerc}).

For (\ref{est:m}) it suffices to bound $\| \mathbf{1}_{\tb(x,v) \leq \frac{T_0}{4}}  
c_\mu  \mu(v)\|_{L^1_{x,v}}$. From (\ref{COV}) and $\tb(x-sv,v) = \tb(x,v)-s$, 
\Be\notag
\begin{split}
&\| \mathbf{1}_{\tb(x,v) \leq \frac{T_0}{4}}  c_\mu \mu(v) \|_{L^1_{x,v}}  
  =  \int_{\p\O } \int_{n(x) \cdot v>0} \int_{ \max\{0,\tb(x,v)- \frac{T_0}{4}\}}  ^{\tb(x,v)} 
 c_\mu \mu(v) \{n(x) \cdot v\}\dd s \dd v \dd S_x \\
  &=  \int_{\p\O } \int_{n(x) \cdot v>0} 
\Big(  \mathbf{1}_{\tb(x,v) \leq \frac{T_0}{4}} \int_0^{\tb(x,v)}    \dd s
+\mathbf{1}_{\tb(x,v) \geq \frac{T_0}{4}}\int^{\tb(x,v)}_{\tb(x,v)- \frac{T_0}{2}}  \dd s \Big) 
 c_\mu \mu(v) \{n(x) \cdot v\}\dd v \dd S_x \sim T_0 |\p\O|.  
\end{split}
\Ee
Combining the above bound with (\ref{def:m}), we conclude (\ref{est:m}).
\end{proof}

Next, we prove an important result, Lemma \ref{lemma:energy}, which will be used frequently in this paper.
\hide
\begin{lemma}\label{lemma:energy}
Suppose $\varphi (\tau ) \geq0$, $\varphi^\prime \geq0$, and 
\Be\label{cond:varphi} 
\int_1^\infty \tau^{-5}\varphi(\tau) \dd \tau< \infty. 
\Ee
Suppose $f$ solve (\ref{eqtn_f}) and (\ref{diff_f}). Then there exists $C>0$ such that for all $0 \leq t_* \leq t$, 
\Be
\begin{split}
\label{energy_varphi}
  \| \varphi(\tf) f(t) \|_{L^1_{x,v}}
  + \int^{t}_{t_*}
  \| \varphi^\prime(\tf) f  \|_{L^1_{x,v}} 
  +  \int_{t_*}^{t} | \varphi(\tf) f|_{L^1_{\gamma_+}}
  -  \frac{1}{4}  \int^{t}_{t_*} |f |_{L^1_{\gamma_+}}  
  \leq     \| \varphi(\tf) f(t_*) \|_{L^1_{x,v}}+
 C \| f(t_*) \|_{L^1_{x,v}}.
\end{split}
\Ee 
\end{lemma}\unhide
 \begin{proof}[\textbf{Proof of Lemma \ref{lemma:energy}}] 
 Note that in the sense of distribution 
$[\p_t + v\cdot \nabla_x](\varphi(\tf) |f|) = \varphi^\prime(\tf) v\cdot \nabla_x \tf |f|
= -\varphi^\prime(\tf)| f|$. 
From this equation and (\ref{diff_f}), we derive that 
\begin{align}
&\| \varphi(\tf) f (t) \|_{L^1_{x,v}} + \int^t_{t_*} \| \varphi^\prime(\tf) f (s) \|_{L^1_{x,v}}
+ \int^t_{t_*} \int_{\gamma_+} \varphi(\tf) |f| \dd v \dd S_x
\leq  \ \| \varphi(\tf) f (t_*) \|_{L^1_{x,v}}
\notag
\\&
+ \int^t_{t_*} 
\int_{\p\O}
\int_{n(x) \cdot v<0}
 \varphi(\tf) c_{\mu}\mu(v)|n(x) \cdot v|
\int_{n(x)\cdot v_1>0}| f(s,x,v_1)| 
\{n(x) \cdot v_1\}\dd v_1
\dd v
\dd S_x \dd s
.\label{rho.f}
\end{align}
We only need to consider (\ref{rho.f}) with the corresponding $\varphi(\tf)$. We prove the following claim: If (\ref{cond:varphi}) holds then $\sup_{x \in \p\O} \int_{n(x) \cdot v<0}
 \varphi(\tf)(x,v) c_{\mu}\mu(v)|n(x) \cdot v| \dd v\lesssim 1.$ From the claim (\ref{trace}), we conclude (\ref{energy_varphi}), through, for $C>1$,   
\Be
\begin{split}\notag
(\ref{rho.f})  \leq C \int^t_{t_*} 
\int_{\gamma_+} |f(s,x,v_1)| \{n(x) \cdot v_1\} \dd v_1 \dd S_x
\dd s  \leq C \| f((N-1)T_0) \|_{L^1_{x,v}} + \frac{1}{4}  \int^{NT_0}_{(N-1)T_0} |f(s)|_{L^1(\gamma_+)}.
\end{split}\Ee 

For $0<\delta\ll1$, we split $\int_{n(x) \cdot v<0}
 \varphi(\tf)(x,v) c_{\mu}\mu(v)|n(x) \cdot v| \dd v$ into two parts: integration over the regimes of $\tf\leq\delta$ and $\tf>\delta$ respectively. When $\tf\leq\delta$, from \eqref{convex:1}, we derive that $|n(x) \cdot v|/|v|^2
 \lesssim \tf \leq \delta$. Then we bound 
  \Be
 \begin{split}\label{int:rho_1}
 \int_{n(x) \cdot v<0} \mathbf{1}_{\tf\leq\delta}
  \varphi(\tf)(x,v) c_{\mu}\mu(v)|n(x) \cdot v| \dd v
  \lesssim \delta\varphi(\delta) \int_{\R^3} 
|v|^2  \mu(v) 
   \dd v \lesssim 1. 
\end{split} \Ee
 Now we focus on the integration over the regimes of $\tf>\delta$. From (\ref{mapV_f}) we derive that $\int_{n  \cdot v<0} \varphi(\tf)  c_{\mu}\mu(v)|n  \cdot v| \dd v$ equals 
\Be \label{int:rho_2} 
c_\mu\int_{\p\O} \int_\delta^\infty 
\varphi(\tf) \mu\Big( \frac{|x-\xf|}{\tf}\Big) \frac{|n(x) \cdot (x-\xf)|^2}{|\tf|^5}  
\dd \tf \dd S_{\xf}. 
\Ee
From (\ref{convex:1}) and (\ref{cond:varphi}), we derive that 
 $(\ref{int:rho_2})   \lesssim 
\int_\delta^\infty
\frac{\varphi(\tf)}{|\tf|^5} 
\int_{\p\O} | x-\xf |^4 e^{-\frac{|x-\xf|^2}{2|\tf|^2}}\dd S_{\xf}
\dd \tf
\lesssim \int_\delta^\infty \frac{\varphi(\tf)}{|\tf|^5} \dd \tf\lesssim 1$. Together with above bound and (\ref{int:rho_1}) we prove our claim. \end{proof}
We will use the following $\varphi$'s inspired from \cite{Bernou}. 
 \begin{definition}\label{def:varphis}
 For $\delta>0$,
  \Be\label{varphis}
  \begin{split}
 \varphi_0(\tau)&:=(\ln (e+1))^{-1} \ln(e + \ln (e+ \tau))
 , \  \ 
 \varphi_1(\tau):= (e\ln (e+1))^{-1}(e+ \tau) \ln(e + \ln (e+ \tau)), \\
\varphi_3 (\tau)&:=e^{-3}(\tau+e)^3  \big(\ln (\tau+e)\big)^{-(1+\delta)}, \ \ 
\varphi_4 (\tau):=e^{-4}(\tau+e)^4  \big(\ln (\tau+e)\big)^{-(1+\delta)} .
 \end{split}
 \Ee First, we check $\varphi_i$ satisfies (\ref{cond:varphi}) for $i=0,1,2,3,4$: for example, for $\delta>0$
$\int_1^\infty  \tau^{-5}e^{-4}(\tau+e)^4  \big(\ln (\tau+e)\big)^{-(1+\delta)}  \dd \tau
\lesssim 1+ \int_{10}^\infty (\tau+e)^{-1}\big(\ln (\tau+e)\big)^{-(1+\delta)}  \dd \tau \lesssim  \int_1^\infty \frac{1}{s^{1+\delta}}\dd s$
, with $s= \ln(\tau+e)$.

 \hide
 Clearly from (\ref{def:log'}) all the functions satisfy (\ref{cond:varphi}) and
 \begin{align}\label{varphis'}
  \varphi_1^\prime (\tau)=  \mathfrak{log}(\tau)
+ \mathbf{1}_{\tau \geq e^2} + \mathbf{1}_{\tau < e^2} e^{-2} \tau , \ \ 
  \varphi_4^\prime (\tau)= 4 \tau^3 \mathfrak{log}(\tau)
+ \mathbf{1}_{\tau \geq e^2} 4 \tau^3+ \mathbf{1}_{\tau < e^2} 4e^{-2} \tau^4.
 \end{align}
\unhide
Second, we notice that 
 \Be\label{varphi|0}
 \varphi_i (0)=1 \ \ \text{for} \ i=0,1,3,4.
 \Ee
 
Finally, we check
   \Be \label{phi'}
   \begin{split}
&   \varphi_1^\prime(\tau)   =
   (e \ln (e+1))^{-1} \{\ln ( e + \ln ( e + \tau)) +   ( e + \ln ( e + \tau))^{-1}\}
   \geq    (e \ln (e+1))^{-1} \varphi_0 (\tau),
   \ \
    \varphi_0^\prime(\tau) \geq0,
\\& \varphi_4^\prime(\tau) 
=\big(4- \frac{1+\delta}{\ln (\tau+e)}\big)e^{-4} (\tau+e)^3  \big(\ln (\tau+e)\big)^{-(1+\delta)}  \geq 
 \varphi_3 (\tau),\ \
 \varphi_3^\prime (\tau) 
 \geq 0.
\end{split}   \Ee 

 \end{definition}

 \begin{proposition}\label{prop:energy}Choose $T_0>10$ such that \Be\label{cond:T0}
  4 C(2+ T_0) T_0^{-1}   \Big( \varphi_i(\frac{3T_0}{4})\Big)^{-1}
\leq \frac{1}{2} \ \ \text{for } \  i=0,3.
\Ee
For all $N \in \mathbb{N}$ and $i \in \{1,4\}$, 
   \Be\label{energyi}
   \begin{split}   
    & \| f(NT_0) \|_{L^1_{x,v}}
    +    \frac{ 4\delta_{\mathfrak{m}, T_0} }{ \varphi_{i-1} (\frac{3T_0}{4})}
    \Big\{
 \| \varphi_{i-1} (\tf) f  (NT_0)\|_{L^1_{x,v}}  +    \frac{ 1}{T_0 }\| \varphi_i(\tf) f(NT_0) \|_{L^1_{x,v}}
 +   \frac{1}{2T_0  }\int^{NT_0}_{(N-1)T_0} |f|_{L^1_{\gamma_+}}\Big\}
 \\
  \leq& \  \eqref{energyi}_* \times 
    \|f((N-1)T_0)\|_{L^1_{x,v}}  
    + \frac{4\delta_{\mathfrak{m}, T_0} }{ \varphi_{i-1} (\frac{3T_0}{4})}
    \Big\{ \frac{3}{4} \|   \varphi_{i-1}(\tf)f((N-1)T_0)\|_{L^1_{x,v}} +  \frac{1 }{T_0  }
 \| \varphi_i(\tf) f((N-1)T_0) \|_{L^1_{x,v}}\Big\},   
   \end{split}
   \Ee
 with $\eqref{energyi}_*:= (1-  \delta_{\mathfrak{m},T_0} \{
1
 -  \frac{ 4 C(2+ T_0)  }{T_0 \varphi_{i-1}(\frac{3T_0}{4})} \}
 )$ where $\delta_{m,T_0}$ is defined in \eqref{est:m}. 
\end{proposition}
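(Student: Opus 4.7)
The plan is to combine three inputs: the $L^1$-contraction from Proposition~\ref{prop:Doeblin}; Lemma~\ref{lemma:energy} applied forward with $\varphi=\varphi_i$ on $[(N-1)T_0,NT_0]$; and the same lemma applied \emph{backwards} with $\varphi=\varphi_{i-1}$ on subintervals $[s,NT_0]$, used to convert the $\varphi_{i-1}$-weighted dissipation produced by the forward step into a pointwise quantity at time $NT_0$. Throughout I write $\alpha_i:=4\delta_{\mathfrak{m},T_0}/\varphi_{i-1}(\tfrac{3T_0}{4})$ and $B:=\int_{(N-1)T_0}^{NT_0}|f|_{L^1_{\gamma_+}}$. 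I deliberately avoid invoking Lemma~\ref{lemma:energy} forward with the lighter weight $\varphi_{i-1}$: that naive route only gives coefficient $1$ on $\|\varphi_{i-1}(\tf)f((N-1)T_0)\|_{L^1_{x,v}}$, whereas the target requires $\tfrac34$; the contraction has to come through the stronger dissipation of the heavier weight $\varphi_i$.

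First, since $\varphi_{i-1}$ is nondecreasing, $\mathbf{1}_{\tf\geq 3T_0/4}\leq \varphi_{i-1}(\tf)/\varphi_{i-1}(\tfrac{3T_0}{4})$, so \eqref{L1_coerc} upgrades to
\begin{equation*}
\|f(NT_0)\|_{L^1_{x,v}} \leq (1-\delta_{\mathfrak{m},T_0})\|f((N-1)T_0)\|_{L^1_{x,v}} + \tfrac{\alpha_i}{2}\|\varphi_{i-1}(\tf)f((N-1)T_0)\|_{L^1_{x,v}}.
\end{equation*}
Second, I apply Lemma~\ref{lemma:energy} with $\varphi=\varphi_i$: the boundary flux on $\gamma_+$ collapses to $\tfrac34 B$ because $\varphi_i(0)=1$ by \eqref{varphi|0}, and the dissipation $\int\|\varphi_i'(\tf)f\|_{L^1_{x,v}}\dd s$ is bounded below by $\int\|\varphi_{i-1}(\tf)f\|_{L^1_{x,v}}\dd s$ using $\varphi_i'\geq \varphi_{i-1}$ from \eqref{phi'} (absorbing any absolute constant into $C$), giving
\begin{equation*}
\|\varphi_i(\tf)f(NT_0)\|_{L^1_{x,v}} + \int_{(N-1)T_0}^{NT_0}\|\varphi_{i-1}(\tf)f\|_{L^1_{x,v}}\dd s + \tfrac34 B \leq \|\varphi_i(\tf)f((N-1)T_0)\|_{L^1_{x,v}} + C\|f((N-1)T_0)\|_{L^1_{x,v}}.
\end{equation*}

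Third, and this is the crux, I apply Lemma~\ref{lemma:energy} backwards with $\varphi=\varphi_{i-1}$ on each $[s,NT_0]\subset[(N-1)T_0,NT_0]$. Dropping the nonnegative dissipation and the nonnegative boundary contribution $\tfrac34\int_s^{NT_0}|f|_{L^1_{\gamma_+}}\geq 0$ (again by $\varphi_{i-1}(0)=1$), then using the maximum principle \eqref{maximum} to bound $\|f(s)\|_{L^1_{x,v}}\leq \|f((N-1)T_0)\|_{L^1_{x,v}}$, I obtain
\begin{equation*}
\|\varphi_{i-1}(\tf)f(s)\|_{L^1_{x,v}} \geq \|\varphi_{i-1}(\tf)f(NT_0)\|_{L^1_{x,v}} - C\|f((N-1)T_0)\|_{L^1_{x,v}}.
\end{equation*}
Integrating this over $s\in[(N-1)T_0,NT_0]$ converts the time-integral dissipation appearing in the second step into a pointwise quantity:
\begin{equation*}
\int_{(N-1)T_0}^{NT_0}\|\varphi_{i-1}(\tf)f\|_{L^1_{x,v}}\dd s \geq T_0\|\varphi_{i-1}(\tf)f(NT_0)\|_{L^1_{x,v}} - CT_0\|f((N-1)T_0)\|_{L^1_{x,v}}.
\end{equation*}

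Finally I substitute this lower bound into the $\varphi_i$-energy inequality, multiply the result by $\alpha_i/T_0$, and add the Doeblin inequality from the first step. Bookkeeping the coefficients: the $\|f((N-1)T_0)\|_{L^1_{x,v}}$-coefficient is $1-\delta_{\mathfrak{m},T_0}+C\alpha_i(T_0+1)/T_0$, which is bounded by $\eqref{energyi}_*=1-\delta_{\mathfrak{m},T_0}+C\alpha_i(T_0+2)/T_0$; the $\|\varphi_{i-1}(\tf)f((N-1)T_0)\|_{L^1_{x,v}}$-coefficient is $\alpha_i/2\leq 3\alpha_i/4$; the $\|\varphi_i(\tf)f((N-1)T_0)\|_{L^1_{x,v}}$-coefficient matches $\alpha_i/T_0$ exactly; and $\tfrac{3\alpha_i}{4T_0}B\geq \tfrac{\alpha_i}{2T_0}B$. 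Thus \eqref{energyi} follows. Note that hypothesis \eqref{cond:T0} is not actually used to derive \eqref{energyi} itself; it is reserved for the subsequent iteration where it guarantees $\eqref{energyi}_*<1-\tfrac12\delta_{\mathfrak{m},T_0}$, i.e., strict contraction per step. The step I expect to require the most care is the backward use of Lemma~\ref{lemma:energy} in Step~3: this device is what allows one to recover $\|\varphi_{i-1}(\tf)f(NT_0)\|_{L^1_{x,v}}$ on the left from a dissipation integral without spending anything on the $\varphi_{i-1}$ side at time $(N-1)T_0$, and it is the sole reason the improved factor $3/4$ can appear on the right.
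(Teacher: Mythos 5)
Your proposal is correct and follows essentially the same route as the paper: the upgraded Doeblin inequality \eqref{L1_coerc'}, the forward energy estimate with the heavy weight $\varphi_i$ combined with $\varphi_i'\gtrsim\varphi_{i-1}$, the conversion of the dissipation $\int\|\varphi_{i-1}(\tf)f\|_{L^1_{x,v}}\dd s$ into $T_0\|\varphi_{i-1}(\tf)f(NT_0)\|_{L^1_{x,v}}-CT_0\|f((N-1)T_0)\|_{L^1_{x,v}}$ via Lemma \ref{lemma:energy} on $[s,NT_0]$ together with \eqref{maximum}, and the final weighted sum with factor $4\delta_{\mathfrak{m},T_0}/(T_0\varphi_{i-1}(\tfrac{3T_0}{4}))$ are exactly the paper's \eqref{energy:log}--\eqref{L1_coerc'}. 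The only (harmless) deviations are that you omit the extra copy of the $\varphi_{i-1}$-energy inequality at $t_*=(N-1)T_0$ in the final sum, which the paper includes and which is why its coefficient $\tfrac{\alpha}{2}+\tfrac{\alpha}{T_0}\le\tfrac{3\alpha}{4}$ uses $T_0>10$ rather than \eqref{cond:T0}, and your correct side remark that \eqref{cond:T0} is only needed for the subsequent contraction step.
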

 \begin{proof} 
 As key steps we will repeatedly apply Lemma \ref{lemma:energy} with $\varphi_i$'s in (\ref{varphis}). Applying Lemma \ref{lemma:energy} to $f(t,x,v)$, solving (\ref{eqtn_f}) and (\ref{diff_f}), with $\varphi_i$ for $i=0,1,4$ in (\ref{varphis}), and using \eqref{varphi|0}, 
we derive that, for $i=1,4$, 
\begin{align}
  \|  \varphi_{i-1}(\tf) f(NT_0) \|_{L^1_{x,v}} +   \frac{3}{4} \int^{NT_0}_{t_*} |f|_{L^1_{\gamma_+}}  \leq  \|  \varphi_ {i-1}(\tf)f(t_*) \|_{L^1_{x,v}}  + C\| f(t_*) \|_{L^1_{x,v}} , \ \ \text{for} \  (N-1)T_0 \leq t_*\leq NT_0,\label{energy:log} \\
  \| \varphi_i(\tf) f(NT_0) \|_{L^1_{x,v}} + \int^{NT_0}_{(N-1)T_0}\{
 \| \varphi_i^\prime(\tf) f \|_{L^1_{x,v}}
 +  \frac{3}{4}  |f|_{L^1_{\gamma_+}}
\} 
 \leq  \| \varphi_i(\tf) f((N-1)T_0) \|_{L^1_{x,v}}  + C\| f((N-1)T_0) \|_{L^1_{x,v}}.\label{energy:phii}
\end{align}
From \eqref{maximum}, \eqref{phi'} and \eqref{energy:log}, we derive that, for $i=1,4$, 
\Be \notag
\int^{NT_0}_{(N-1)T_0}
 \| \varphi_i^\prime(\tf) f \|_{L^1_{x,v}}  \geq \int^{NT_0}_{(N-1)T_0}
 \| \varphi_{i-1}(\tf) f (t_*)\|_{L^1_{x,v}} \dd t_* 
\geq T_0 \| \varphi_{i-1}(\tf) f(NT_0) \|_{L^1_{x,v}} 
 - C T_0 \| f((N-1)T_0) \|_{L^1_{x,v}}.
\Ee 
From the above bound and (\ref{energy:phii}), we conclude that, for $i=1,4$,  
 \Be\label{energy:phi1}
 \begin{split}
& \| \varphi_i(\tf) f(NT_0) \|_{L^1_{x,v}} + 
T_0
 \| \varphi_{i-1}(\tf) f  (NT_0)\|_{L^1_{x,v}}
 + \frac{3}{4} \int^{NT_0}_{(N-1)T_0} |f|_{L^1_{\gamma_+}}
 \\
 &
 \leq  \| \varphi_i(\tf) f((N-1)T_0) \|_{L^1_{x,v}}  + C(1+ T_0)\| f((N-1)T_0) \|_{L^1_{x,v}} 
 .
\end{split} \Ee
  
  Now we combine (\ref{L1_coerc}) with \eqref{energy:log}-\eqref{energy:phi1}. From (\ref{L1_coerc}), $\mathbf{1}_{\tf \geq \frac{3T_0}{4}}
  \leq \big( \varphi_{i-1}(\frac{3T_0}{4}) \big)^{-1} \varphi_{i-1}(\tf)$, with $\delta_{\mathfrak{m}, T_0}$ in (\ref{est:m}),
    \Be\label{L1_coerc'}
  \|f(NT_0)\|_{L^1_{x,v}}  \leq  (1-\delta_{\mathfrak{m}, T_0} )   \|f((N-1)T_0)\|_{L^1_{x,v}} 
   +  2\delta_{\mathfrak{m}, T_0}  \Big( \varphi_{i-1}(\frac{3T_0}{4}) \Big)^{-1} \| \varphi_{i-1}(\tf)f((N-1)T_0)\|_{L^1_{x,v}} .
 \Ee   
 
For $i=1,4$, from $(\ref{L1_coerc'})+
  \frac{4 \delta_{\mathfrak{m}, T_0} }{T_0\varphi_{i-1} (\frac{3T_0}{4})}
\big\{
(\ref{energy:log})|_{t=NT_0}+(\ref{energy:phi1})
\big\}
,$ and $T_0>0$ in (\ref{cond:T0}), we deduce (\ref{energyi}).\hide

      \smallskip

      \smallskip
   
   \textbf{Step B.}  As deriving (\ref{energy:log}), we apply Lemma \ref{lemma:energy} and obtain that, for $(N-1)T_0 \leq t_*\leq NT_0$,
\Be\label{energy:phi3}
 \| \varphi_3(\tf) f(NT_0) \|_{L^1_{x,v}} +   \frac{3}{4} \int^{NT_0}_{t_*} |f|_{L^1_{\gamma_+}}  \leq  \| \varphi_3(\tf) f(t_*) \|_{L^1_{x,v}}  + C\| f(t_*) \|_{L^1_{x,v}} .
  \Ee
   
 Recall the estimate (\ref{energy:phii}) with $i=4$. From (\ref{phi'}) and (\ref{energy:phi3}), 
 \Be
 \begin{split}\notag
 &\int^{NT_0}_{(N-1)T_0}
 \| \varphi_4^\prime(\tf) f \|_{L^1_{x,v}}\geq \int^{NT_0}_{(N-1)T_0}
 \| \varphi_3(\tf) f (t_*)\|_{L^1_{x,v}} \dd t_*\\
 &\geq \int^{NT_0}_{(N-1)T_0}
 \Big(\|  \varphi_3 (\tf) f(NT_0) \|_{L^1_{x,v}} 
 - C \| f((N-1)T_0) \|_{L^1_{x,v}}
 \Big) \dd t_*\\
 &=T_0 \| \varphi_3(\tf) f(NT_0) \|_{L^1_{x,v}} 
 - C T_0 \| f((N-1)T_0) \|_{L^1_{x,v}}.
 \end{split}
 \Ee 
 From the above bound and (\ref{energy:phii}) with $i=4$, we conclude that 
    \Be\label{energy:phi4}
 \begin{split}
& \| \varphi_4(\tf) f(NT_0) \|_{L^1_{x,v}} + 
T_0
 \| \varphi_3(\tf) f  (NT_0)\|_{L^1_{x,v}}
 + \frac{3}{4} \int^{NT_0}_{(N-1)T_0} |f|_{L^1_{\gamma_+}}
 \\
 &
 \leq  \| \varphi_4(\tf) f((N-1)T_0) \|_{L^1_{x,v}}  + C(1+ T_0)\| f((N-1)T_0) \|_{L^1_{x,v}} 
 .
\end{split} \Ee

From (\ref{L1_coerc}) and $\mathbf{1}_{\tf \geq \frac{3T_0}{4}} \leq \frac{1}{\varphi_3(\frac{3T_0}{4})} \varphi_3(\tf)$, 
    \Be\label{L1_coerc''}
  \|f(NT_0)\|_{L^1_{x,v}}  \leq  (1-\delta_{\mathfrak{m}, T_0} )   \|f((N-1)T_0)\|_{L^1_{x,v}} 
   + \frac{2\delta_{\mathfrak{m}, T_0} }{\varphi_3(\frac{3T_0}{4})} \|  \varphi_3(\tf)f((N-1)T_0)\|_{L^1_{x,v}} .
 \Ee   
 
 Summing up $(\ref{L1_coerc''})+
  \frac{4 \delta_{\mathfrak{m}, T_0} }{T_0\varphi_3 (\frac{3T_0}{4})}
\big\{
(\ref{energy:phi3})|_{t=NT_0}+(\ref{energy:phi4})
\big\}
,$ and choosing $T_0>0$ as in (\ref{cond:T0}), we deduce (\ref{energyi}) for $i=4$.\unhide\end{proof}
 
 Now we are well equipped to prove Proposition \ref{theorem:1}.
 \begin{proof}[\textbf{Proof of Proposition \ref{theorem:1}}]
Fix $T_0$ in (\ref{cond:T0}) and recall norms of $\vertiii{\cdot}_1$ and $\vertiii{\cdot}_4$ in \eqref{|||i}. From (\ref{energyi}), for $i=1,4$, 
     \Be\label{bound|||}
     \vertiii{f(NT_0)}_i \leq  \vertiii{f((N-1)T_0)}_i  \leq \cdots \leq \vertiii{f(0)}_i, \ \ \text{for all } N \in \mathbb{N}.   
     \Ee 

   \textbf{Step 1.} Since $ { \varphi_1 (\tau )} / {\varphi_4 (\tau)}$ is a decreasing function of $\tau\gg 1$, for $M\gg 1$
\big($M$ will be chosen large enough to satisfy \eqref{1-1/M} and \eqref{def:M}\big) , we have 
   $\varphi_1 (\tf)  = \mathbf{1}_{\tf\geq M}   \varphi_1 (\tf) + \mathbf{1}_{\tf< M}   \varphi_1 (\tf) 
  = \mathbf{1}_{\tf\geq M}\frac{ \varphi_1 (M )}{\varphi_4 (M)}  \varphi_4 (\tf)
   +  \mathbf{1}_{\tf<M} M \varphi_0(\tf).$
From the above bound and (\ref{bound|||}) for $i=4$, for $M\gg1$, $N \in \mathbb{N}$, 
  \Be\label{intp:varphi}
  \begin{split}
  \frac{1}{M}  \| \varphi_1(\tf) f((N-1)T_0) \|_{L^1_{x,v}} 
  & \leq \frac{1}{M}\frac{ \varphi_1 (M )}{\varphi_4 (M)}   \| \varphi_4(\tf) f((N-1)T_0) \|_{L^1_{x,v}}
   +  \| \varphi_0(\tf) f((N-1)T_0) \|_{L^1_{x,v}}\\
  & \leq \frac{1}{M}\frac{ \varphi_1 (M )}{\varphi_4 (M)} 
  \frac{T_0  \varphi_{3}(\frac{3T_0}{4})}{ 4\delta_{\mathfrak{m}, T_0} }
     \vertiii{ f(0) }_4
   +  \| \varphi_0(\tf) f((N-1)T_0) \|_{L^1_{x,v}}.
   \end{split}
  \Ee

  From (\ref{energyi}) and (\ref{intp:varphi}), with $\eqref{energy2}_*:=   \max  \big\{ (1-  \delta_{\mathfrak{m},T_0}  \{
1
 -  \frac{ 4 C(2+ T_0)  }{T_0\varphi_0(\frac{3T_0}{4})} \}
   ) ,
  (  \frac{3}{4}+ \frac{1}{T_0} ) ,
 (1- \frac{1}{M} )
    \big\}$,
     \Be\label{energy2}
   \begin{split} 
    \vertiii{ f(NT_0 )}_1 
   \leq\eqref{energy2}_* \times 
       \vertiii{ f((N-1)T_0 )}_1 +\frac{1}{M}\frac{ \varphi_1 (M )}{\varphi_4 (M)}
        \frac{ \varphi_{3}(\frac{3T_0}{4})  }{ \varphi_0(\frac{3T_0}{4})}  
     \vertiii{ f(0) }_4.
      \end{split}
   \Ee
  
  \hide
   \Be\label{energy2}
   \begin{split}   
    & \| f(NT_0) \|_{L^1_{x,v}}
    +    \frac{ 4\delta_{\mathfrak{m}, T_0} }{ \mathfrak{log} (\frac{3T_0}{4})}
 \| \mathfrak{log} (\tf) f  (NT_0)\|_{L^1_{x,v}}
  \\
    &   +    \frac{ 4\delta_{\mathfrak{m}, T_0} }{T_0\mathfrak{log} (\frac{3T_0}{4})}\| \varphi_1(\tf) f(NT_0) \|_{L^1_{x,v}}
 +   \frac{2 \delta_{\mathfrak{m}, T_0} }{T_0\mathfrak{log} (\frac{3T_0}{4})}\int^{NT_0}_{(N-1)T_0} |f|_{L^1_{\gamma_+}}
 \\
  \leq& \  \Big(1-  \delta_{\mathfrak{m},T_0} \Big\{
1
 -  \frac{ 4 C(2+ T_0)  }{T_0\mathfrak{log} (\frac{3T_0}{4})}\Big\}
  \Big) 
    \|f((N-1)T_0)\|_{L^1_{x,v}}  
    \\
 &
    + \Big(  \frac{3}{4}+ \frac{1}{T_0}\Big) \frac{4\delta_{\mathfrak{m}, T_0} }{\mathfrak{log} (\frac{3T_0}{4})} \|  \mathfrak{log}(\tf)f((N-1)T_0)\|_{L^1_{x,v}}  \\
    &
+   \Big(1- \frac{1}{M}\Big)  \frac{4 \delta_{\mathfrak{m}, T_0} }{T_0\mathfrak{log} (\frac{3T_0}{4})}
 \| \varphi_1(\tf) f((N-1)T_0) \|_{L^1_{x,v}}  
 \\
 &+ \frac{1}{M}\frac{ \varphi_1 (M )}{\varphi_4 (M)}   \frac{4 \delta_{\mathfrak{m}, T_0} }{T_0\mathfrak{log} (\frac{3T_0}{4})} \frac{T_0  \varphi_{i-1}(\frac{3T_0}{4})}{ 4\delta_{\mathfrak{m}, T_0} }
     \vertiii{ f(0) }_4.
   \end{split}
   \Ee
   \unhide

\textbf{Step 2.} Tentatively we make an assumption, which will be justified later behind (\ref{def:M}), 
   \Be\label{1-1/M}
     \Big(1+\frac{1}{M}\Big)^{-1}\geq 
   \max\Big\{
  \Big(1-  \delta_{\mathfrak{m},T_0} \Big\{
1
 -  \frac{ 4 C(2+ T_0)  }{T_0 \varphi_0 (\frac{3T_0}{4})}\Big\}
  \Big) , \Big(\frac{3}{4} + \frac{1}{T_0}\Big),    \Big(1- \frac{1}{M}\Big)
  \Big\}.
   \Ee
For $t\geq 0$, choose $N_* \in \mathbb{N}$ such that $t \in [N_* T_0, (N_* +1)T_0]$. From (\ref{energy2}) and (\ref{1-1/M}), we derive, for all $0 \leq N\leq N_*+1$,
   \Be\label{est|||}
   \vertiii{f(NT_0)}_1\leq  \Big(1 +\frac{1}{M}\Big)^{-1}  \vertiii{f((N-1)T_0)}_1
   +  \mathfrak{R}, \ \ \text{with} \    \mathfrak{R}:= \frac{1}{M}\frac{ \varphi_1 (M )}{\varphi_4 (M)}
        \frac{ \varphi_{3}(\frac{3T_0}{4})  }{ \varphi_0(\frac{3T_0}{4})}   
     \vertiii{ f(0) }_4.
   \Ee 
From \eqref{energy_varphi} and $N_* T_0 \leq t$, then there exists $C>0$ such that,
\Be
\begin{split} \label{first term in est:|||1}
  \| \varphi(\tf) f(t) \|_{L^1_{x,v}}
  \leq     \| \varphi(\tf) f(N_* T_0) \|_{L^1_{x,v}}+
 C \| f(N_* T_0) \|_{L^1_{x,v}}.
\end{split}
\Ee 
Now applying \eqref{first term in est:|||1} first and using \eqref{est|||} successively, we conclude that
\begin{align}
 \vertiii{f(t)}_1 
 &  \lesssim \vertiii{f(N_* T_0)}_1
  \leq  \Big(1+ \frac{1}{M}\Big)^{-1}\vertiii{f((N_*-1)T_0)}_1
 + \mathfrak{R}\notag \\
  & \leq  \Big(1+ \frac{1}{M}\Big)^{-2}\vertiii{f((N_*-2)T_0)}_1
+  \Big(1+ \frac{1}{M}\Big)^{-1} \mathfrak{R}
 +  \mathfrak{R} \leq \cdots  \leq  \Big(1 + \frac{1}{M}\Big)^{-N_*}\vertiii{f(0)}_1
+ (1+M) \mathfrak{R} . \label{est:|||1}
\end{align} 
From
$ \big(1 + \frac{1}{M}\big)^{-N_*}   =  \big( (1 + \frac{1}{M} )^{-M}\big)^{ \frac{N_*}{M} } 
 \leq e^{- \frac{N_*}{2M}}
 \leq e^{- \frac{t}{2T_0M}}$,
$(1+M) \mathfrak{R}   \leq  2\frac{ \varphi_1 (M )}{\varphi_4 (M)}  \frac{ \varphi_{3}(\frac{3T_0}{4})  }{ \varphi_0(\frac{3T_0}{4})}   
     \vertiii{ f(0) }_4$, 
we have 
\Be\label{est:|||1'}
 \vertiii{f(t)}_1 \leq (\ref{est:|||1}) \lesssim 
\max  \big\{
e^{- \frac{t}{2T_0M}},  {  \varphi_1 (M )} / {\varphi_4 (M)}\big\}
\big\{
\vertiii{ f(0) }_1
+ \vertiii{ f(0) }_4
\big\}.
\Ee
Following an optimization trick (making $|e^{- \frac{t}{2T_0M}}-{ \varphi_1 (M )} / {\varphi_4 (M)}|\ll1$ as much as possible), choosing  
 \Be\label{def:M}
   M= t [{2T_0 \ln(10+t)^3}]^{-1}, \text{ so that } \max  \big\{
e^{- \frac{t}{2T_0M}},  {  \varphi_1 (M )} / {\varphi_4 (M)}\big\} 
     \lesssim (\ln\langle t\rangle )^{2- \frac{\delta}{2}} \langle t\rangle^{-3}.
   \Ee
Clearly such a choice assures our precondition (\ref{1-1/M}) for $t\gg1$. On the other hand it is straightforward to check $\vertiii{ f(0) }_1
+  \| \varphi_3(\tf) f_0 \|_{L^1_{x,v}}
\lesssim \| e^{\theta^\prime |v|^2} f_0 \|_{L^\infty_{x,v}}$ from \eqref{COV} and \eqref{jacob:mapV}, while $\| \varphi_4(\tf) f_0 \|_{L^1_{x,v}} <\infty$ has been taken for grated from the postulation of Theorem \ref{theorem}.   
Setting  $\varphi=\varphi_1, t_*=t/2$ and
from \eqref{phi'}, we have
$$\frac{t}{2} \| f(t) \|_{L^1_{x,v}} 
\lesssim \int^t_{\frac{t}{2}} \| \varphi_1^\prime (\tf) f(s) \|_{L^1_{x,v}} \dd s 
$$
Applying \eqref{energy_varphi} and \eqref{maximum}, we get
\Be \notag
\int^t_{\frac{t}{2}} \| \varphi_1^\prime (\tf) f(s) \|_{L^1_{x,v}} \dd s 
\lesssim  \vertiii{  f \Big( \frac{t}{2} \Big) }_1
\Ee
From \eqref{est:|||1'} and \eqref{def:M}, we derive
\Be \notag 
\vertiii{  f \Big( \frac{t}{2} \Big) }_1
\lesssim (\ln\langle t\rangle )^{2- \frac{\delta}{2}} \langle t\rangle^{-3}
 \{ \| e^{\theta^\prime |v|^2} f_0 \|_{L^\infty_{x,v}}+ \| \varphi_4(\tf) f_0 \|_{ {L^1_{x,v}}}
 \}.
\Ee
Therefore, we finally prove \eqref{theorem_1}.
\end{proof}

\section{$L^\infty$-Estimates of Moments}
We give proofs for Lemma \ref{lem:bound1} and Lemma \ref{lem:small_largek}.
\hide
\begin{equation}
\begin{split} \label{expand_h}
    & h (t, x, v) 
    = \mathbf{1}_{t_1 \leq t_{*}}   h (t_{*}, x - (t - t_{*}) v, v)
    \\& + \sum\limits^{k}_{i=2} \mathbf{1}_{t_{i-1} \geq t_{*} > t_i}   w (v) \mu(v) \int_{\mathcal{V}_1} \cdots \int_{\mathcal{V}_{i-1}} \frac{h (t_{*}, x_{i-1} - (t_{i-1} - t_{*}) v_{i-1}, v_{i-1})}{w (v_{i-1})} d \sigma_1 \cdots \frac{d \sigma_{i-1}}{\mu(v_{i-1})}
    \\& + \mathbf{1}_{t_k \geq t_{*} > 0}   w (v) \mu(v) \int_{\mathcal{V}_1} d \sigma_1 \cdots \int_{\mathcal{V}_{k-1}} \frac{h (t_k, x_k, v_{k-1})}{w (v_{k-1})} \frac{d \sigma_{k-1}}{\mu(v_{k-1})}.
\end{split}
\end{equation}
\unhide
\begin{proof}[\textbf{Proof of Lemma \ref{lem:bound1}}]
For (\ref{bound1:expand_h}) it suffices to prove this upper bound for 
\Be\label{forcing}
\int_{\mathcal{V}_1} \cdots \int_{\mathcal{V}_{i-1}} 
\int^{t_i}_0 
\int_{\mathcal{V}_i} 
\mathbf{1}_{t_{i+1} < 0 \leq t_i}
\varrho^\prime(s)
|f (s, x_{i} - (t_{i} - s) v_{i}, v_{i})|  \{n(x_i) \cdot v_i\} \dd v_i \dd s  \dd \sigma_{i-1}  \cdots \dd \sigma_1.
\Ee  

\smallskip 
\textbf{Step 1.} Applying Lemma \ref{lem:mapV}, (\ref{mapV}), (\ref{jacob:mapV}) with $x=x_{j}$ and $v=v_{j}$, we derive the change of variables, for $j=i-1, i-2$,  
\Be\notag
v_{j} \in \mathcal{V}_{j} 
\mapsto
(x_{j+1}, t_{\mathbf{b}, j})
:=
(x_{\mathbf{b}} (x_{j}, v_{j}), t_{\mathbf{b}} (x_{j}, v_{j})) \in \partial \Omega \times 
[0, t_{j}], 
 \ \ \text{with} \ \dd v_{j} =  {|t_{\mathbf{b},j}| ^{-4}}  {|n (x_{j+1}) \cdot (x_{j} - x_{j+1}) |}\dd t_{\mathbf{b},j} \dd S_{x_i}.
\Ee Applying above change of variables twice, we derive that (\ref{forcing}) equals 
\begin{align}
 & \int_{\mathcal{V}_1} \dd \sigma_1 \cdots \int_{\mathcal{V}_{i-3}} \dd \sigma_{i-3}\notag \\
 & \times 
 \int_0^{t_{i-2}} \dd t_{\mathbf{b}, i-2}  \int_{\p\O}  \dd S_{x_{i-1}}
c_\mu \mu \Big( \frac{|x_{i-2}- x_{i-1}|}{|t_{\mathbf{b}, i-2}|}\Big)
\frac{|n(x_{i-1}) \cdot (x_{i-2}- x_{i-1})||n(x_{i-2}) \cdot (x_{i-2}- x_{i-1})|}{|t_{\mathbf{b}, i-2}|^5}\notag\\
& \times 
\int_0^{t_{i-2}- t_{\mathbf{b}, i-2}} \dd t_{\mathbf{b}, i-1}  \int_{\p\O}  \dd S_{x_i}
c_\mu \mu \Big( \frac{|x_{i-1}- x_i|}{|t_{\mathbf{b}, i-1}|}\Big)
\frac{|n(x_{i}) \cdot (x_{i-1}- x_i)||n(x_{i-1}) \cdot (x_{i-1}- x_i)|}{|t_{\mathbf{b}, i-1}|^5}  
\notag\\
&    \times \bigg(
\int^{t_{i-1}  - t_{\mathbf{b}, i-1}}_0 
\int_{\mathcal{V}_i} 
\mathbf{1}_{t_{i-1} - t_{\mathbf{b}, i-1} -\tb(x_i,v_i) < 0  }
\varrho^\prime(s)
|f (s, x_{i}- (t_{i-2} - t_{\mathbf{b},i-2} - t_{\mathbf{b},i-1} - s) v_{i}, v_{i})|  \{n(x_i) \cdot v_i\} \dd v_i \dd s  \bigg),\label{est1:forcing}
\end{align} 
with $t_{i-2}
$, $x_{i-2}
$ defined in \eqref{def:t_k}, and $t_{i-1}= t_{i-2} - t_{\mathbf{b}, i-2}$. Using (\ref{convex:2}), we bound the above integration as  

\Be\label{est2:forcing}
\begin{split}
&\int_{\mathcal{V}_1} \dd \sigma_1 \cdots \int_{\mathcal{V}_{i-3}} \dd \sigma_{i-3}
\int^{t_{i-2}}_0 \dd t_{\mathbf{b}, i-1}
 \int_0^{t_{i-2}- t_{\mathbf{b}, i-1}} \dd t_{\mathbf{b}, i-2} \int_{\p\O} \dd S_{x_i}
\\
&\times
\underbrace{\bigg(\int_{\p\O} 
 c_\mu \mu \Big( \frac{|x_{i-2}- x_{i-1}|}{|t_{\mathbf{b}, i-2}|}\Big)
\frac{| x_{i-2}- x_{i-1}|^4 }{|t_{\mathbf{b}, i-2}|^5}
c_\mu \mu \Big( \frac{|x_{i-1}- x_i|}{|t_{\mathbf{b}, i-1}|}\Big)
\frac{| x_{i-1}- x_i|^4 }{|t_{\mathbf{b}, i-1}|^5}   \dd S_{x_{i-1}} \bigg)}_{(\ref{est2:forcing})_*}
(\ref{est1:forcing}).
\end{split}\Ee 

\textbf{Step 2.} We claim that 
\Be\label{est3:forcing}
(\ref{est2:forcing})_* \lesssim \mathbf{1}_{t_{\mathbf{b}, i-1} \leq t_{\mathbf{b},i-2}}
\langle t_{\mathbf{b},i-2}\rangle^{-5}
+  \mathbf{1}_{t_{\mathbf{b}, i-1} \geq t_{\mathbf{b},i-2}}
\langle t_{\mathbf{b},i-1}\rangle^{-5}.
\Ee
We split the cases: \textit{Case 1: $t_{\mathbf{b}, i-1} \leq t_{\mathbf{b},i-2}$. }  Using $|x_{i-2} - x_{i-1}| \lesssim_\O 1$, we bound 
\begin{align}
c_\mu \mu \Big( \frac{|x_{i-2}- x_{i-1}|}{|t_{\mathbf{b}, i-2}|}\Big)
\frac{| x_{i-2}- x_{i-1}|^4 }{|t_{\mathbf{b}, i-2}|^5}&\lesssim  
\mathbf{1}_{t_{\mathbf{b},i-2} \leq 1} \frac{1}{
|t_{\mathbf{b}, i-2}| 
}+ \mathbf{1}_{t_{\mathbf{b},i-2} \geq 1}
\frac{1}{|t_{\mathbf{b}, i-2}|^5} 
,\label{bound1_1and5}\\
 c_\mu \mu \Big( \frac{|x_{i-1}- x_{i}|}{|t_{\mathbf{b}, i-1}|}\Big)
\frac{| x_{i-1}- x_{i}|^4 }{|t_{\mathbf{b}, i-1}|^5}&\lesssim  
 \mu^{\frac{1}{2}} \Big( \frac{|x_{i-1}- x_{i}|}{|t_{\mathbf{b}, i-1}|}\Big)
 \bigg\{
\mathbf{1}_{t_{\mathbf{b},i-1} \leq 1} \frac{1}{
|t_{\mathbf{b}, i-1}| 
}+ \mathbf{1}_{t_{\mathbf{b},i-1} \geq 1}
\frac{1}{|t_{\mathbf{b}, i-1}|^5} \bigg\}.\label{bound2_1and5}
 \end{align} 

We employ a change of variables, for $x_i \in \p\O$ and $t_{\mathbf{b},i-1}\geq0$, $x_{i-1} \in  \p\O
\mapsto z := \frac{1}{t_{\mathbf{b}, i-1}} (x_{i-1}-x_i) \in \mathfrak{S}_{x_i, t_{\mathbf{b}, i-1}},$ where the image $\mathfrak{S}_{x_i, t_{\mathbf{b}, i-1}}$ of the map is a two dimensional smooth hypersurface. Using the local chart of $\p\O$ we have $
\dd S_{x_{i-1}} \lesssim |t_{\mathbf{b}, i-1}|^2 \dd S_z.$
From this change of variables and (\ref{bound1_1and5}), (\ref{bound2_1and5}), we conclude that 
\Be\begin{split}\label{bound3_1and5}
 &\mathbf{1}_{t_{\mathbf{b}, i-1} \leq t_{\mathbf{b},i-2}}(\ref{est2:forcing})_*  \lesssim   \mathbf{1}_{t_{\mathbf{b}, i-1} \leq t_{\mathbf{b},i-2}}(\ref{bound1_1and5}) 
\int_{ \mathfrak{S}_{x_i, t_{\mathbf{b}, i-1}} } \mu^{\frac{1}{2}}(z)
\Big\{\mathbf{1}_{t_{\mathbf{b},i-1} \leq 1}|t_{\mathbf{b}, i-1}|+ \mathbf{1}_{t_{\mathbf{b},i-1} \geq 1}
\frac{1}{|t_{\mathbf{b}, i-1}|^3}\Big\}
 \dd S_{z} \\
 &\lesssim  \mathbf{1}_{t_{\mathbf{b}, i-1} \leq t_{\mathbf{b},i-2}}
 \Big\{
 \mathbf{1}_{t_{\mathbf{b},i-2} \leq 1} \frac{1}{
|t_{\mathbf{b}, i-2}| 
}+ \mathbf{1}_{t_{\mathbf{b},i-2} \geq 1}
\frac{1}{|t_{\mathbf{b}, i-2}|^5} \Big\}\Big\{\mathbf{1}_{t_{\mathbf{b},i-1} \leq 1}|t_{\mathbf{b}, i-1}|+ \mathbf{1}_{t_{\mathbf{b},i-1} \geq 1}
\frac{1}{|t_{\mathbf{b}, i-1}|^3}\Big\}\\
&\lesssim
 \mathbf{1}_{t_{\mathbf{b}, i-1} \leq t_{\mathbf{b},i-2}}\Big\{
 \mathbf{1}_{t_{\mathbf{b},i-2} \leq 1} \frac{t_{\mathbf{b},i-1}}{t_{\mathbf{b},i-2}}
+ \mathbf{1}_{t_{\mathbf{b},i-2} \geq 1}
\frac{1}{|t_{\mathbf{b}, i-2}|^5}
 \Big\} \lesssim 
 \mathbf{1}_{t_{\mathbf{b},i-2} \leq 1} 
+ \mathbf{1}_{t_{\mathbf{b},i-2} \geq 1}
\frac{1}{|t_{\mathbf{b}, i-2}|^5}.
 \end{split}\Ee

\textit{Case 2: $t_{\mathbf{b}, i-1} \geq t_{\mathbf{b},i-2}$. } We change the role of $i-1$ and $i-2$ and follow the argument of the previous case.  Using $|x_{i-1} - x_{i}| \lesssim_\O 1$, we bound $
c_\mu \mu \Big( \frac{|x_{i-1}- x_{i}|}{|t_{\mathbf{b}, i-1}|}\Big)
\frac{| x_{i-1}- x_{i}|^4 }{|t_{\mathbf{b}, i-1}|^5}\lesssim  
\mathbf{1}_{t_{\mathbf{b},i-1} \leq 1} \frac{1}{
|t_{\mathbf{b}, i-1}| 
}+ \mathbf{1}_{t_{\mathbf{b},i-1} \geq 1}
\frac{1}{|t_{\mathbf{b}, i-1}|^5} 
,$
 and
$
 c_\mu \mu \Big( \frac{|x_{i-2}- x_{i-1}|}{|t_{\mathbf{b}, i-2}|}\Big)
\frac{| x_{i-2}- x_{i-1}|^4 }{|t_{\mathbf{b}, i-2}|^5}$ $\lesssim  
 \mu^{\frac{1}{2}} \Big( \frac{|x_{i-2}- x_{i-1}|}{|t_{\mathbf{b}, i-2}|}\Big)
 \big\{
\mathbf{1}_{t_{\mathbf{b},i-2} \leq 1} \frac{1}{
|t_{\mathbf{b}, i-2}| 
}+ \mathbf{1}_{t_{\mathbf{b},i-2} \geq 1}
\frac{1}{|t_{\mathbf{b}, i-2}|^5} \big\}.$ We employ a change of variables, for $x_{i-1} \in \p\O$ and $t_{\mathbf{b},i-2}\geq0$, $
x_{i-2} \in  \p\O
\mapsto z := \frac{1}{t_{\mathbf{b}, i-2}} (x_{i-2}-x_{i-1}) \in \mathfrak{S}_{x_2, t_{\mathbf{b}, i-2}} ,
$ with $\dd S_{x_{i-2}} \lesssim |t_{\mathbf{b}, i-2}|^2 \dd S_z.$ Then we can conclude that 
 $\mathbf{1}_{t_{\mathbf{b}, i-1} \geq t_{\mathbf{b},i-2}}(\ref{est2:forcing})_*  \lesssim 
 \mathbf{1}_{t_{\mathbf{b},i-1} \leq 1} 
+ \mathbf{1}_{t_{\mathbf{b},i-1} \geq 1}
 {|t_{\mathbf{b}, i-1}|^{-5}}.$
 Clearly this bound and (\ref{bound3_1and5}) imply (\ref{est3:forcing}).

 \smallskip
 
 \textbf{Step 3.} Now we use (\ref{est3:forcing}) to (\ref{est2:forcing}). Then we have 
 \begin{align}
 (\ref{forcing}) \lesssim&\int_{\mathcal{V}_1} \dd \sigma_1 \cdots \int_{\mathcal{V}_{i-3}} \dd \sigma_{i-3}
\int^{t_{i-2}}_0 \dd t_{\mathbf{b}, i-1}\langle t_{\mathbf{b}, i-1} \rangle^{-5} 
 \int_0^{\min\{t_{i-2}- t_{\mathbf{b}, i-1},  t_{\mathbf{b}, i-1}  \}} \dd t_{\mathbf{b}, i-2} \int_{\p\O} \dd S_{x_i} (\ref{est1:forcing})
 \label{est_a:forcing}
 \\
 &+ \int_{\mathcal{V}_1} \dd \sigma_1 \cdots \int_{\mathcal{V}_{i-3}} \dd \sigma_{i-3}
\int^{t_{i-2}}_0 \dd t_{\mathbf{b}, i-2} \langle  t_{\mathbf{b}, i-2}\rangle^{-5}
 \int_0^{ \max\{t_{i-2}- t_{\mathbf{b}, i-2},t_{\mathbf{b}, i-2}  \}} \dd t_{\mathbf{b}, i-1} \int_{\p\O} \dd S_{x_i} (\ref{est1:forcing}). \label{est_b:forcing}
\end{align}  

We first consider (\ref{est_a:forcing}). We employ the following change of variables 
$
(x_i, t_{\mathbf{b}, i-2}  ) 
\mapsto y= x_i - (t_{i-2} -t_{\mathbf{b}, i-2} - t_{\mathbf{b}, i-1}-s) v_i
 \in \Omega,
$ with $
|n(x_i) \cdot v_i| \dd S_{x_i} \dd t_{\mathbf{b}, i-2}   =  \dd y$. Applying this change of variables we derive that 
\Be
\begin{split}
\eqref{est_a:forcing}
&\leq\int_{\mathcal{V}_1} \dd \sigma_1 \cdots \int_{\mathcal{V}_{i-3}} \dd \sigma_{i-3}
\int^{t_{i-2}}_0 \dd t_{\mathbf{b}, i-1}\langle t_{\mathbf{b}, i-1} \rangle^{-5}  \int^{t}_0
 \iint_{\O \times\R^3 } 
  \varrho^\prime(s) |f(s,y,v_i)|  \dd v_i\dd y \dd s \lesssim \int^t_0   \|\varrho^\prime f(s)\|_{L^1_{x,v}} \dd s.\notag
\end{split}
\Ee
A bound of (\ref{est_b:forcing}) can be derived exactly as the one for \eqref{est_a:forcing}, using the change of variables $(x_i, t_{\mathbf{b}, i-1}  ) 
\mapsto y= x_i - (t_{i-2} -t_{\mathbf{b}, i-2} - t_{\mathbf{b}, i-1}-s) v_i
 \in \Omega$ with $
|n(x_i) \cdot v_i| \dd S_{x_i} \dd t_{\mathbf{b}, i-1}   =  \dd y$.
\end{proof}

\begin{proof}[\textbf{Proof of Lemma \ref{lem:small_largek}}]\textbf{Step 1.} Define $\mathcal{V}^{\delta}_i := \{ v_i \in \mathcal{V}_i:  {| n(x_{i}) \cdot v_i |} / {|v_i|^2} < \delta \}.$ From (\ref{estimate on delta}), we have $\int_{\mathcal{V}^{\delta}_j} \dd \sigma_j \leq C \delta^2$. On the other hand, from \eqref{convex:2}, we have $
    t_{\mathbf{b}} (x_i, v_i) \geq C_{\Omega} {| n(x_{i}) \cdot v_i |} /{|v_i|^2}$. Therefore if $v_i \in \mathcal{V}_i \backslash \mathcal{V}^{\delta}_i$, we have $t_{\mathbf{b}} (x_i, v_i) \geq C_{\Omega} \delta$.

 If $t_{k}(t,x,v,v_1,\cdots, v_{k-1}) \geq 0$, we conclude such $v_i \in \mathcal{V}_i \backslash \mathcal{V}^{\delta}_i$ can exist at most $[\frac{t  }{C_{\Omega} \delta}] + 1$ times. Denote the combination $\begin{pmatrix}
M \\ N
\end{pmatrix}= \frac{M(M-1) \cdots (M-N+1)}{N(N-1) \cdots 1}= \frac{M!}{N! (M-N)!}$ for $M,N \in \mathbb{N}$ and $M\geq N$. For $0<\delta \ll 1$,  we have
\begin{equation}
\begin{split} \label{sum_bound}
  \int_{\prod_{j=1}^{k-1} \mathcal{V}_j}   \mathbf{1}_{t_{k} (t,x,v,v_1,\cdots, v_{k-1}) \geq 0} \dd \sigma_{k-1} \cdots \dd \sigma_1
  \leq \sum\limits^{[\frac{t  }{C_{\Omega} \delta}] + 1}_{m=0} 
\begin{pmatrix}
k 
 \\
m
\end{pmatrix} 
\big( \int_{\mathcal{V}_i^\delta} \dd \sigma_i \big)^{k-m
}
  \leq (C\delta^2)^{k - [\frac{t  }{C_{\Omega} \delta}]}
  \underbrace{ \sum\limits^{[\frac{t  }{C_{\Omega} \delta}] + 1}_{m=0} 
\begin{pmatrix}
k
 \\
m
\end{pmatrix}}_{\eqref{sum_bound}_*}
.
\end{split}
\end{equation}

\textbf{Step 2.} 
Recall the Stirling's formula $\sqrt{2 \pi} k^{k+\frac{1}{2}} e^{-k} \leq k ! \leq k^{k+\frac{1}{2}} e^{-k+1}$ (e.g. \cite{Billingsley}).
Using this bound and $(1 + \frac{1}{\mathfrak{a}-1})^{\mathfrak{a}-1} \leq e$, we have, for $\mathfrak{a}\geq 2$
 $\begin{pmatrix}
k \\
\frac{k}{\mathfrak{a}}
\end{pmatrix} 
  = \frac{k !}{ (k - \frac{k}{\mathfrak{a}}) ! \frac{k}{ \mathfrak{a}} !} 
\leq 
 (\frac{\mathfrak{a}}{\mathfrak{a}-1})^{\frac{\mathfrak{a}}{\mathfrak{a}-1} k} \mathfrak{a}^{\frac{k}{\mathfrak{a}}} \sqrt{\frac{\mathfrak{a}^2}{k (\mathfrak{a}-1)}}  = 
 \frac{1}{\sqrt{k}} \bigg( \mathfrak{a}^{\frac{1}{\mathfrak{a}}} \big( \frac{\mathfrak{a}}{\mathfrak{a}-1} \big)^{\frac{\mathfrak{a}}{\mathfrak{a}-1}} \bigg)^k \sqrt{\frac{ \mathfrak{a}^2}{\mathfrak{a}-1}}
  \leq
  \frac{1}{\sqrt{k}} (e \mathfrak{a})^{\frac{k}{\mathfrak{a}}} \sqrt{\frac{\mathfrak{a}^2}{ \mathfrak{a}-1}}.$
Hence, 
\begin{equation}\label{est:sum_com}
\begin{split}
\sum\limits^{[\frac{k}{\mathfrak{a}}]}_{i=1} 
\begin{pmatrix}
k \\
i
\end{pmatrix} \leq 
\frac{k}{\mathfrak{a}} 
\begin{pmatrix}
k \\
\frac{k}{\mathfrak{a}}
\end{pmatrix} \leq \frac{e}{2 \pi} \sqrt{\frac{k}{\mathfrak{a}}} (e \mathfrak{a})^{\frac{k}{\mathfrak{a}}}.
\end{split}
\end{equation}

\smallskip
\textbf{Step 3.} Now we estimate $\eqref{sum_bound}_*$. For fix $0<\delta\ll 1$ which is independent of $t$, choose  
\Be\label{choice:k}
\mathfrak{a} \in \mathbb{N} \ \text{ such that }
(\delta^{2 \mathfrak{a}} e \mathfrak{a})^{\frac{1}{C_\O \delta}} \leq e^{-2}, \ \text{ and set } 
k :=  \mathfrak{a} \Big(\big[\frac{t  }{C_{\Omega} \delta}\big] + 1\Big).
\Ee 
Using (\ref{est:sum_com}), we derive $\eqref{sum_bound}_* \lesssim   \sqrt{\big[\frac{t  }{C_{\Omega} \delta}\big] + 1} \Big( e \frac{k}{[\frac{t  }{C_{\Omega} \delta}] + 1} \Big)^{[\frac{t  }{C_{\Omega} \delta}] + 1} 
\lesssim \sqrt{\big[\frac{t  }{C_{\Omega} \delta}\big] + 1} (e \mathfrak{a})^{[\frac{t  }{C_{\Omega} \delta}] +1 }$ and hence \eqref{sum_bound} is bounded by 
$ (\delta^{2\mathfrak{a}} e \mathfrak{a} )^{[\frac{t  }{C_{\Omega} \delta}] + 1} \sqrt{\big[\frac{t  }{C_{\Omega} \delta}\big] + 1} 
 \lesssim e^{-t}.$ This completes the proof. 
\end{proof}

Equipped with Proposition \ref{theorem:1}, and Lemma \ref{lem:bound1}-\ref{lem:small_largek},
we present a proof of the main theorem:
\begin{proof}[\textbf{Proof of Theorem \ref{theorem}}]
Let $w (v) := e^{\theta |v|^2}, w^\prime (v) := e^{\theta^\prime |v|^2}$ for $0< \theta < \theta^\prime< 1/2.$ It is standard
(\cite{Guo10}) to construct a unique solution of $f$ to (\ref{eqtn_f})-(\ref{diff_f}) and prove its bound $\| w^\prime f(t) \|_{L^\infty_{x,v}}\lesssim \| w^\prime f(0) \|_{L^\infty_{x,v}}$. 
 To utilize the $L^1$-decay of \eqref{theorem_1}, we set 
\Be \label{varrho}
\varrho(t):=  (\ln\langle t\rangle)^{-2}
 \langle t\rangle^{5}.
\Ee
Clearly we have $\varrho^\prime(t)\lesssim  (\ln\langle t\rangle)^{-2}
 \langle t\rangle^{4}$ for $t\gg1$.
 
From Lemma \ref{sto_cycle}, we derive the form of $\int_{\R^3} w(v) |f| \dd v$ (i.e. $\varrho=1$). First we split $t_1 \leq 3t/4$ case and get \eqref{first team in f_exp}. Next, for $t_1 \geq 3t/4$ case, we follow along the stochastic cycles twice with $k=2$ and $t_* = t/2$ and get \eqref{second team in f_exp}, \eqref{f_exp2}.
 \begin{align}
 \int_{\R^3} w(v) |f(t,x,v)| \dd v 
 & \leq \int_{\R^3}\mathbf{1}_{t_1 \leq 3t/4}  w(v) |f(3t/4, x- (t-3t/4) v,v)| \dd v
 \label{first team in f_exp} \\
& + \int_{\R^3}  \mathbf{1}_{t_1 \geq 3t/4} c_\mu w(v) \mu(v)  
   \int_{\prod^2_{j=1} \mathcal{V}_j} \mathbf{1}_{t_2 < t/2 < t_1} w (v_1) |f(t/2, x_1 - (t_1 -t/2) v_1, v_1)| \dd \Sigma_{1}^2
  \dd v \label{second team in f_exp} \\
& + \int_{\R^3} \mathbf{1}_{t_1 \geq 3t/4}  
  c_\mu w(v) \mu(v)  
  \Big| \int_{\prod^2_{j=1} \mathcal{V}_j} \mathbf{1}_{t_2 \geq t/2} w (v_2)
f(t_2,x_2,v_2) \dd \Sigma_{2}^2 \Big|
  \dd v.\label{f_exp2}
 \end{align} 
where $\dd  {\Sigma}^{2}_{1} = \dd \sigma_{2}  \frac{ \dd \sigma_{1}}{c_\mu \mu(v_1)w(v_1)} $ and $\dd  {\Sigma}^{2}_{2} = \frac{ \dd \sigma_{2}}{c_\mu \mu(v_2)w(v_2)} \dd \sigma_{1}$,
 with the probability measure $\dd \sigma_j = c_\mu\mu(v_j) \{ n(x_j) \cdot v_j \} \dd v_j$ on $\mathcal{V}_j$ for $j =1, 2$.

For \eqref{first team in f_exp},
considering the change of variables $v \mapsto y=x- (t-3t/4)v \in \O$ where we use $t-\tb(x,v) = t_1 \leq 3t/4$, thus we have $\dd v \lesssim t^{-3} \dd y$. 
Then, from the $L^\infty$-boundedness, $t \gg 1$, $|\O| \lesssim 1$ and $0 < w < w^\prime$,  
we deduce that 
\Be \label{first part in wf}
\eqref{first team in f_exp} 
\lesssim \int_{\O}  w (v) |f(3t/4, y, v)| \langle t\rangle^{-3} \dd y
\lesssim \langle t\rangle^{-3} \| w f(0) \|_{L^\infty_{x,v}} 
\leq \langle t\rangle^{-3} \| w^\prime f(0) \|_{L^\infty_{x,v}}.
\Ee

For \eqref{second team in f_exp}, since $\int_{\R^3} w(v) \mu(v) \dd v \lesssim 1$ and $\dd \sigma_2$ is the probability measure in $\dd {\Sigma}^{2}_{1}$, we have 
\Be \label{first step in second part in wf}
\eqref{second team in f_exp} \lesssim \int_{\mathcal{V}_1} \mathbf{1}_{t_2 < t/2 < t_1} |f(t/2, x_1 - (t_1 -t/2) v_1, v_1)| \{ n(x_1) \cdot v_1 \} \dd v_1
\Ee
Note that $t_1 \geq 3t/4$ implies $ t/4 \leq t_1 -t/2 \leq t$. Considering the change of variables $v_1 \mapsto y =x- (t_1 - t/2)v_1 \in \O$ where we use $t_1 - \tb(x_1,v_1) =t_2 \leq t/2$, clearly we get $\dd v_1 \lesssim t^{-3} \dd y$.
Again, from the $L^\infty$-boundedness, $t \gg 1$, $|\O| \lesssim 1$ and $0 < n(x_1) \cdot v_1 \lesssim w (v_1) < w^\prime (v_1)$,
we derive
\Be \label{second part in wf}
\eqref{first step in second part in wf} 
\lesssim \int_{\O}  w (v_1) |f(t/2, y, v_1)| \langle t \rangle^{-3} \dd y \lesssim \langle t\rangle^{-3} \| w f(0) \|_{L^\infty_{x,v}}
\leq \langle t\rangle^{-3} \| w^\prime f(0) \|_{L^\infty_{x,v}}.
\Ee

Now we only need to bound \eqref{f_exp2}. Since $\int_{\R^3} w(v) \mu(v) \dd v \lesssim 1$ and $\dd \sigma_1$ is the probability measure in $\dd {\Sigma}^{2}_{2}$, it suffices to prove the decay of 
\Be \label{decay of f_exp2}
\sup_{v \in \R^3, v_1 \in \mathcal{V}_1}
\Big| \int_{\mathcal{V}_2} 
\mathbf{1}_{t_2 \geq t/2} f(t_2,x_2,v_2) \{ n(x_{2}) \cdot v_{2} \} \dd v_{2} \Big|. 
\Ee 
Now we define $g := \rho (t_2) w (v_2) f (t_2,x_2,v_2)$, and note that
\Be \notag
\frac{1}{\rho (t_2)} \int_{\mathcal{V}_2} \frac{|n(x_{2}) \cdot v_{2}|}{w (v_2)} g (t_2, x_2, v_2) \dd v_2 = \int_{\mathcal{V}_2} f(t_2,x_2,v_2) \{ n(x_{2}) \cdot v_{2} \} \dd v_{2}.
\Ee 
Therefore, it suffices to show the decay of $\big| \frac{1}{\rho (t_2)} \int_{\mathcal{V}_2} \mathbf{1}_{t_2 \geq t/2} \frac{|n(x_{2}) \cdot v_{2}|}{w (v_2)} g (t_2, x_2, v_2) \dd v_2 \big|$.

Applying Lemma \ref{sto_cycle} with $w(v) = e^{\theta |v|^2}$, $\varrho(t)$ in \eqref{varrho}, and choosing $t_*=0$, $k \geq \mathfrak{C}t$ with $k \sim t$ as in Lemma \ref{lem:small_largek}, we obtain the corresponding expansion of $g (t_2,x_2,v_2) = \rho (t_2) w (v_2) f (t_2,x_2,v_2)$ as \eqref{expand_g1}-\eqref{expand_g3}: for $3 \leq i \leq k$,
\begin{align}
    g (t_2, x_2, v_2) 
   & = \mathbf{1}_{t_3 \leq 0} \
    g (0, x_2 - t_2 v_2, v_2)
\label{expand_g1}
    \\& + \int^{t_2}_{\max(0, t_{3})} \varrho^\prime(s) w(v_2) f(s, x_2 -(t_2-s) v_2, v_2) \dd s
\label{expand_g21}
    \\& +  c_\mu w \mu (v_2)  \int_{\prod_{j=3}^{k} \mathcal{V}_j}   
     \sum\limits^{k-1}_{i=3} 
     \Big\{ \mathbf{1}_{t_{i+1} < 0 \leq t_{i}}  g (0, x_{i} - t_{i} v_{i}, v_{i})  \Big\}
      \dd \tilde{\Sigma}^{k}_{i}
\label{expand_g22}
    \\& + c_\mu w \mu (v_2)  \int_{\prod_{j=3}^{k} \mathcal{V}_j}   
     \sum\limits^{k-1}_{i=3} 
     \Big\{ \mathbf{1}_{0 \leq t_{i}}
     \int^{t_{i}}_{ \max(0, t_{i+1})}w (v_{i}) \varrho^\prime(s) f(s, x_{i} -(t_{i}-s) v_{i}, v_{i}) \dd s
         \Big\}
      \dd \tilde{\Sigma}^{k}_{i} 
\label{expand_g2}
    \\& +  c_\mu w \mu(v_2)  \int_{\prod_{j=3}^{k } \mathcal{V}_j}   
    \mathbf{1}_{t_{k} \geq 0} \
    g (t_{k}, x_{k}, v_{k})
     \dd \tilde{\Sigma}^{k }_{k}, \label{expand_g3} 
\end{align} 
where 
$\dd \tilde{\Sigma}^{k}_{i} :=
\dd \sigma_{k} \cdots \dd \sigma_{i+1} \frac{ \dd \sigma_{i}}{c_\mu \mu(v_{i}) w(v_{i})} \dd \sigma_{i-1} \cdots \dd \sigma_3$. Here, we regard $t_2, x_2, v_2$ as free parameters. 

We will estimate the contribution of \eqref{expand_g1}-\eqref{expand_g3} in 
$\frac{1}{\rho (t_2)} \int_{\mathcal{V}_2} \frac{ n(x_{2}) \cdot v_{2} }{w (v_2)} g (t_2, x_2, v_2) \dd v_2$
term by term. 

For the contribution of \eqref{expand_g1},
we note $t \geq t_2 \geq t/2$ and
consider the change of variables $v_2 \mapsto y = x_2 - t_2 v_2 \in \O$ where we use $t_3 \leq 0$, clearly we have $\dd v_2 \lesssim t^{-3} \dd y$. From the $L^\infty$-boundedness, $t_2 \geq t/2 \gg 1$, $|\O| \lesssim 1$ and $0 < n(x_{2}) \cdot v_{2} \lesssim w (v_2) < w^\prime (v_2)$,  
we deduce that 
\Be \label{est:expand_h1}
\begin{split}
\frac{1}{\rho (t_2)} \int_{\mathcal{V}_2} \frac{|n(x_{2}) \cdot v_{2}|}{w (v_2)} |\eqref{expand_g1}| \dd v_2 
& \lesssim \frac{1}{\rho (t_2)} \int_{\O} \varrho(0) w(v_2) |f (0, y, v_2)| \langle t\rangle^{-3} \dd y  
\\& \lesssim \frac{1}{\rho (t)} \langle t\rangle^{-3} \varrho(0) \| w f(0) \|_{L^\infty_{x,v}}
\lesssim  \frac{1}{\rho (t)} \langle t\rangle^{-3} \| w^\prime f(0) \|_{L^\infty_{x,v}}.
\end{split}
\Ee

Now we bound the contribution of \eqref{expand_g21}. Recall Lemma \ref{lem:bound1} and Proposition \ref{theorem:1} with $\varrho^\prime(t)\lesssim  (\ln\langle t\rangle)^{-2}
 \langle t\rangle^{4}$, we have 
\Be \label{est2:expand_h1}
\begin{split}
\frac{1}{\rho (t_2)} \int_{\mathcal{V}_2} \frac{|n(x_{2}) \cdot v_{2}|}{w (v_2)} |\eqref{expand_g21} | \dd v_2  
& \lesssim \frac{1}{\rho (t)} \int^t_0 \| \rho^\prime (s) f(s) \|_{L^1_{x,v}} \dd s    
\lesssim \frac{1}{\rho (t)} \int^t_0 \|  (\ln\langle s\rangle)^{-2}
 \langle s \rangle^{4} f(s) \|_{L^1_{x,v}} \dd s
\\& \lesssim \frac{t}{\rho (t)} \times \{ \| w^\prime f (0) \|_{L^\infty_{x,v}}+ \| \varphi_4(\tf) f (0) \|_{L^1_{x,v}}
 \}. 
\end{split}\Ee

Next, we bound the contribution of \eqref{expand_g22}. From the $L^\infty$-boundedness and
$0 < n(x_{2}) \cdot v_{2} \lesssim w (v_2) < w^\prime (v_2) < \mu^{-1} (v_2)$, we derive 
\Be\label{est1:expand_h1}
\begin{split}
\frac{1}{\rho (t_2)} \int_{\R^3} \frac{|n(x_{2}) \cdot v_{2}|}{w (v_2)} |\eqref{expand_g22}| \dd v_2
& \lesssim \frac{k}{\rho (t)}  \bigg( \sup_{i }    \int_{\prod_{j=1}^{k} \mathcal{V}_j}   
     \mathbf{1}_{t_{i+1}<0 \leq t_{i }} 
      \dd \tilde{\Sigma}^{k}_{i}\bigg)
      \varrho(0)
   \| w f(0) \|_{L^\infty_{x,v}}   
\\&  \lesssim  \frac{k}{\rho (t)}  \bigg(   \int_{n(x_{j}) \cdot v_{j} >0}
        \frac{ |n(x_{j}) \cdot v_{j}| }{w (v_{j})}\dd v_j \bigg)   \| w f(0) \|_{L^\infty_{x,v}}
\lesssim  \frac{k}{\rho (t)}  \| w^\prime f(0) \|_{L^\infty_{x,v}}.
\end{split}
\Ee

Again recall Lemma \ref{lem:bound1} and Proposition \ref{theorem:1}, we bound the contribution of \eqref{expand_g2}. From  $0 < n(x_{2}) \cdot v_{2} \lesssim \mu^{-1} (v_2)$ and $\varrho^\prime(t)\lesssim  (\ln\langle t\rangle)^{-2}
 \langle t\rangle^{4}$, we have
\Be\label{est2:expand_h1}
\begin{split}
&\frac{1}{\rho (t_2)} \int_{\R^3} \frac{|n(x_{2}) \cdot v_{2}|}{w (v_2)} |\eqref{expand_g2}| \dd v_2   \\
& \lesssim \frac{k}{\rho (t)} \times  \sup_{i }  \int_{\prod_{j=3}^{k} \mathcal{V}_j}  \mathbf{1}_{0 \leq t_{i}}
\int^{t_{i}}_{ \max(0, t_{i+1})}w (v_{i}) \varrho^\prime(s) f(s, x_{i} -(t_{i}-s) v_{i}, v_{i}) \dd s \dd \tilde{\Sigma}^{k}_{i}    
\\& \lesssim \frac{1}{\rho (t)} \int^t_0 \| \rho^\prime (s) f(s) \|_{L^1_{x,v}} \dd s     \lesssim \frac{k}{\rho (t)} \int^t_0 \|  (\ln\langle s\rangle)^{-2}
 \langle s \rangle^{4} f(s) \|_{L^1_{x,v}} \dd s
\\& \lesssim \frac{k t}{\rho (t)} \times \{ \| w^\prime f (0) \|_{L^\infty_{x,v}}+ \| \varphi_4(\tf) f (0) \|_{L^1_{x,v}}
 \}. 
\end{split}\Ee

Lastly we bound the contribution of \eqref{expand_g3}. From Lemma \ref{lem:small_largek} and $0 < n(x_{2}) \cdot v_{2} \lesssim w (v_2) < w^\prime (v_2)$, we get
\Be \label{est3:expand_h1}
\begin{split}
&\frac{1}{\rho (t_2)} \int_{\R^3} \frac{|n(x_{2}) \cdot v_{2}|}{w (v_2)} |\eqref{expand_g3}| \dd v_2\\
& \lesssim \frac{1}{\rho (t)} \sup_{(x,v) \in \bar{\O} \times \R^3}  \Big(\int_{\prod_{j=1}^{k -1} \mathcal{V}_j}   
    \mathbf{1}_{t_{k }(t,x,v,v_1,\cdots, v_{k-1}) \geq 0 }
\dd \sigma_1 \cdots \dd \sigma_{k-1}\Big)
\sup_{t_k \geq 0} \| w f(t_k) \|_{L^\infty_{x,v}}
\\&  \lesssim \frac{1}{\rho (t)} e^{-t} \| w f(0) \|_{L^\infty_{x,v}}
 \lesssim e^{-t} \| w^\prime f(0) \|_{L^\infty_{x,v}}.
\end{split}
\Ee

Collecting estimates from \eqref{est:expand_h1}-\eqref{est3:expand_h1} and using $k \sim t$, we derive 
\Be \label{g estimate}
\big| \frac{1}{\rho (t_2)} \int_{\mathcal{V}_2} \mathbf{1}_{t_2 \geq t/2} \frac{|n(x_{2}) \cdot v_{2}|}{w (v_2)} g (t_2, x_2, v_2) \dd v_2 \big|
\leq 
\max\{  \frac{1}{\rho (t)} \langle t\rangle^{-3}, \frac{(k+1) t}{\rho (t)}  , e^{-t}
\}   
 \lesssim \frac{\langle t\rangle^2}{\rho (t)}.
\Ee

For \eqref{f_exp2}, using $\varrho(t)= (\ln\langle t\rangle)^{-2}
 \langle t\rangle^{5}$, $0 < w(v) < \mu^{-1} (v)$ and \eqref{g estimate}, we conclude
\Be\notag
\eqref{f_exp2} \lesssim \langle t\rangle^{-3} (\ln\langle t\rangle)^{2}.
\Ee

The above bound, together with \eqref{first part in wf} and \eqref{second part in wf}, proves \eqref{theorem_infty}.
\end{proof}

\section{Acknowledgements}
The authors thank Bertrand Lods sincerely for inspiring discussions and his suggestion to read papers \cite{Bernou,Lods}. They also thank Yan Guo for his interest. This paper is a part of JJ's thesis. This project is partly supported by National Science Foundation under Grant No. 1900923 and the Wisconsin Alumni Research Foundation.

\end{document}